\documentclass[a4paper,12pt,onecolumn]{article}


\usepackage[vmargin=2cm,hmargin=2cm,headheight=14.5pt,top=2cm,headsep=.5cm]{geometry}
\usepackage[utf8]{inputenc}
\usepackage{bm}
\usepackage{empheq}
\usepackage{stackrel}
\usepackage{cases}
\usepackage{mathtools,frame}
\usepackage{amsthm,amsmath,amscd}
\usepackage{makeidx,stmaryrd,stackengine}\stackMath
\usepackage[charter]{mathdesign}
\usepackage{tikz-cd}
\usepackage{caption}
\usepackage{xcolor}
\captionsetup[figure]{format=plain,
justification=justified,font={it,small},
textfont={},margin=30pt,indention=0pt,
parindent=0pt,hangindent=0pt,singlelinecheck=true,labelfont={bf},labelsep=period,name={Fig.}}
\tikzcdset{every label/.append style = {font = \small}}

\usepackage{graphicx}
\DeclareMathSizes{12}{12}{8}{6}

\usepackage{cite}
\usepackage{url}
\usepackage[ddmmyy]{datetime}

\usepackage{marvosym}


\newtheoremstyle{ptheorem}{1em}{0em}{\itshape}{}{\bfseries}{.}{.5em}{\thmname{#1}\thmnumber{ #2}\thmnote{ (\hspace{-.01pt}{#3})}}

\theoremstyle{ptheorem}

\newtheorem{thm}{Theorem}[section]
\newtheorem{pro}[thm]{Proposition}
\newtheorem{lem}[thm]{Lemma}
\newtheorem{cor}[thm]{Corollary}

\newtheoremstyle{hdef}{1em}{0em}{}{}{\bfseries}{.}{.5em}{\thmname{#1}\thmnumber{ #2}\thmnote{ (\hspace{-.01pt}{#3})}}
\theoremstyle{hdef}

\newtheorem{dfn}[thm]{Definition}
\newtheorem{rem}[thm]{Remark}
\newtheorem{exa}[thm]{Example}

\newtheoremstyle{obs}{1em}{0em}{\color{blue}}{}{\bfseries}{.}{.5em}{\thmname{#1}\thmnumber{ #2}\thmnote{ (\hspace{-.01pt}{#3})}}
\theoremstyle{obs}

\makeatletter
\newtheoremstyle{premark}{1em}{0em}{
\addtolength{\@totalleftmargin}{1.5em}
\addtolength{\linewidth}{-1.5em}
\parshape 1 1.5em \linewidth}{}{\scshape}{.}{.5em}{}
\makeatother

\theoremstyle{premark}

\numberwithin{equation}{section}
\numberwithin{figure}{section}



\DeclareMathOperator{\Id}{Id}

\DeclareMathOperator{\dif}{d}



\newcommand{\cB}{{\mathcal B}}
\newcommand{\cC}{{\mathcal C}}

\newcommand{\cE}{{\mathcal E}}

\newcommand{\cM}{{\mathcal M}}

\newcommand{\bN}{{\mathbb N}}

\newcommand{\bQ}{{\mathbb Q}}
\newcommand{\bR}{{\mathbb R}}
\newcommand{\bS}{{\mathbb S}}

\renewcommand{\a}{\alpha}
\renewcommand{\b}{\beta}
\renewcommand{\c}{\gamma}

\newcommand{\e}{\varepsilon}

\renewcommand{\phi}{\varphi}


\newcommand{\ol}{\overline}
\newcommand{\fa}{\forall}

\newcommand{\n}{{n\in\bN}}

\newcommand{\Ra}{\Rightarrow}

\newcommand{\nkp}{\enskip}

\newcommand{\sfa}{\nkp\fa}
\renewcommand{\d}{\delta}
\newcommand{\D}{\Delta}

\renewcommand{\(}{\left(}
\renewcommand{\)}{\right)}

\newcommand{\lil}{\lim\limits}

\newcommand{\lili}[1]{\stackrel[{#1}]{}{\underline\lim}}
\newcommand{\til}{\widetilde}

\AtBeginDocument{
	
}

%

\parskip=.5em

\allowdisplaybreaks

\title{Displacement Calculus\footnote{The authors were partially supported by Ministerio de Econom\'ia y Competitividad, Spain, and FEDER, project MTM2013-43014-P, and by the Agencia Estatal de Investigaci\'on (AEI) of Spain under grant MTM2016-75140-P, co-financed by the European Community fund FEDER.}}

\author{
Ignacio Márquez Albés\footnote{Partially supported by Xunta de Galicia, grant ED481A-2017/095.}\\
\normalsize e-mail: ignacio.marquez@usc.es\\
F. Adri\'an F. Tojo \\
\normalsize e-mail: fernandoadrian.fernandez@usc.es\\
\normalsize \emph{Instituto de Ma\-te\-m\'a\-ti\-cas, Facultade de Matem\'aticas,} \\ \normalsize\emph{Universidade de Santiago de Com\-pos\-te\-la, Spain.}\\ 
}
\date{}

\allowdisplaybreaks
\begin{document}
 \maketitle




\begin{abstract}

In this work we establish a theory of Calculus based on the new concept of \emph{displacement}. We develop all the concepts and results necessary to go from the definition to differential equations, starting with topology and measure and moving on to differentiation and integration. We find interesting notions on the way, such as the integral with respect to a path of measures or the displacement derivative. We relate both of these two concepts by a Fundamental Theorem of Calculus. Finally, we develop the necessary framework in order to study displacement equations by relating them to Stieltjes differential equations.
\end{abstract}

\medbreak

\noindent \textit{2010 MSC:} 28A, 34A, 54E

\medbreak

\noindent \textit{Keywords and phrases:} Displacement; Ordinary differential equation; Fundamental Theorem of Calculus; Stieltjes differentiation.

\section{Introduction}

Derivatives are, in the classical sense of Newton \cite{Newton}, infinitesimal rates of change of one (dependent) variable with respect to another (independent) variable. Formally, the derivative of $f$ with respect to $x$ is
\[f'(x):=\lim_{\Delta x\to 0}\frac{\Delta f}{\Delta x}.\]
 The symbol $\Delta$ represents what we call the \emph{variation}, that is, the change of magnitude underwent by a given variable\footnote{In the \emph{calculus of variations} the study variable is a function $f$ and the variation of $f$ is noted by $\delta f$.}. This variation is, in the classical setting, defined in the most simple possible way as $\Delta x=\til x-x$, where $x$ is the point at which we want to compute the derivative (the point of departure) and $\til x$ another point which we assume close enough to $x$. From this, it follows naturally that the variation of the dependent variable has to be expressed as $\Delta f=f(\til x)-f(x)$. This way, when $\til x$ tends to $x$, that is, when $\Delta x$ tends to zero, we have
\[f'(x):=\lim_{\til x\to x}\frac{f(\til x)-f(x)}{\til x-x}.\]
Of course, this naïve way of defining the variation is by no means the unique way of giving meaning to such expression. The intuitive idea of variation is naturally linked to the mathematical concept of distance. After all, in order to measure how much a quantity has varied it is enough to see \emph{how far apart} the new point $\til x$ is from the first $x$ that is, we have to measure, in some sense, the distance between them. This manner of extending the notion of variation --and thus of derivative-- has been accomplished in different ways. The most crude of these if what is called the \emph{absolute derivative}.
\begin{dfn}[{\cite[expression (1)]{ChIn}}] Let $(X,d_X)$ and $(Y,d_Y)$ be two metric spaces and consider $f:X\to Y$ and $x\in X$. We say $f$ is \emph{absolutely differentiable at $x$} if and only if the following limit --called absolute derivative of $f$ at $x$-- exists:
\[f^{|\prime|}(x):=\lim_{\til x\to x}\frac{d_Y(f(x),f(\til x))}{d_X(x,\til x)}.\]
\end{dfn}
In the case of differentiable functions $f:\bR\to\bR$ we have that, as expected, $f^{|\prime|}=|f'|$ \cite[Proposition 3.1]{ChIn}. Hence, this result conveys the true meaning of the absolute derivative --it is the absolute value of the derivative-- and it extends the notion of derivative to the broader setting of metric spaces. Even so, this definition may seem somewhat unfulfilling as a generalization. For instance, in the case of the real line, it does not preserve the spirit of the intuitive notion of \emph{`infinitesimal rates of change'}: changes of rate have, of necessity, to be allowed to be \emph{negative}.

A more subtle extension of differentiability to the realm of metric spaces can be achieved through \emph{mutational analysis} where the affine structure of differentials is changed by a family of functions, called \emph{mutations}, that mimic the properties and behavior of derivatives. We refer the reader to \cite{Lorenz} for more information on the subject.

The considerations above bring us to another possible extension of the notion of derivative: that of the \emph{Stieltjes derivative}, also known as \emph{$g$--derivative}. Here we present the definition used in \cite{PoRo}. However, a lot of previous work exists on the topic of differentiation with respect to a function, such as the work Averna and Preiss, \cite{Aversa1999}, Daniell \cite{Daniell1918,Daniell1929} or even more classical references like 
	\cite{Lebesgue2009}.

\begin{dfn}[\cite{PoRo}]Let $g:\bR\to\bR$ be a monotone nondecreasing function which is continuous from the left. The \emph{Stieltjes derivative with respect to $g$} --or \emph{$g$--derivative}-- of a function $f:\bR\to\bR$ at a point $x\in\bR$ is defined as follows, provided that the corresponding limits exist:
\begin{align*} f'_g(x) & =\lim_{\til x\to x}\frac{f(\til x)-f(x)}{g(\til x)-g(x)} \text{ if } g \text{ is continuous at } x \text{, or}\\
 f'_g(x) & =\lim_{\til x\to x^+}\frac{f(\til x)-f(x)}{g(\til x)-g(x)} \text{ if } g \text{ is discontinuous at } x.
 \end{align*}
\end{dfn}
Clearly, we have defined $\Delta x$ through a rescaling of the abscissae axis by $g$. Observe that, although $d(x, \til x)=|g(\til x)-g(x)|$ is a pseudometric \cite{FP2016}, $\Delta x=g(\til x)-g(x)$ is allowed to change sign. 

The aim of this paper is to take this generalization one step further in the following sense. The definition of $\Delta x$ does not have to depend on a rescaling, but its absolute value definitely has to suggest, in a broad sense, the notion, if not of distance, of being \emph{far apart} or \emph{close} as well as the \emph{direction} --change of sign. That is why we introduce the notion of \emph{displacement} (Definition~\ref{deltadef}). This definition takes to full generality the ideas and results in \cite{PoRo,FP2016}.

This work is structured as follows. In Section~\ref{displacements} we define the basic concept the rest of the paper revolves around: the notion of \emph{displacement space}. Specifically, in Subsection~\ref{definition} we develop the definition and basic properties of displacements, linking them to previously known concepts and illustrating their diversity with several examples. On the other hand, in Subsection~\ref{topology} we endow the displacement space with a natural topology and prove various useful properties. 

Section~\ref{measure} deals with the construction of a measure associated to displacement spaces. We restrict ourselves to the real line, where we first construct its associated measure as a Lebesgue-Stieltjes measure. Then, we construct a theory of integration for displacement spaces. Here we define the concept of \emph{integral with respect to a path of measures} which will be the key to defining an integral associated to a displacement.

Section~\ref{derivatives} is devoted to the definition and properties of a displacement derivative which will be later be proven to be compatible with the displacement measure in that we can provide a Fundamental Theorem of Calculus relating both of them (Theorems~\ref{FTC} and~\ref{FTC2}). Later, in Section~\ref{Stieltjes}, we study the connection existing between this type of derivatives and Stieltjes derivatives and, in Section 6, we propose a diffusion model on smart surfaces based on displacements.

The last section is devoted to the conclusions of this work and the open problems lying ahead.

\section{Displacement spaces}\label{displacements}
In this section we focus on the definition of displacement spaces. This new framework is then illustrated with some examples which show, for example, that every set equipped with a metric map is a displacement space. We also study a topological structure that displacement spaces can be endowed with.
\subsection{Definitions and properties}\label{definition}
Let us make explicit the basic definition of this paper.
\begin{dfn}\label{deltadef} Let $X\ne\emptyset$ be a set. A \emph{displacement} is a function $\Delta:X^2\to\bR$ such that the following properties hold:
\begin{itemize}
	\item[(H1)] $\Delta(x,x)=0,\ x\in X$.
	\item[(H2)] For all $x,y\in X$,
	\[\lili{z\rightharpoonup y}|\Delta(x,z)|=|\Delta(x,y)|,\]
	where
	\[\lili{z\rightharpoonup y}|\Delta(x,z)|:=\sup \left\{\liminf_{n\to\infty}|\Delta(x,z_n)|: (z_n)_{n\in\bN}\subset X,\ \Delta(y,z_n)\xrightarrow{n\to\infty}0\right\}.\]
	\end{itemize}
	All limits occurring in this work will be considered with the usual topology of $\bR$. A pair $(X,\Delta)$ is called a \emph{displacement space}.
\end{dfn}

\begin{rem}Why (H1) and (H2)? These two hypotheses are of prominent topological flavor. (H1) will guarantee that open balls are nonempty in the to-be-defined non-necessarily-metric topology related to $\Delta$. On the other hand, (H2) will be sufficient (and indeed necessary) to show that open balls are, indeed, open (Lemma~\ref{lemob}) and that the $\Delta$--topology is second countable (Lemma~\ref{count}). We will later discuss (Remark~\ref{bigrem}) whether or not we can forestall (H2) when we restrict ourselves to displacement calculus.
\end{rem}
\begin{rem}
	Note that, for (H2) to be satisfied, it is enough to show that $\lili{z\rightharpoonup y}|\Delta(x,z)|\le|\Delta(x,y)|$ for all $x,y\in X$, as the reverse inequality always holds.
\end{rem}

The following lemma gives a useful sufficient condition for (H2) to be satisfied.
\begin{lem}\label{ti}
	Let $X$ be a set and $\Delta:X^2\to\bR$. Assume that the following property holds:
	\begin{enumerate}
		\item[\textup{(H2')}] There exists a strictly increasing left--continuous map $\varphi:[0,+\infty)\rightarrow[0,+\infty)$, continuous at $0$, satisfying $\varphi(0)=0$ and such that, for $\psi(x,y):=\varphi(|\Delta(x,y)|)$,
		\begin{equation}\label{ppc}
		\psi(x,z)\leq \psi(x,y)+\psi(y,z);\quad x,y,z\in X.
		\end{equation}
	\end{enumerate}
	Then $\Delta$ satisfies \rm{(H2)}.
\end{lem}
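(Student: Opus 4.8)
The plan is to reduce the statement to the nontrivial half of \textup{(H2)} and then transfer a $\limsup$ estimate through $\varphi$. By the remark immediately preceding the lemma it is enough to prove
\[\lili{z\rightharpoonup y}|\Delta(x,z)|\le|\Delta(x,y)|\qquad\text{for all }x,y\in X,\]
that is, to show that whenever $(z_n)_{n\in\bN}\subset X$ satisfies $\Delta(y,z_n)\xrightarrow{n\to\infty}0$ one has $\liminf_{n\to\infty}|\Delta(x,z_n)|\le|\Delta(x,y)|$.

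So I would fix such a sequence $(z_n)$. Since $\Delta(y,z_n)\to 0$ we also have $|\Delta(y,z_n)|\to 0$, and then, because $\varphi$ is continuous at $0$ with $\varphi(0)=0$, it follows that $\psi(y,z_n)=\varphi(|\Delta(y,z_n)|)\to 0$. Plugging this into~\eqref{ppc}, i.e.\ into $\psi(x,z_n)\le\psi(x,y)+\psi(y,z_n)$, and taking upper limits yields
\[\limsup_{n\to\infty}\varphi(|\Delta(x,z_n)|)=\limsup_{n\to\infty}\psi(x,z_n)\le\psi(x,y)=\varphi(|\Delta(x,y)|).\]

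It remains to ``undo'' $\varphi$, and this is the step that needs a little care: $\varphi$ is only assumed strictly increasing (not continuous or surjective), so it cannot simply be inverted. Instead I would argue by contradiction. If $\liminf_{n}|\Delta(x,z_n)|>|\Delta(x,y)|$, choose $\e>0$ and $N\in\bN$ with $|\Delta(x,z_n)|\ge|\Delta(x,y)|+\e$ for all $n\ge N$; strict monotonicity of $\varphi$ then gives $\varphi(|\Delta(x,z_n)|)\ge\varphi(|\Delta(x,y)|+\e)>\varphi(|\Delta(x,y)|)$ for $n\ge N$, hence $\limsup_{n\to\infty}\varphi(|\Delta(x,z_n)|)\ge\varphi(|\Delta(x,y)|+\e)>\varphi(|\Delta(x,y)|)$, contradicting the displayed inequality. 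Therefore $\liminf_{n}|\Delta(x,z_n)|\le|\Delta(x,y)|$, which completes the argument.

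The main (and essentially only) obstacle is this last transfer, from an estimate on $\varphi$ of the displacements back to an estimate on the displacements themselves: once one resists applying a possibly nonexistent $\varphi^{-1}$ and instead uses only strict monotonicity together with the behaviour of $\varphi$ at $0$, the rest is routine bookkeeping with $\liminf$ and $\limsup$ and the triangle inequality. (Note that left-continuity of $\varphi$ is not actually used in this argument.)
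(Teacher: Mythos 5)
Your proof is correct. The skeleton is the same as the paper's: apply the triangle inequality~\eqref{ppc} along the sequence, use $\varphi(0)=0$ and continuity of $\varphi$ at $0$ to kill the $\psi(y,z_n)$ term, and then transfer the resulting bound on $\varphi(|\Delta(x,z_n)|)$ back to a bound on $|\Delta(x,z_n)|$. Where you genuinely diverge is in that last transfer. The paper proves the auxiliary inequality
\[\liminf_{n\to\infty}\varphi\bigl(|\Delta(x,z_n)|\bigr)\ge\varphi\Bigl(\liminf_{n\to\infty}|\Delta(x,z_n)|\Bigr),\]
which is exactly where the left--continuity of $\varphi$ enters, and then cancels $\varphi$ on both sides using strict monotonicity. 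You instead argue by contradiction: if $\liminf_n|\Delta(x,z_n)|>|\Delta(x,y)|$ then eventually $|\Delta(x,z_n)|\ge|\Delta(x,y)|+\e$, and strict monotonicity alone forces $\limsup_n\varphi(|\Delta(x,z_n)|)\ge\varphi(|\Delta(x,y)|+\e)>\varphi(|\Delta(x,y)|)$, contradicting the bound obtained from~\eqref{ppc}. Your route is slightly more elementary (no semicontinuity lemma for $\varphi\circ\liminf$) and, as you observe, dispenses with left--continuity entirely; this makes precise the paper's own remark, stated just after the lemma, that the left--continuity assumption in (H2') can be dropped. The paper's route, in exchange, isolates a reusable fact about monotone left--continuous maps and lower limits. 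Both arguments are complete and correct.
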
 
 \begin{proof}
 Fix $x,y\in X$ and let $(z_n)_{n\in \bN}\subset X$ such that $\lim_{n\to\infty}|\Delta(y,z_n)|= 0$. Then, condition~\eqref{ppc} yields
	$\psi(x,z_n)-\psi(y,z_n)\le \psi(x,y).$
	Hence,
	\[\psi(x,y)\ge \liminf_{n\to\infty}(\psi(x,z_n)-\psi(y,z_n))\ge \liminf_{n\to\infty}\psi(x,z_n)-\limsup_{n\to\infty}\psi(y,z_n).\]
	Since $\varphi(0)=0$, $\varphi$ is continuous at $0$ and $\lim_{n\to\infty}|\Delta(y,z_n)|= 0$, $\limsup_{n\to\infty}\psi(y,z_n)=0$, so
	\begin{equation}\label{varin}
		\psi(x,y)\ge \liminf_{n\to\infty}\psi(x,z_n).
	\end{equation}
 Let us show that 
 \begin{equation}\label{varin2}
 		\liminf_{n\to\infty}\phi(|\Delta(x,z_n)|)\ge\phi\(\liminf_{n\to\infty}|\Delta(x,z_n)|\).
 \end{equation}
 Indeed, by definition of $\liminf$, we have that for any $\e\in\bR^+$ there exists $n_0\in\bN$ such that if $n\ge n_0$ then
 $|\Delta(x,z_n)|\ge\liminf_{n\to\infty}|\Delta(x,z_n)|-\e.$ 
 Since $\phi$ is strictly increasing, for each $\e\in\bR^+$ there exists $n_0\in\bN$ such that for $n\ge n_0$ we have
 \[\phi\(|\Delta(x,z_n)|\)\ge\phi\(\liminf_{n\to\infty}|\Delta(x,z_n)|-\e\).\]
 Thus, for any $\e>0$ we have that
 \[\liminf_{n\to\infty}\phi\(|\Delta(x,z_n)|\)\ge\phi\(\liminf_{n\to\infty}|\Delta(x,z_n)|-\e\),\]
 which, using the left--continuity of $\phi$, leads to~\eqref{varin2}.
 Hence, it follows from~\eqref{varin} and~\eqref{varin2} that
 \[\phi(|\Delta(x,y)|)=\psi(x,y)\ge \liminf_{n\to\infty}\psi(x,z_n)=\liminf_{n\to\infty}\phi(|\Delta(x,z_n)|)\ge\phi\(\liminf_{n\to\infty}|\Delta(x,z_n)|\),\]
 which, together with the fact that $\phi$ is strictly increasing, yields that 
 \[|\Delta(x,y)|\ge\liminf_{n\to\infty}|\Delta(x,z_n)|.\]
	Since this holds for any $(z_n)_{n\in \bN}$ in $X$ such that $\lim_{n\to\infty}|\Delta(y,z_n)|= 0$, we have the desired result.
\end{proof}

Lemma~\ref{ti} illustrates that condition (H2) is a way of avoiding the triangle inequality --or more general versions of it-- which is common to metrics and analogous objects. We can find similar conditions in the literature. For instance, in \cite[Definition 3.1]{Roldan}, they use, while defining an \emph{RS--generalized metric space} $(X,\Delta)$, the condition
\begin{enumerate}
		\item[\textup{($D_3'$)}] There exists $C>0$ such that if $x,y\in X$ and \[\lim_{n\to\infty}\Delta(x_n,x)=\lim_{n\to\infty}\Delta(x,x_n)=\lim_{n,m\to+\infty}\Delta(x_n,x_m)=0,\] then
		\[\Delta(x,y)\le C\limsup\Delta(x_n,y).\]
	\end{enumerate}
More complicated conditions can be found in \cite[(H3) Section~3.1, (H3') Section~4.1]{Lorenz}.

Last, we remark that the same statement as (H2'), but dropping the left-continuity, is actually sufficient to prove the results in this work.


In the next examples we use the sufficient condition provided by Lemma~\ref{ti}.
\begin{exa}\label{exa1}
	Consider the sphere $\bS^1$ and define the following map:
\begin{center}
	\begin{tikzcd}[row sep=tiny]
		\bS^1\times\bS^1 \arrow{r}{\Delta} & {[}0,2\pi)\\
		(x,y) \arrow[mapsto]{r} & \min\{\theta\in[0+\infty)\ :\ xe^{i\theta}=y\}.
	\end{tikzcd}\end{center}
$\Delta(x,y)$ is a displacement that measures the minimum counter-clockwise angle necessary to move from $x$ to $y$. From a real life point of view, this map describes the way cars move in a roundabout. Suppose that a car enters the roundabout at a point $x$ and wants to exit at a point $y$. In that case, circulation rules force the car to move in a given direction, which happens to be counter-clockwise in most of the countries around the world. In this case, drivers are assumed to take the exit $y$ as soon as they reach it.

It is clear that (H1) holds. For (H2'), take $\phi(r)=r$. Then, for $x,y,z\in\bS^1$, if $\Delta(x,y)+\Delta(y,z)\ge 2\pi$, then (H2') clearly holds. Otherwise,
$\Delta(x,z)=\Delta(x,y)+\Delta(y,z)$, so (H2') holds.
\end{exa}
\begin{exa}
	Let $(X,E)$ be a complete weighted directed graph, that is, $X=\{x_1,\dots,x_n\}$ is a finite set of $n\in\bN$ vertices and $E\in\cM_n(\bR)$ is a matrix with zeros in the diagonal and positive numbers everywhere else. The element $e_{j,k}$ of the matrix $E$ denotes the weight of the directed edge from vertex $x_j$ to vertex $x_k$. This kind of graph can represent, for instance, the time it takes to get from one point in a city to another by car, as Figure~\ref{fig:mapsantiago} illustrates.
\begin{figure}[h]
\centering
\includegraphics[width=.8\linewidth]{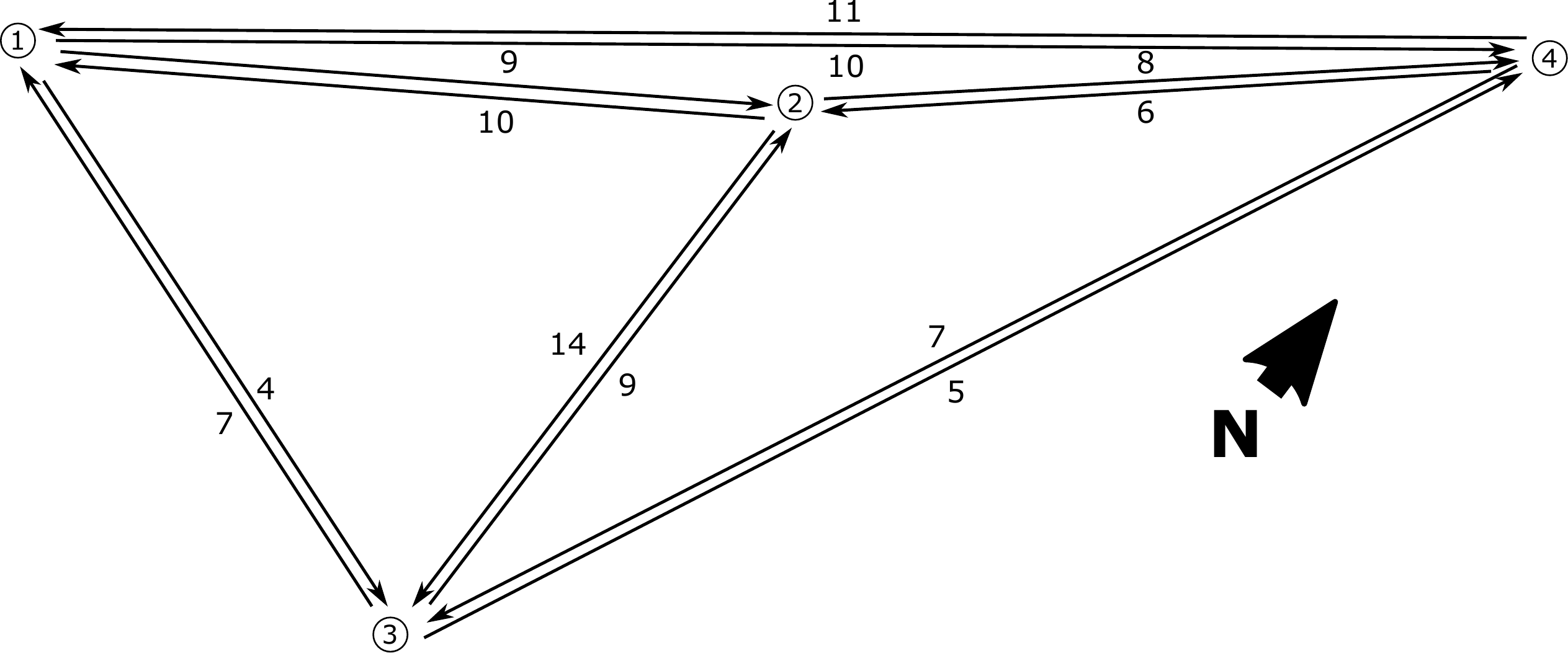}
\caption{Graph indicating the time in minutes it takes to go from one place to another in Santiago de Compostela by car (using the least time consuming path) according to \emph{Google Maps} --good traffic conditions assumed. The points are placed in their actual relative geometric positions, being \textbf{1}: Faculty of Mathematics (USC), \textbf{2}: Cathedral, \textbf{3}: Train station, \textbf{4}: Bus station. Most of the streets in Santiago are one way, which accounts for the differences in time depending on the direction of the displacement.}
\label{fig:mapsantiago}
\end{figure}

Now, consider the set $\{x_1,\dots,x_4\}$ and the matrix $E$ as given in Figure~\ref{fig:mapsantiago}, that is,
\[E\equiv(e_{j,k})_{j,k=1}^4:=
\begin{pmatrix}
0 & 9 & 4 & 10 \\
10 & 0 & 14 & 8 \\
7 & 9 & 0 & 5 \\
11 & 6 & 7 & 0 \\
\end{pmatrix},\]
and the map $\Delta(x_j,x_k):=e_{j,k}$. It can be checked that $\Delta$ is subadditive --which is to be expected since, if we could get faster from a point to another through a third one \emph{Google Maps} would have chosen that option. Hence, (H2') holds for $\phi(r)=r$, and so $\Delta$ is a displacement.
\end{exa}
\begin{exa}[Zermelo's navigation problem]
	 In 1931, Zermelo solved the following navigation problem \cite{Zer}. Let $F=(u,v)\in\cC(\bR^2,\bR^2)$ be a vector field, for instance, the velocity field of the wind on top of a body of water, or the velocity field of the water itself. Assume an object that moves with constant celerity $V$ on that body of water wants to go from a point $A$ (which we can assume at the origin) to a point $B$. Which is the least time consuming path to take?

We are going to assume that $V>W:=\max_{x\in\bR^2}\sqrt{u(x)^2+v(x)^2}$, that is, the object can navigate against wind. Zermelo proved, using variational methods, that the solution of the problem satisfies the following system of partial differential equations:

\begin{align*}
 x' &= V\cos \theta + u, \\
 y' &= V\sin\theta + v, \\
 \theta' &
 = \sin^2\theta \frac{\partial v}{\partial x} 
 + \sin\theta \cos\theta \left(\frac{\partial u}{\partial x} - \frac{\partial v}{\partial y}
 \right) - \cos^2\theta\frac{\partial u}{\partial y},	
\end{align*}
being the last equation known as \emph{Zermelo's equation}. Observe that, if $u,v\in\cC^2(\bR^2)$, there exists a unique solution of the system. Through the change of variables $(\til x,\til y)=B-(x,y)$, instead of going from the origin to the point $B$ we go from $B$ to the origin, and the equations will provide a different time. This illustrates the fact that, when measuring how far apart something is in terms of time, symmetry is not generally satisfied. For instance, if we measure the distance between two points of a river by the time it takes to get from one point to another it is not the same to go upstream than downstream.

Let $A,B\in\bR^2$. If $\Delta(A,B)$ is the smallest time necessary to arrive from $A$ to $B$ in Zermelo's navigation problem, $\Delta\ge 0$ is a displacement on $\bR^2$, for $\Delta$ is subadditive and (H1)--(H2') are clearly satisfied.

In the symmetric setting --that is, $\Delta(x,y)=\Delta(y,x)$-- this problem is a paradigmatic example of \emph{Finslerian length space}. The theory regarding these spaces has been thoroughly developed but, as stated in \cite{Burago}, although \emph{``one could modify the definitions to allow non symmetric length structures and metrics''}, this case has not been studied yet. What we present in this paper might be an starting point for a theory of \emph{non symmetric length spaces}.
\end{exa}
\begin{exa}\label{Stdisp}
The map $\Delta:\bR\times \bR\rightarrow \bR$ defined as $\Delta(x,y)=g(y)-g(x)$ for a nondecreasing and left-continuous function $g$ (cf.\cite{PoRo}) is a displacement as it satisfies (H2') for $\phi=\Id$. In what follows, we will refer to these displacements as \emph{Stieltjes displacements}. Furthermore, the following lemma shows a way to ensure that a displacement is a Stieltjes displacement.
\end{exa}
\begin{lem} Let $(X,\Delta)$ be a displacement space. Then there exists $g:X\to\bR$ such that $\Delta(x,y)=g(y)-g(x)$ for every $x,y\in X$ if and only if, for every $x,y,z\in X$,
\begin{enumerate}
\item $\Delta(x,y)=-\Delta(y,x)$,
\item $\Delta(x,z)=\Delta(x,y)+\Delta(y,z)$.
\end{enumerate}
\end{lem}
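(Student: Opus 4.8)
The plan is to prove the two implications separately. The forward direction ($\Rightarrow$) is a one-line computation, and the converse rests on a single standard device: fixing a basepoint and ``integrating'' $\Delta$ from it.

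\textbf{The direction $(\Rightarrow)$.} Suppose there is $g:X\to\bR$ with $\Delta(x,y)=g(y)-g(x)$ for all $x,y\in X$. Then for any $x,y,z\in X$ I would simply note $\Delta(x,y)=g(y)-g(x)=-(g(x)-g(y))=-\Delta(y,x)$, which gives (1), and $\Delta(x,y)+\Delta(y,z)=(g(y)-g(x))+(g(z)-g(y))=g(z)-g(x)=\Delta(x,z)$, which gives (2).

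\textbf{The direction $(\Leftarrow)$.} Assume (1) and (2). Since a displacement space has $X\ne\emptyset$ by Definition~\ref{deltadef}, I would fix any $x_0\in X$ and define $g:X\to\bR$ by $g(x):=\Delta(x_0,x)$. For arbitrary $x,y\in X$, applying (2) to the triple $(x_0,x,y)$ yields $\Delta(x_0,y)=\Delta(x_0,x)+\Delta(x,y)$, hence $\Delta(x,y)=\Delta(x_0,y)-\Delta(x_0,x)=g(y)-g(x)$, as required.

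\textbf{Obstacles and remarks.} I expect no genuine obstacle: the only points worth flagging are that $X$ is nonempty (guaranteed by the definition of displacement space) and that $g$ is far from unique — any $g+c$ with $c\in\bR$ works, reflecting the freedom in the choice of basepoint, since (2) gives $\Delta(x_0',x)=\Delta(x_0',x_0)+\Delta(x_0,x)$ for another basepoint $x_0'$. It is also worth remarking that hypothesis (1) is in fact redundant in the converse: taking $z=x$ in (2) and invoking (H1) gives $0=\Delta(x,x)=\Delta(x,y)+\Delta(y,x)$, so (1) follows automatically, and only (2) is actually used.
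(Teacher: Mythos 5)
Your proof is correct and follows essentially the same route as the paper: fix a basepoint $x_0$ and set $g(x)=\Delta(x_0,x)$. The only (minor) difference is that the paper's computation invokes both (1) and (2), whereas you apply (2) directly to the triple $(x_0,x,y)$ and correctly observe that (1) is redundant in the presence of (H1); this is a valid small refinement but not a different argument.
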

\begin{proof}Necessity is straightforward. In order to prove sufficiency, take $x_0\in X$ and define $g(x)=\Delta(x_0,x)$ for $x\in X$. Then,
\[\Delta(x,y)=\Delta(x,x_0)+\Delta(x_0,y)=-\Delta(x_0,x)+g(y)=g(y)-g(x).\qedhere\]
\end{proof}

For the interest of the work ahead, we include the following example of a non--Stieltjes displacement.

\begin{exa}\label{dispexp}
	Let $X=[0,1]$ and $\Delta:X\times X\to[0,+\infty)$ given by
	\[\Delta(x,y)=e^{y^2-x^2}-e^{x-y}.\]
	Clearly, condition (H1) is satisfied. Now, for condition (H2), fix $x,y\in X$. Since $\Delta (x,\cdot)$ is a continuous function, it is enough to show that
	\begin{equation}\label{condejexp}
	|\Delta (x,y)|=\sup\left\{\lim_{n\to\infty} |\Delta(x,z_n)|: (z_n)_{n\in\mathbb N}\subset X,\, \lim_{n\to\infty} |\Delta(y,z_n)|=0 \right\}.
	\end{equation}
	Let $(z_n)_{n\in\mathbb N}\subset X$ be a sequence such that $|\Delta(y,z_n)|\xrightarrow{n\to\infty}0$.
	Then, we have that $e^{z_n^2+z_n}\xrightarrow{n\to\infty}e^{y^2-y}$, from which we get that
	\begin{equation}\label{limexpej}
	\lim_{n\to\infty} z_n^2+z_n=y^2+y.
	\end{equation}
	Define
	\[h(t)=\frac{-1+\sqrt{1+4t}}{2},\quad t\in[0,+\infty).\]
	For any $a\in[0,1]$ we have that $h(a^2+a)=a$. Therefore, applying $h$ to both sides of~\eqref{limexpej} and noting that $h$ is a continuous function, we obtain
	\[y=h(y^2+y)=h\left(\lim_{n\to\infty} z_n^2+z_n\right)=\lim_{n\to\infty}h(z_n^2+z_n)=\lim_{n\to\infty} z_n.\]
	That is, if $(z_n)_{n\in\mathbb N}\subset X$ is such that $|\Delta(y,z_n)|\xrightarrow{n\to\infty}0$, it follows that $(z_n)\xrightarrow{n\to\infty} y$. Hence~\eqref{condejexp} is trivially satisfied, and thus $\Delta$ is a displacement. However, $\Delta$ is not a Stieltjes displacement as
	\[\Delta(1,0)=e-e^{-1}\not= e^{-1}-e=\Delta(0,1).\]
\end{exa}

\subsection{Displacement topologies}\label{topology}
It is a well--known result that a set equipped with a metric map generates a topology through the definition of open balls. The same thing happens with displacement spaces. However, fewer nice properties can be obtained from just the definition.
\begin{dfn}
Given a displacement space $(X,\Delta)$, $x\in X$ and $r\in\bR^+$, we define the \textit{$\Delta$--ball} or simply \emph{ball}) \emph{of center $x$ and radius $r$} as \[B_\Delta(x,r):=\{y\in X\ :\ |\Delta(x,y)|<r\}.\]
Also, we define the \emph{$\Delta$--topology} in the following way:
\[\tau_\Delta:=\left\{U\subset X\ :\ \fa x\in U\ \exists r\in\bR^+,\ B_\Delta(x,r)\subset U\right\}.\]
\end{dfn}

Clearly, $\tau_\Delta$ is a topology. We denote by $\cE_\Delta$ the set of $\Delta$--balls in $X$ and by $\tau_u$ the usual euclidean topology of $\bR^n$ for any $n\in\bN$.

We recall the following definition.

\begin{dfn} 
	Let $\tau_s$ be the topology generated by the intervals $(-\infty,r)\subset\bR$ and $(X,\tau)$ a topological space. We say $f:X\to\bR$ is \emph{upper-semicontinuous} if $f:(X,\tau)\to(\bR,\tau_s)$ is continuous.
\end{dfn}

In what follows we will write $\Delta_x(y):=\Delta(x,y)$.
\begin{lem}\label{lemob}
Let $\Delta:X \times X\to\bR$. Then the following are equivalent:
\begin{enumerate}
	\item $\Delta$ satisfies \textup{(H2)}.
	\item	$B_\Delta(x,r)\in\tau_\Delta$ for all $x\in X$ and $r\in\bR^+$.
	\item For all $x\in X$, $|\Delta_x|:X\to \bR$ is upper-semicontinuous.
	\end{enumerate}
\end{lem}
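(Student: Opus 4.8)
The plan is to prove the cycle of implications $(1)\Rightarrow(3)\Rightarrow(2)\Rightarrow(1)$, translating each statement into a convenient sequential formulation, since the $\Delta$--topology is built from the functions $|\Delta_x|$ and (as promised in the remark after Definition~\ref{deltadef}) it will turn out to be first/second countable, so sequences suffice to detect closure and semicontinuity. First I would unwind the definitions: for a fixed $x$, the ball $B_\Delta(x,r)$ is exactly the preimage $|\Delta_x|^{-1}([0,r))$, and $|\Delta_x|$ is upper-semicontinuous at $y$ in the topological sense iff for every $\e>0$ there is a neighbourhood $U$ of $y$ with $|\Delta_x|<|\Delta_x(y)|+\e$ on $U$. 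The subtle point throughout is that the $\Delta$--topology need not be Hausdorff or metric, so I must be careful to argue with the actual basis of $\Delta$--balls rather than importing metric intuition.

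For $(1)\Rightarrow(3)$: fix $x\in X$ and $y\in X$, and suppose toward a contradiction that $|\Delta_x|$ is not upper-semicontinuous at $y$. Then there is $\e>0$ such that every $\Delta$--ball $B_\Delta(y,\d)$ contains a point $z$ with $|\Delta(x,z)|\ge|\Delta(x,y)|+\e$. Taking $\d=1/n$ produces a sequence $(z_n)$ with $|\Delta(y,z_n)|<1/n$, hence $\Delta(y,z_n)\to 0$, and $|\Delta(x,z_n)|\ge|\Delta(x,y)|+\e$ for all $n$. But then $\liminf_n|\Delta(x,z_n)|\ge|\Delta(x,y)|+\e$, so the supremum defining $\lili{z\rightharpoonup y}|\Delta(x,z)|$ in (H2) is at least $|\Delta(x,y)|+\e>|\Delta(x,y)|$, contradicting (H2). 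Hence $|\Delta_x|$ is upper-semicontinuous at every point, i.e. on all of $X$.

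For $(3)\Rightarrow(2)$: this is immediate from the identity $B_\Delta(x,r)=|\Delta_x|^{-1}\big((-\infty,r)\big)$, which is the preimage of a basic open set of $\tau_s$ under the upper-semicontinuous map $|\Delta_x|$, hence open in $\tau_\Delta$; one should just note that the sets $(-\infty,r)$ form a subbasis (indeed basis) for $\tau_s$, so continuity into $(\bR,\tau_s)$ is equivalent to openness of these preimages.

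For $(2)\Rightarrow(1)$: fix $x,y\in X$ and let $(z_n)\subset X$ satisfy $\Delta(y,z_n)\to 0$; I must show $\liminf_n|\Delta(x,z_n)|\le|\Delta(x,y)|$. Set $r=|\Delta(x,y)|+\e$ for arbitrary $\e>0$. The ball $B_\Delta(x,r)$ is a $\Delta$--open set containing $y$, so by definition of $\tau_\Delta$ there is $\d>0$ with $B_\Delta(y,\d)\subset B_\Delta(x,r)$. Since $\Delta(y,z_n)\to0$, eventually $|\Delta(y,z_n)|<\d$, so $z_n\in B_\Delta(x,r)$, i.e. $|\Delta(x,z_n)|<r=|\Delta(x,y)|+\e$ for all large $n$; therefore $\liminf_n|\Delta(x,z_n)|\le|\Delta(x,y)|+\e$, and letting $\e\to0$ gives the required inequality. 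Combined with the reverse inequality (which always holds, by the remark), this yields (H2). The only place demanding genuine care is $(1)\Rightarrow(3)$, where one must extract the right sequence from the failure of upper-semicontinuity while respecting that the relevant neighbourhood base at $y$ consists of the possibly-irregular balls $B_\Delta(y,1/n)$; everything else is a direct translation between the open-ball description of $\tau_\Delta$ and the $\e$--$\d$ description of upper-semicontinuity.
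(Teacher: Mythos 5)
Your proof is correct and follows essentially the same route as the paper: once upper semicontinuity is unwound through the identity $B_\Delta(x,r)=|\Delta_x|^{-1}((-\infty,r))$, your step $(1)\Rightarrow(3)$ is exactly the paper's proof that balls are $\tau_\Delta$--open, your $(3)\Rightarrow(2)$ is the paper's observation that $2\Leftrightarrow 3$, and your $(2)\Rightarrow(1)$ coincides with the paper's argument. One small point of care: when negating upper semicontinuity you should appeal directly to the definition of $\tau_\Delta$ (the sublevel set $B_\Delta(x,|\Delta(x,y)|+\e)$ fails to contain some ball $B_\Delta(y,\d)$) rather than to ``neighbourhoods'', since at that stage the balls $B_\Delta(y,\d)$ are not yet known to be neighbourhoods of $y$; read this way, your extraction of the sequence $(z_n)$ is precisely the paper's.
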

\begin{proof}
1$\Ra$2. Assume first that $(X,\Delta)$ satisfies (H2). Let $x\in X$ and $r\in\bR^+$ be fixed. If $B_\Delta(x,r)=\emptyset$ then $B_\Delta(x,r)\in\tau_\Delta$ trivially. Assume that $B_\Delta(x,r)\not=\emptyset$. Let us show that, for every $y\in B_\Delta(x,r)$, there exists $\e\in\bR^+$ such that $B_\Delta(y,\e)\subset B_\Delta(x,r)$. Assume this is not the case. Then, there exists $y\in B_\Delta(x,r)$ and $(z_n)_\n\subset X$ such that, for all $n\in\bN$,
\[|\Delta(y,z_n)|<1/n,\quad |\Delta(x,z_n)|\ge r. \]
Hence, we have a sequence $(z_n)_{n\in\bN}$ such that $\lim_{n\to\infty}|\Delta(y,z_n)|=0$ and, by (H2), 
\[\liminf_{n\to\infty}|\Delta(x,z_n)|\ge r>|\Delta(x,y)|=\lili{z\rightharpoonup y}|\Delta(x,z)|, \]
which contradicts the definition of supremum.

2$\Ra$1. Now, if $B_\Delta(x,r)\in \tau_\Delta$ for all $x\in X$ and $r\in\bR^+$, fix $x,y\in X$ and let $r:=|\Delta(x,y)|$, $\e\in\bR^+$. Clearly, $y\in B_\Delta(x,r+\e)$, so there exists $\d_\e\in\bR^+$ such that $B_\Delta(y,\d_\e)\subset B_\Delta(x,r+\e)$. Hence, if $(z_n)_\n\subset X$ is such that $|\Delta(y,z_n)|\to 0$ as $n\to\infty$, there exists $N\in \bN$ such that $|\Delta(y,z_n)|<\d_\e$ for every $n\ge N$, so $|\Delta(x,z_n)|<r+\e$ for every $n\ge N$. Hence $\liminf_{n\to\infty}|\Delta(x,z_n)|\le r+\e$. Since $\e$ was arbitrarily fixed, we get that $\liminf_{n\to\infty}|\Delta(x,z_n)|\le r$, which ends the result.

2$\Leftrightarrow$3. Just observe that $|\Delta_x|^{-1}((-\infty,r))=B_\D(x,r)$.
\end{proof}

\begin{rem}\label{h1r}
Note that hypothesis (H1) is not necessary for the previous result or the definition of the topology itself. In fact, it has only been used so far to show that open balls are nonempty. Allowing the open balls to be the empty set changes nothing as it always belongs to the topology, making the result true in any case.
However, hypothesis (H1) will be key in the definition of the displacement derivative in Section~\ref{derivatives}.
\end{rem}

\begin{lem}\label{lempd}
Let $\Delta:X \times X\to\bR$. If {\rm(H1)} and {\rm(H2)} hold then:
\begin{enumerate}
	\item Every element of $\cE_\Delta$ is nonempty.
	\item Every element of $\tau_\Delta$ is union of elements in $\cE_\Delta$.
	\item $\cE_\Delta$ is a basis of $\tau_\Delta$.

\end{enumerate}
\end{lem}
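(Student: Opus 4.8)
The plan is to verify the three assertions in the order stated, since each builds on the previous one in a standard basis-for-a-topology argument.

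First I would prove part (1). This is essentially immediate from (H1): for any $x\in X$ and $r\in\bR^+$ we have $|\Delta(x,x)|=0<r$ by (H1), so $x\in B_\Delta(x,r)$ and the ball is nonempty. (This is exactly the role of (H1) flagged in Remark~\ref{h1r}.)

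Next, part (2). Let $U\in\tau_\Delta$. By the very definition of $\tau_\Delta$, for each $x\in U$ there exists $r_x\in\bR^+$ with $B_\Delta(x,r_x)\subset U$. I would then write $U=\bigcup_{x\in U}B_\Delta(x,r_x)$: the inclusion $\supseteq$ is clear from the choice of the $r_x$, and the inclusion $\subseteq$ holds because, by part (1), each $x\in U$ lies in its own ball $B_\Delta(x,r_x)$. Hence $U$ is a union of elements of $\cE_\Delta$. (Note this step uses part (1), hence (H1); without it one would also need to know the balls are in $\tau_\Delta$, which is where (H2) enters via Lemma~\ref{lemob}.)

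Finally, part (3). To say $\cE_\Delta$ is a basis of $\tau_\Delta$ I would check the two defining conditions of a basis. First, every element of $\cE_\Delta$ is itself open, i.e.\ $B_\Delta(x,r)\in\tau_\Delta$ for all $x\in X$, $r\in\bR^+$; this is precisely implication $1\Rightarrow 2$ of Lemma~\ref{lemob}, using (H2). Second, every open set is a union of members of $\cE_\Delta$, which is exactly part (2) just proved. Together these give that $\cE_\Delta$ generates $\tau_\Delta$ and every basis element is open, so $\cE_\Delta$ is a basis for $\tau_\Delta$. I do not anticipate any real obstacle here: the only substantive input is Lemma~\ref{lemob} (that balls are open under (H2)) and hypothesis (H1) (that balls are nonempty), both already available; the rest is the routine verification that ``open sets are unions of basis elements.'' If anything, the only point requiring a moment's care is making sure part (2)'s proof invokes part (1) rather than circularly invoking ``$B_\Delta(x,r_x)$ is open,'' but since part (1) is proved first this is unproblematic.
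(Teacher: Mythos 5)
Your proposal is correct and follows essentially the same route as the paper: (H1) gives nonemptiness of balls, the definition of $\tau_\Delta$ gives the union decomposition in part (2), and Lemma~\ref{lemob} supplies openness of the balls for part (3). The only cosmetic difference is in part (3), where you verify the characterization ``basis elements are open and every open set is a union of them'' (which is immediate from part (2) plus Lemma~\ref{lemob}), whereas the paper instead checks the intersection condition for a basis; both are standard and equivalent here.
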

\begin{proof}
	1. Since $\Delta(x,x)=0$, $x\in B(x,r)$ for any $r\in\bR^+$.

	2. Fix $U\in\tau_\Delta$. By definition of $\tau_\Delta$, we know that for every $x\in U$ there exists $r_x\in\bR^+$ such that $B(x,r_x)\subset U$. Since $x\in B(x,r_x)$, we have that $X=\bigcup_{x\in X}B(x,r_x)$.

	3. By (H2) we have that $\cE_\Delta\subset\tau_\Delta$. If $x\in U\cap V$ for $U,V\in \cE_\Delta$, since $U$ and $V$ are open, so is $U\cap V$ and hence, using 1, there exists $W\in\cE_\Delta$ such that $x\in W\subset U\cap V$.
\end{proof}
\begin{exa}
	The conditions obtained in Lemma~\ref{lempd} do not suffice to obtain both (H1) and (H2). Consider the space $X=\{0,1\}$ together with de function $\Delta$ given by $\Delta(0,1)=\Delta(1,1)=0$, $\Delta(1,0)=\Delta(0,0)=1$. In this case $\cE_\Delta=\tau_\Delta$ and the topology coincides with that of the Sierpi\'nski space. Observe that $\Delta$ is not a displacement, although it satisfies the theses 1-3 of Lemma~\ref{lempd}.

	It is also worth to observe that the map $\til\Delta(0,1)=\til\Delta(1,1)=\til\Delta(0,0)=0$, $\til\Delta(1,0)=1$ is a displacement and $\tau_{\til \Delta}=\tau_\Delta$. This means that, if we want to find sufficient conditions in order for a $\Delta$ to be a displacement, those conditions cannot be purely topological. Furthermore, since the Sierpi\'nski space is not regular we deduce that it is not uniformizable, and thus we conclude that not every displacement space is uniformizable.

In the particular context of the real line, some further results can be obtained. In order to archieve them we ask for the following hypothesis for $\Delta:X^2\subset\bR^2\to\bR$.

	\begin{itemize}
		\item[\textup{(H3)}]$\Delta(x,y)\le\Delta(x,z)$ for every $x,y,z\in X$ such that $y\le z$.
		\end{itemize}
	\begin{lem}\label{count}
		Let $\Delta:\bR^2\to\bR$ satisfy {\rm(H1)--(H3)}. Then $(\mathbb R,\tau_\Delta)$ is a second-countable topological space. 
	\end{lem}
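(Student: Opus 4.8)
The plan is to exhibit an explicit countable basis for $\tau_\Delta$. By Lemma~\ref{lempd}, the family $\cE_\Delta$ of $\Delta$--balls is already a basis, so it suffices to extract a countable subfamily that still generates the topology. The natural candidates are balls centred at rationals with rational radii, i.e.\ the family $\cB:=\{B_\Delta(q,r):q\in\bQ,\ r\in\bQ^+\}$, which is manifestly countable. So the heart of the matter is to show that every $B_\Delta(x,\rho)$ with $x\in\bR$, $\rho\in\bR^+$, is a union of members of $\cB$; equivalently, that for every $y\in B_\Delta(x,\rho)$ there exist $q\in\bQ$ and $r\in\bQ^+$ with $y\in B_\Delta(q,r)\subset B_\Delta(x,\rho)$.

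First I would fix $y\in B_\Delta(x,\rho)$, so $|\Delta(x,y)|<\rho$, and use Lemma~\ref{lemob} (the equivalence of (H2) with $|\Delta_x|$ being upper--semicontinuous, already available since (H1)--(H3) imply (H2)) together with the monotonicity hypothesis (H3) to trap $y$ inside an interval $(a,b)$ on which $|\Delta(x,\cdot)|$ stays below $\rho$. This is where (H3) does the real work: it forces $\Delta(x,\cdot)$ to be nondecreasing, hence $|\Delta(x,\cdot)|$ is unimodal (decreasing then increasing) on $\bR$, so the sublevel set $\{z:|\Delta(x,z)|<\rho\}=B_\Delta(x,\rho)$ is an \emph{interval}. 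Pick rationals $q$ with $a<q<y$ (or $q=y$ if $y$ happens to be rational — but one can always perturb) and then choose a rational radius $r$ small enough that $B_\Delta(q,r)\subset(a,b)$; here I would again invoke (H3) to see that $B_\Delta(q,r)$ is itself an interval around $q$ shrinking to $\{q\}$ as $r\to 0^+$ (using (H1): $\Delta(q,q)=0$, so $q\in B_\Delta(q,r)$ for all $r>0$, and upper--semicontinuity of $|\Delta_q|$ at any point where $\Delta(q,\cdot)$ does not jump across the relevant level). Then on $(a,b)\subset B_\Delta(x,\rho)$ we are done.

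The main obstacle I anticipate is the interplay between $y$ possibly being irrational and the jump discontinuities that $\Delta(x,\cdot)$ is allowed to have: $B_\Delta(x,\rho)$, being a sublevel set of an upper--semicontinuous monotone-in-the-second-variable function, is an interval but may be half-open, and more delicately, a point $y$ could sit at a location where approximating rationals $q\nearrow y$ have $B_\Delta(q,r)$ that fails to contain $y$ for small $r$ if $\Delta(q,\cdot)$ jumps between $q$ and $y$. The fix is to exploit that such jump points are countable (monotone function) and to show that one may instead centre the approximating ball slightly \emph{above} $y$ when needed, or handle the at most countably many exceptional $y$ directly by noting each lies in some $B_\Delta(q,r)\subset B_\Delta(x,\rho)$ via a one-sided argument using only the $z\rightharpoonup y$ from the appropriate side. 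Concretely I would argue: since $B_\Delta(x,\rho)$ is an interval $I$ with rational-or-irrational endpoints, write $I=\bigcup\{(\alpha,\beta):\alpha,\beta\in\bQ,\ (\alpha,\beta)\subset I\}$ up to the two endpoints, reduce to showing each open rational subinterval $(\alpha,\beta)$ of $I$ is a union of members of $\cB$, and finally observe that for $q\in(\alpha,\beta)\cap\bQ$ the balls $B_\Delta(q,r)$ with $r\in\bQ^+$ exhaust a neighbourhood base at $q$ inside $(\alpha,\beta)$, so these cover $(\alpha,\beta)$ — the countably many endpoint issues being absorbed because each endpoint that belongs to $I$ is itself interior to $I$ as a point of the open set $B_\Delta(x,\rho)$, contradiction unless it is not in $I$. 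Thus $\cB$ is the desired countable basis and $(\bR,\tau_\Delta)$ is second countable.
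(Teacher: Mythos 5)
There is a genuine gap: the family $\{B_\Delta(q,r)\,:\,q\in\bQ,\ r\in\bQ^+\}$ is in general \emph{not} a basis of $\tau_\Delta$ under (H1)--(H3) alone, so the strategy of taking rational centres cannot be pushed through. Concretely, fix an irrational $\alpha$, let $g=-\chi_{(-\infty,\alpha)}+\chi_{(\alpha,+\infty)}$ and $\Delta(x,y)=g(y)-g(x)$. Then (H1) and (H3) are immediate, and (H2) also holds: since $g$ takes only finitely many values, $\Delta(y,z_n)\to 0$ forces $g(z_n)=g(y)$ for all large $n$, hence $\Delta(x,z_n)=\Delta(x,y)$ eventually and $\liminf_n|\Delta(x,z_n)|=|\Delta(x,y)|$. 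But $B_\Delta(\alpha,1)=\{y:|g(y)|<1\}=\{\alpha\}$ is an open ball, while by (H1) every ball contains its own centre, so no ball centred at a rational point can be contained in $\{\alpha\}$. The point $\alpha$ is exactly one of your ``exceptional $y$'', and your proposed fixes do not apply to it: it lies in no $B_\Delta(q,r)\subset B_\Delta(\alpha,1)$ with $q\in\bQ$ (one-sided or otherwise), and the observation that rational balls \emph{cover} $(\alpha,\beta)$ confuses covering with the basis condition. Note that the lemma does not assume (H5), so $\Delta_y(\cdot)$ may fail to be left-continuous at $y$, which is precisely what happens here. The only repair along your lines is to enlarge the candidate basis by balls centred at the exceptional points and then prove that the exceptional set is countable; but that countability is essentially the content of the lemma, and your argument never establishes it (the exceptional set is not the discontinuity set of a single monotone function --- it involves the whole family $\{\Delta_x\}_{x\in\bR}$, which (H1)--(H3) do not control uniformly).

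For comparison, the paper avoids choosing centres altogether: using (H3) and order-completeness it shows each ball is an interval (possibly half-open or closed), writes an arbitrary $\tau_\Delta$-open set as a union of balls, splits that union according to the four interval types, and extracts a countable subfamily from each piece by invoking the Lindel\"of property of $(\bR,\tau_u)$ for the open intervals and the hereditary Lindel\"of property of the Sorgenfrey line and its mirror image for the half-open ones. The closed endpoints absorbed by the Sorgenfrey argument there are exactly the exceptional points on which your construction breaks down.
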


	\begin{proof}
		First of all, given $x\in\mathbb R$ and $r\in\mathbb R^+$, we can express $B_\Delta(x,r)$ as follows:
		\begin{displaymath}B_\Delta(x,r)=\{y\in\mathbb R: |\Delta(x,y)|<r\}=\{y\in\mathbb R: -r<\Delta_x(y)<r\}=\Delta_x^{-1}((-r,r)).\end{displaymath}
		Moreover, since $\Delta_x$ is non-decreasing, due to the bounded completeness of $(\bR,\le)$, $B_\Delta(x,r)$ is an interval (not necessarily open) with extremal points
		\begin{displaymath}a=\inf\{t\in\mathbb R: -r<\Delta_x(t)\},\quad b=\sup\{t\in\mathbb R: \Delta_x(t)<r\}.\end{displaymath}

		Let $U\in\tau_\Delta$. Then $U=\bigcup_{x\in U} B_\Delta(x,r_x)$ by definition of open set, and so, since each $B_\Delta(x,r_x)$ is a interval, we can write
		\begin{displaymath}U=\bigcup_{i\in\mathcal I}(a_i,b_i)\cup\bigcup_{j\in\mathcal J}[a_j,b_j)\cup\bigcup_{k\in\mathcal K}(a_k,b_k]\cup\bigcup_{l\in\mathcal L}[a_l,b_l],\end{displaymath}
		for some sets of indices $\mathcal I,\mathcal J,\mathcal K,\mathcal L$, where each of those intervals is an open ball of $\tau_\Delta$. 

		The set $A=\bigcup_{i\in\mathcal I}(a_i,b_i)$ is an open set in $(\bR,\tau_u)$ and therefore second countable, which implies that $A$ is Lindel\"of \cite[p. 182]{hart} and, hence, there exists a countable subcover of $A$, i.e., $A=\bigcup_{n\in\mathbb N}(a_{i_n},b_{i_n})$ for some set of indices $\{i_n\}_{n\in\bN}$. Similarly, the set $B=\bigcup_{j\in\mathcal J}[a_j,b_j)$ is an open set in the Sorgenfrey line, which is hereditarily Lindel\"of \cite[p. 79]{hart}, and so $B=\bigcup_{n\in\mathbb N}[a_{j_n},b_{j_n})$ for some set of indices $\{j_n\}_{n\in\bN}$. Analogously, the set $C=\bigcup_{k\in\mathcal K}(a_k,b_k]$ can be expressed as $\bigcup_{n\in\mathbb N}(a_{k_n},b_{k_n}]$. Finally, the set $D=\bigcup_{l\in\mathcal L}[a_l,b_l]$ can be decomposed as $D=\bigcup_{l\in\mathcal L}[a_l,b_l)\cup\bigcup_{l\in\mathcal L}(a_l,b_l]$, and once again, arguing as for the sets $B$ and $C$, we obtain that
		\[D=\bigcup_{l_n\in\bN}[a_{l_n},b_{l_n})\cup\bigcup_{l_n'\in\bN}(a_{l_n'},b_{l_n'}],\]
		for some sets of indices $\{l_n\}_{n\in\bN}$, $\{l_n'\}_{n\in\bN}$.
		However, by the definition of $D$ we have that $a_l,b_l\in D$ for all $l\in\mathcal L$, so
		\[D=\bigcup_{l_n\in\bN}[a_{l_n},b_{l_n}]\cup\bigcup_{l_n'\in\bN}[a_{l_n'},b_{l_n'}],\]
		which is clearly countable. Therefore, $U$ is the countable union of open balls, i.e., $\tau_\Delta$ is a second-countable topology.
	\end{proof}
	\begin{rem}\label{bigrem} This last proof relies heavily on the fact that the real number system, with its usual order, is bounded complete, that is, that every bounded (in the order sense) set has an infimum and a supremum. Observe also that the interaction between the topologies $\tau_u$ and $\tau_\Delta$ plays a mayor role in the proof. Finally, hypothesis (H2) is necessary in this result through Lemma~\ref{lemob}, which implies that open $\Delta$--balls are, indeed, open.

	Related to this last point, the authors would like to comment on the fact that hypothesis (H2) will not be necessary in the particular setting of the displacement calculus. However, it provides --as illustrated before with Lemmas~\ref{lemob} and~\ref{count}-- some information about the relation between $\tau_\Delta$ and the displacement calculus we are yet to develop. In particular, Lemma~\ref{count} shows that, for the real line, every $\tau_u$--Borel $\sigma$--algebra is, in particular, a $\tau_\Delta$--Borel $\sigma$--algebra so the integration theory that will follow, when considering (H2), will be valid for the open sets of $\tau_\Delta$. Nevertheless, while studying specific problems --like differential equations-- we will deal, in general, with intervals or other elements of the $\tau_u$--Borel $\sigma$--algebra without worrying about the specifics of the $\tau_\Delta$ topology, which, as said before, makes (H2) unneeded.
	\end{rem}

\end{exa}
\begin{dfn}\label{cont}
Given displacement spaces $(X,\Delta_1)$ and $(Y,\Delta_2)$, a function $f:X\to Y$ is said to be \emph{$\Delta_1^2$--continuous} if $f:(X,\tau_{\Delta_1})\to(Y,\tau_{\Delta_2})$ is continuous. 

We say that a map $f:X\to\bR^n$ is $\Delta_1$--continuous if $f:(X,\Delta_1)\to(\bR^n,\tau_u)$ is continuous.
\end{dfn}

As usual, continuity can be characterized using open balls, as it is shown in the following result.
\begin{lem}\label{lcont}
Let $(X,\Delta_1)$ and $(Y,\Delta_2)$ be displacement spaces. A map $f:X\to Y$ is $\Delta_1^2$--continuous if and only if 
\begin{equation}\label{epsdel}
\forall x\in X,\ \forall \varepsilon\in\bR^+\ \exists \delta\in\bR^+\text{ such that } f(y)\in B_{\Delta_2}(f(x),\varepsilon)\sfa y\in B_{\Delta_1}(x,\delta).
\end{equation}
\end{lem}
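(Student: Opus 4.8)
The plan is to run the standard argument that continuity between spaces having a basis of ``balls'' is equivalent to an $\varepsilon$--$\delta$ condition, using that $\cE_{\Delta_1}$ and $\cE_{\Delta_2}$ are bases of $\tau_{\Delta_1}$ and $\tau_{\Delta_2}$ respectively (Lemma~\ref{lempd}) and that $\Delta$--balls are genuinely open sets (Lemma~\ref{lemob}, which applies because displacements satisfy (H2)).

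For the forward implication, suppose $f$ is $\Delta_1^2$--continuous and fix $x\in X$ and $\varepsilon\in\bR^+$. Since $\Delta_2$ satisfies (H2), Lemma~\ref{lemob} gives $B_{\Delta_2}(f(x),\varepsilon)\in\tau_{\Delta_2}$, hence $f^{-1}(B_{\Delta_2}(f(x),\varepsilon))\in\tau_{\Delta_1}$ by continuity. As $\Delta_2(f(x),f(x))=0$ by (H1), we have $x\in f^{-1}(B_{\Delta_2}(f(x),\varepsilon))$, so the definition of $\tau_{\Delta_1}$ provides $\delta\in\bR^+$ with $B_{\Delta_1}(x,\delta)\subset f^{-1}(B_{\Delta_2}(f(x),\varepsilon))$; this is exactly~\eqref{epsdel}.

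For the converse, assume~\eqref{epsdel} and let $V\in\tau_{\Delta_2}$; we must show $f^{-1}(V)\in\tau_{\Delta_1}$. Take any $x\in f^{-1}(V)$. Then $f(x)\in V$, and since $V$ is open there is $\varepsilon\in\bR^+$ with $B_{\Delta_2}(f(x),\varepsilon)\subset V$. Applying~\eqref{epsdel} to this $x$ and $\varepsilon$ yields $\delta\in\bR^+$ such that $f(y)\in B_{\Delta_2}(f(x),\varepsilon)\subset V$ for all $y\in B_{\Delta_1}(x,\delta)$, i.e.\ $B_{\Delta_1}(x,\delta)\subset f^{-1}(V)$. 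As $x$ was arbitrary, $f^{-1}(V)\in\tau_{\Delta_1}$, so $f$ is $\Delta_1^2$--continuous.

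There is essentially no hard step; the only place where a hypothesis on $\Delta$ beyond the bare definition of the topology enters is the invocation of Lemma~\ref{lemob} in the forward direction, which guarantees that the target ball $B_{\Delta_2}(f(x),\varepsilon)$ is open so that its preimage is open. One could instead phrase everything through the observation that $\{B_{\Delta_1}(x,\delta):\delta\in\bR^+\}$ is a neighbourhood basis at $x$ in $(X,\tau_{\Delta_1})$, which sidesteps citing Lemma~\ref{lemob} explicitly but uses the same content; either way the remainder is formal manipulation of preimages and the definitions.
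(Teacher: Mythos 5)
Your proof is correct and follows essentially the same route as the paper's: in the forward direction both arguments use that $B_{\Delta_2}(f(x),\varepsilon)$ is open (the paper asserts this directly, you justify it via Lemma~\ref{lemob}) and pull back through $f$, and the converse is the identical covering-by-balls argument. The extra explicit citations of (H1) and Lemma~\ref{lemob} are harmless refinements of what the paper leaves implicit.
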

\begin{proof}
First, assume that $f$ is $\Delta_1^2$--continuous and fix $x\in\mathbb R$ and $\varepsilon\in\bR^+$. Since $U=B_{\Delta_2}(f(x),\varepsilon)\in\tau_{\Delta_2}$, we have that $f^{-1}(U)\in\tau_{\Delta_1}$. Moreover, $x\in f^{-1}(U)$ and so, there exists $\delta\in\bR^+$ such that $B_{\Delta_1}(x,\delta)\subset f^{-1}(U)$. Hence, $f(B_{\Delta_1}(x,\delta))\subset f(f^{-1}(U))\subset U$, that is, there exists $\delta\in\bR^+$ such that 
\begin{displaymath}|\Delta_1(x,y)|<\delta\implies |\Delta_2(f(x),f(y))|<\varepsilon.\end{displaymath}
Conversely, let $U\in\tau_{\Delta_2}$, $y\in f^{-1}(U)$ and $x=f(y)$. Since $U\in\tau_{\Delta_2}$, there exists $\e_x\in\bR^+$ such that $B_{\Delta_2}(x,\e_x)\subset U$. Now, condition~\eqref{epsdel} guarantees the existence of $\delta_y\in\bR^+$ such that
\[f(z)\in B_{\Delta_2}(x,\varepsilon_x),\sfa z\in B_{\Delta_1}(y,\delta_y).\]
Note that $B_{\Delta_1}(y,\delta_y)\subset f^{-1}(U)$ as for any $z\in B_{\Delta_1}(y,\delta_y)$ we have that $f(z)\in U$. Since $y\in f^{-1}(U)$ was arbitrary, $f^{-1}(U)$ is open and so $f$ is $\Delta_1^2$--continuous.
\end{proof}

\section{Displacement measure theory on the real line}\label{measure}

In this section we aim to define a measure over a non--degenerate interval $[a,b]\subset\bR$. To do so, we will use ``local'' measures $\mu_z$, for $z\in [a,b]$, to construct a measure $\mu$ which does not depend on a specific point $z$. In order to achieve that, we will consider $([a,b],\le,\Delta)$ satisfying hypotheses (H1)--(H3), and two extra conditions:
\begin{enumerate}
	\item[\textup{(H4)}] There exists $\c:[a,b]^2\to[1,+\infty)$ such that
	\begin{enumerate}
		\item [\textup{(i)}] For all $x,y,z,\overline z\in [a,b]$, we have
		\[|\Delta(z,x)-\Delta(z,y)|\le \gamma(z,\overline z) |\Delta(\overline z,x)-\Delta(\overline z,y)|.\]
		\item [\textup{(ii)}] For all $z\in [a,b]$,
		\[\lim_{\overline z\to z}\gamma(z,\overline z)=\lim_{\overline z\to z}\gamma(\overline z,z)=1.\]
		\item [\textup{(iii)}] For all $z\in [a,b]$, the maps $\gamma(z,\cdot),\gamma(\cdot,z):[a,b]\to[1,+\infty)$ are bounded.
	\end{enumerate}
	\item[\textup{(H5)}] For every $x\in [a,b],$ $\Delta_x(\cdot)$ is left--continuous (with the usual topology of $\bR$) at $x$.
\end{enumerate}

\begin{rem}
	Note that under hypothesis (H3), it is enough to check that there exists $\c:[a,b]^2\to[1,+\infty)$ such that, for all $x,y\in [a,b]$, $x<y$, we have
	\[\Delta(z,y)-\Delta(z,x)\le \gamma(z,\overline z) (\Delta(\overline z,y)-\Delta(\overline z,x)),\]
	to confirm that (H4) holds.
\end{rem}

	First of all, note that the set of maps $\Delta:[a,b]^2\to\mathbb R$ that satisfy hypotheses \textup{(H1)--(H5)} is not empty, as any Stieltjes displacement satisfies all of them. Moreover, there exist non--Stieltjes displacements that also satisfy all of the hypotheses. To show that this is the case, we will need the following result.

\begin{pro}\label{conddeltah23}
	Let $\Delta:[a,b] ^2 \to \mathbb R$ be a given map and let us denote by $D_2 \Delta$ its partial derivative with respect to its second variable. 
	If $D_2 \Delta$ exists and is continuous on $[a,b]^2$, and there exists $r>0$ such that
	\[D_2 \Delta (x,y) \ge r \quad \mbox{for all $(x,y) \in [a,b]^2$,}\]
	then $\Delta$ satisfies \textup{(H3)--(H5)}.
\end{pro}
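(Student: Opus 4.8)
The plan is to check the three conditions in turn; (H3) and (H5) are essentially free, and the real content is the construction of the comparison function $\gamma$ demanded by (H4). First the easy reductions: since $D_2\Delta(x,\cdot)$ exists on $[a,b]$, the map $\Delta_x=\Delta(x,\cdot)$ is differentiable, hence continuous, on $[a,b]$, so in particular left--continuous at $x$, which is (H5); and $\Delta_x'=D_2\Delta(x,\cdot)\ge r>0$, so $\Delta_x$ is strictly increasing, which is exactly (H3). Because $D_2\Delta(x,\cdot)$ is continuous on $[a,b]$, the Fundamental Theorem of Calculus gives, for all $x,z\in[a,b]$ and all $u<v$ in $[a,b]$,
\[\Delta(z,v)-\Delta(z,u)=\int_u^v D_2\Delta(z,t)\,\dif t,\]
an identity I will lean on for (H4).

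Next I would build $\gamma$. Put $M:=\max_{[a,b]^2}D_2\Delta$, which is finite since $D_2\Delta$ is continuous on a compact square, and note $M\ge r$. As $D_2\Delta\ge r>0$ everywhere, the function $\Phi(u,v,t):=D_2\Delta(u,t)/D_2\Delta(v,t)$ is well defined and continuous on the compact cube $[a,b]^3$, with $0<\Phi\le M/r$ and $\Phi(u,u,t)=1$. I would then set
\[\gamma(z,\overline z):=\max\Bigl\{1,\ \sup_{t\in[a,b]}\Phi(z,\overline z,t)\Bigr\}.\]
Then $1\le\gamma\le M/r$, so $\gamma$ maps $[a,b]^2$ into $[1,+\infty)$ and the sections $\gamma(z,\cdot)$, $\gamma(\cdot,z)$ are bounded, giving (H4)(iii). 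For (H4)(i) I invoke the remark following (H4): using (H3) it suffices to prove, for $x<y$ in $[a,b]$ and arbitrary $z,\overline z$, that $\Delta(z,y)-\Delta(z,x)\le\gamma(z,\overline z)\bigl(\Delta(\overline z,y)-\Delta(\overline z,x)\bigr)$. By construction $D_2\Delta(z,t)\le\gamma(z,\overline z)\,D_2\Delta(\overline z,t)$ for every $t\in[a,b]$, so integrating over $[x,y]$ and applying the FTC identity above yields precisely this estimate.

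The remaining point, (H4)(ii), is the delicate one, and it is where the hypotheses $D_2\Delta\ge r$ and continuity on the compact cube really pay off. Since $\Phi$ is uniformly continuous on $[a,b]^3$, given $z\in[a,b]$ and $\e>0$ I can choose $\d>0$ so that $|\overline z-z|<\d$ forces $|\Phi(z,\overline z,t)-1|=|\Phi(z,\overline z,t)-\Phi(z,z,t)|<\e$ for \emph{all} $t\in[a,b]$; hence $\sup_t\Phi(z,\overline z,t)\in[1-\e,1+\e]$ and therefore $\gamma(z,\overline z)\in[1,1+\e]$, proving $\lim_{\overline z\to z}\gamma(z,\overline z)=1$. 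Running the same argument with $\Phi(\overline z,z,t)=1/\Phi(z,\overline z,t)$ (i.e. swapping the first two slots) gives $\lim_{\overline z\to z}\gamma(\overline z,z)=1$, which finishes (H4). The main obstacle throughout is precisely the design of $\gamma$: it must serve as a single comparison constant valid for all pairs $x<y$ (which forces the supremum over $t$, hence the uniform lower bound $r$ to keep things finite) while still collapsing to $1$ on the diagonal (which needs the uniform, not merely pointwise, continuity of $\Phi$ on the compact cube).
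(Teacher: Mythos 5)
Your proof is correct and follows essentially the same route as the paper's: you construct the identical comparison function $\gamma(z,\overline z)=\max\{1,\max_{\xi\in[a,b]}D_2\Delta(z,\xi)/D_2\Delta(\overline z,\xi)\}$, and you verify (H4)(ii) and (iii) from compactness and uniform continuity exactly as in the text. The only (harmless) variation is in (H4)(i): the paper derives the comparison of increments from the generalized (Cauchy) mean value theorem applied to $\Delta(z,\cdot)$ and $\Delta(\overline z,\cdot)$, whereas you integrate the pointwise inequality $D_2\Delta(z,t)\le\gamma(z,\overline z)\,D_2\Delta(\overline z,t)$ over $[x,y]$ --- both are equally valid.
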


\begin{proof}
	The assumptions imply that for each $x \in [a,b]$, the mapping $\Delta(x, \cdot)$ is increasing and continuous, which is more than (H3) and (H5). Now fix $z, \bar z \in [a,b]$. For $a \le x < y \le b$, the generalized mean value theorem guarantees the existence of $\xi \in (x,y)$ such that
	\[\dfrac{\Delta(z,y)-\Delta(z,x)}{\Delta(\bar z,y)-\Delta(\bar z,x)}=\dfrac{D_2 \Delta(z, \xi)}{D_2 \Delta(\bar z, \xi)},\]
	which implies (H4, i) for
	\begin{equation}
	\label{gammaDdelta}
	\gamma(z,\bar z)=\max\left\{ 1, \max_{a \le \xi \le b}\dfrac{D_2 \Delta(z, \xi)}{D_2 \Delta(\bar z, \xi)}\right\}.
	\end{equation}
	The function $\gamma$ is well--defined and bounded because the three variable mapping
	\[(z,\bar z,\xi) \in [a,b]^3 \longmapsto \dfrac{D_2 \Delta(z, \xi)}{D_2 \Delta(\bar z, \xi)}\]
	is continuous on a compact domain. In particular, (H4, iii) holds.

	Finally, (H4, ii) is a consequence of the fact that $D_2 \Delta$ is continuous on $[a,b]^2$, and, therefore, uniformly continuous on $[a,b]^2$. Indeed, let $z \in [a,b]$ be fixed; for $\varepsilon>0$ we can find $\delta>0$ such that for each $\bar z \in [a,b]$, $|z-\bar z| < \delta$, we have that
	\[|D_2 \Delta (z,\xi)-D_2 \Delta(\bar z, \xi)| < r \, \varepsilon \quad \mbox{for every $\xi \in [a,b]$.}\]
	Therefore, if $|z-\bar z| < \delta$, we have that
	\[\left| \dfrac{D_2 \Delta(z, \xi)}{D_2 \Delta(\bar z, \xi)}-1 \right| = \dfrac{|D_2 \Delta (z,\xi)-D_2 \Delta(\bar z, \xi)|}{D_2 \Delta(\bar z, \xi)}< \varepsilon \quad \mbox{for every $\xi \in [a,b]$.}\]
	We have just proven that
	\[\lim_{\bar z \to z}\dfrac{D_2 \Delta(z, \xi)}{D_2 \Delta(\bar z, \xi)}=1 \quad \mbox{uniformly in $\xi \in [a,b]$.}\]
	Now, for each $\bar z \in [a,b]$, there exists $\xi_{\bar z}$ such that
	\[\gamma(z,\bar z)=\max\left\{ 1, \dfrac{D_2 \Delta(z, \xi_{\bar z})}{D_2 \Delta(\bar z, \xi_{\bar z})} \right\},\]
	hence $\gamma(z,\bar z) \to 1$ as $\bar z \to z$. Similarly, $\gamma(z,\bar z) \to 1$ as $z \to \bar z$.
\end{proof}

\begin{exa}\label{ExDeltanoSti}
Consider the non--Stieltjes displacement in Example~\ref{dispexp}, namely $\Delta:[0,1]\times[0,1]\to\mathbb R$ where
\[\Delta(x,y)=e^{y^2-x^2}-e^{x-y},\quad x,y\in[0,1].\]
It clearly has continuous partial derivatives, and 
\begin{equation}\label{nonstider}
D_2 \Delta(x,y)=2y e^{y^2-x^2}+ e^{x-y} \ge e^{x-y} \ge e^{-1} \quad \mbox{on $[0,1]^2$.}
\end{equation}
Hence, $\Delta$ satisfies \textup{(H1)--(H5)} for $\gamma$ defined as in (\ref{gammaDdelta}).
\end{exa}

Although hypothesis (H5) might seem harmless, when combined with (H4), we obtain left--continuity everywhere.

\begin{pro}\label{plc}
	Consider $([a,b],\Delta)$ satisfying hypotheses \textup{(H4)} and \textup{(H5)}. Then, for each $x\in[a,b]$, the map $\Delta_x:[a,b]\to\bR$ is left--continuous everywhere (with the usual topology of $\bR$).
\end{pro}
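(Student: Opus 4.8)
The plan is to fix $x\in[a,b]$ and a point $y_0\in(a,b]$, take an arbitrary sequence $y_n\uparrow y_0$ with $y_n<y_0$, and show $\Delta_x(y_n)\to\Delta_x(y_0)$. By (H3) the sequence $\Delta_x(y_n)$ is nondecreasing and bounded above by $\Delta_x(y_0)$, so it has a limit $\ell\le\Delta_x(y_0)$; the whole game is to rule out $\ell<\Delta_x(y_0)$, i.e. to show the jump of $\Delta_x$ from the left at $y_0$ vanishes. The idea is to transport the hypothesis (H5), which gives left-continuity of $\Delta_{y_0}$ at the single point $y_0$, to a statement about $\Delta_x$ via the comparison inequality (H4, i).

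First I would record the key consequence of (H4, i): taking in that inequality the roles $z=x$, $\overline z=y_0$, and the two points to be $y_n$ and $y_0$, we get
\[|\Delta(x,y_n)-\Delta(x,y_0)|\le \gamma(x,y_0)\,|\Delta(y_0,y_n)-\Delta(y_0,y_0)| = \gamma(x,y_0)\,|\Delta(y_0,y_n)|,\]
using (H1) for the last equality. Now $\gamma(x,y_0)$ is a fixed finite constant, and by (H5) applied to the point $y_0$ we have $\Delta_{y_0}(y_n)=\Delta(y_0,y_n)\to\Delta(y_0,y_0)=0$ as $y_n\uparrow y_0$. Hence the right-hand side tends to $0$, forcing $\Delta(x,y_n)\to\Delta(x,y_0)$, which is exactly left-continuity of $\Delta_x$ at $y_0$. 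Since $y_0$ was arbitrary in $(a,b]$ and $x$ arbitrary in $[a,b]$, this proves the proposition. (The endpoint $y_0=a$ is vacuous, as there is nothing approaching from the left within $[a,b]$.)

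I do not expect a genuine obstacle here: the whole proof is a one-line application of (H4, i) to pull back (H5) from the base point $y_0$ to an arbitrary base point $x$, with (H1) cleaning up the right-hand side and (H3) not even strictly needed (it only tells us the limit exists monotonically, but the squeeze above gives convergence directly). The only point to be a little careful about is that (H4, i) must be applied with $\overline z = y_0$, so that the right-hand side involves $\Delta_{y_0}$ evaluated near $y_0$ — that is the unique point where (H5) is assumed — rather than $\Delta_x$ near some other point; choosing the comparison base point correctly is the one substantive decision in the argument. One could equally phrase the proof in $\varepsilon$–$\delta$ terms instead of sequentially, which avoids any appeal to monotonicity at all.
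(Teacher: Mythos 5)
Your proof is correct and is essentially the paper's own argument: both apply (H4, i) with the comparison base point $\overline z$ taken to be the point $y_0$ at which left-continuity is sought, so that (H5) for $\Delta_{y_0}$ at $y_0$ controls the right-hand side via the fixed constant $\gamma(x,y_0)$. The only differences are cosmetic — you argue sequentially where the paper uses $\varepsilon$--$\delta$, and your passing mention of (H3) is, as you yourself note, unnecessary (and (H3) is not among the stated hypotheses anyway).
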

\begin{proof}
	Let $\e>0$, $x,y\in[a,b]$ and $\gamma$ be the map on (H4). Let us show that $\Delta_x$ is left--continuous at $y$. Since, by (H5), $\Delta_y$ is left--continuous at $y$, there exists $\delta>0$ such that, for $0<y-s<\delta$, we have $|\Delta_y(y)-\Delta_y(s)|<\e/\gamma(x,y)$.
	Then, for $0<y-s<\delta$, hypothesis (H4) implies that
	\[|\Delta_x(y)-\Delta_x(s)|\le \gamma(x,y)|\Delta_y(y)-\Delta_y(s)|<\varepsilon.\qedhere\]
\end{proof}

 With the previous result in mind, we can define the ``local'' measures $\mu_z$ as the Lebesgue--Stieltjes measure associated with the non--decreasing left--continuous map $\Delta_z$. We shall denote by $\mathcal M_z$ the $\sigma$--algebra over which $\mu_z$ is defined. 


Let us denote by $\cB$ the Borel $\sigma$--algebra (for $\tau_u$) and by $\mathcal{M}:=\bigcap_{z\in[a,b]} \mathcal M_z$. Note that $\mathcal B\subset\mathcal M$ as $\mathcal B\subset\mathcal M_z$ for all $z\in[a,b]$. Moreover, $\mathcal M$ is a $\sigma$--algebra as it is an arbitrary intersection of $\sigma$--algebras. Hence, we can consider the restriction of $\mu_z$, $z\in[a,b]$, to $\mathcal M$. We will still denote it by $\mu_z$. A set $A\in\cM$ is said to be \emph{$\Delta$--measurable}.

 Recall that a function $f:([a,b],\cM)\to(\bR,\cB)$ is measurable if and only if $f^{-1}(U)\in \mathcal M$ for all $U\in\cB$. We will say in that case that $f$ is \emph{$\Delta$--measurable}. This notation will be consistent with the $\Delta$--measure that we will introduce later. Observe that $f$ is $\Delta$--measurable if and only $f:([a,b],\cM_z)\to(\bR,\cB)$ is measurable for all $z\in[a,b]$. 

Hypothesis (H4) allows us to understand the relationship between the different possible measures on $\cM$ depending on $z\in [a,b]$. In particular, given $z,\overline z\in [a,b]$, and an interval $I\subset[a,b]$, it is clear that 
$\mu_z(I)\leq\gamma(z,\overline z) \mu_{\overline z}(I)$
and, as a consequence of the definition of the Lebesgue--Stieltjes measures,
\[\mu_z(A)\leq \gamma(z,\overline z)\mu_{\overline z}(A),\quad\mbox{for all }A\in \mathcal M.\]
Thus, we have that $\mu_{\overline z}\ll \mu_z\ll\mu_{\overline z}$ for all $z,\overline{z}\in X$. Hence, if a property holds $\mu_z$--everywhere, it holds $\mu_{x}$--everywhere for all $x\in[a,b]$. Again, in order to simplify the notation, we will say that such property holds \emph{$\Delta$--everywhere}. Analogously, this expression will be consistent with the $\Delta$--measure presented later in this paper.

Then, given $z,\overline z\in[a,b]$ we can apply the Radon--Nikod\'ym Theorem \cite{Ben} to these measures, so there exist two $\Delta$--measurable functions $h_{\ol z,z}, h_{z,\ol z}:[a,b]\rightarrow[0,\infty)$ such that
\begin{equation}\label{hzzdef}
\mu_{\overline z}(A)=\int_A h_{\ol z,z}\dif\mu_{ z},\quad \mu_z(A)=\int_A h_{z,\ol z}\dif\mu_{\overline z},\quad \mbox{for all }A\in \mathcal M.
\end{equation}
From these expressions it is clear that $h_{z,z}=1$ and $h_{\widetilde z,z}(t)=h_{\widetilde z,\ol z}(t)h_{\ol z, z}(t)$ for $\Delta$--almost all (or simply $\Delta$--a.a.) $t\in[a,b]$ and $z,\ol z,\widetilde z\in[a,b]$. Hence, it follows that $h_{z,\ol z}(t)=1/h_{\ol z,z}(t)$ for $\Delta$--a.a. $t\in[a,b]$. Also note that for $z,\ol z\in[a,b]$, $h_{z,\ol z}\ne0$ $\Delta$--everywhere.

Further properties are shown in the next results.
\begin{pro}
	Given $z,\ol z\in [a,b]$, we have that
	\begin{equation}\label{hbound1}
	\frac{1}{\gamma(\ol z,z)}\le h_{z,\ol z}(t)\le\gamma(z,\ol z),\quad \Delta\mbox{--a.a. }t\in[a,b].
	\end{equation}
\end{pro}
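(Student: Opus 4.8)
The plan is to deduce the bounds directly from the comparison inequality $\mu_z(A)\le\gamma(z,\ol z)\,\mu_{\ol z}(A)$ established just above, together with the defining property~\eqref{hzzdef} of the Radon--Nikod\'ym derivative $h_{z,\ol z}$. The idea is the standard one: a density that is bounded on every set of a generating family is bounded $\Delta$--a.e.

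First I would prove the upper bound $h_{z,\ol z}(t)\le\gamma(z,\ol z)$ for $\Delta$--a.a. $t$. Fix $z,\ol z\in[a,b]$ and write $c=\gamma(z,\ol z)$. Consider the set $A=\{t\in[a,b]:h_{z,\ol z}(t)>c\}$, which is $\Delta$--measurable since $h_{z,\ol z}$ is. Using~\eqref{hzzdef} we have
\[\mu_z(A)=\int_A h_{z,\ol z}\dif\mu_{\ol z}\ge c\,\mu_{\ol z}(A).\]
On the other hand, the comparison inequality gives $\mu_z(A)\le c\,\mu_{\ol z}(A)$. If $\mu_{\ol z}(A)\in(0,\infty)$ these two inequalities force equality $\int_A h_{z,\ol z}\dif\mu_{\ol z}=c\,\mu_{\ol z}(A)=\int_A c\,\dif\mu_{\ol z}$, i.e. $\int_A (h_{z,\ol z}-c)\dif\mu_{\ol z}=0$ with a strictly positive integrand on $A$, a contradiction; hence $\mu_{\ol z}(A)=0$ (the case $\mu_{\ol z}(A)=\infty$ is excluded because $\mu_{\ol z}([a,b])<\infty$, $\Delta_{\ol z}$ being a finite monotone function on a compact interval). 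So $h_{z,\ol z}\le\gamma(z,\ol z)$ $\mu_{\ol z}$--a.e., hence $\Delta$--a.e.

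For the lower bound I would use the already-noted reciprocity $h_{z,\ol z}(t)=1/h_{\ol z,z}(t)$ for $\Delta$--a.a. $t$ (valid since $h_{\ol z,z}\ne 0$ $\Delta$--everywhere). Applying the upper bound just obtained with the roles of $z$ and $\ol z$ interchanged yields $h_{\ol z,z}(t)\le\gamma(\ol z,z)$ for $\Delta$--a.a. $t$, and therefore
\[h_{z,\ol z}(t)=\frac{1}{h_{\ol z,z}(t)}\ge\frac{1}{\gamma(\ol z,z)},\quad\Delta\text{--a.a. }t\in[a,b].\]
Combining the two estimates gives~\eqref{hbound1}.

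The main thing to be careful about is the measure-theoretic bookkeeping: one must know $\mu_{\ol z}$ is finite (so that "$\mu_z(A)\le c\,\mu_{\ol z}(A)$ and $\mu_z(A)\ge c\,\mu_{\ol z}(A)$" really pin down $\mu_{\ol z}(A)=0$ rather than leaving an $\infty=\infty$ loophole), and one must make sure the equivalence of "$\mu_{\ol z}$--a.e." and "$\Delta$--a.e." is being used correctly — but that is exactly the content of the remark, deduced from (H4), that $\mu_{\ol z}\ll\mu_z\ll\mu_{\ol z}$ for all $z,\ol z$. No other subtlety arises; the rest is the routine "density bounded on a class $\Rightarrow$ bounded a.e." argument. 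Alternatively, one could avoid the single-set argument and instead integrate the comparison inequality against indicator functions of arbitrary measurable subsets of $A$, but the contradiction argument above is the cleanest.
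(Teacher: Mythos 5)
Your proof is correct and follows essentially the same route as the paper: derive the upper bound by contradiction on the set where $h_{z,\ol z}>\gamma(z,\ol z)$, using the comparison inequality $\mu_z(A)\le\gamma(z,\ol z)\mu_{\ol z}(A)$ against the Radon--Nikod\'ym identity, and then obtain the lower bound from the reciprocity $h_{z,\ol z}=1/h_{\ol z,z}$ with the roles of $z$ and $\ol z$ swapped. Your explicit attention to the finiteness of $\mu_{\ol z}$ is a small point of extra care that the paper leaves implicit, but the argument is the same.
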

\begin{proof}
	First, assume that $h_{z,\ol z}(t)\le\gamma(z,\ol z),$ does not hold for $\Delta\mbox{--a.a. }t\in[a,b]$. Then, there would exist $A\in\mathcal M$ such that $\mu_{\ol z}(A)>0$ and 
	\[h_{z,\ol z}(t)>\gamma(z,\ol z),\quad \mbox{for all }t\in A.\]
	Hence,
	\[\mu_z(A)=\int_A h_{z,\ol z}(s)\dif\mu_{\ol z}(s)> \int_A \gamma(z,\ol z)\dif\mu_z(s)=\gamma(z,\ol z)\mu_{\ol z}(A),\]
	which is a contradiction. Therefore,
	\begin{equation}\label{ineqhbound}
	h_{z,\ol z}(t)\le\gamma(z,\ol z),\quad \Delta\mbox{--a.a. }t\in[a,b].
	\end{equation} 
	For the other inequality, take $h_{\ol z,z}$ as in~\eqref{hzzdef}. Using~\eqref{ineqhbound}, we have that
	\[\gamma(\ol z,z)\ge h_{\ol z,z}(t)=\frac{1}{h_{z,\ol z}(t)},\quad \Delta\mbox{--a.a. }t\in[a,b],\]
	from which the result follows.
\end{proof}

Note that this result yields that, for $z,\ol z\in[a,b]$ fixed, we have that
\begin{equation*}
\frac{1}{\gamma(\ol z,z)}\le h_{z,\ol z}(t)\le\gamma(z,\ol z),\quad \mbox{for all }t\in[a,b]\setminus A_z,
\end{equation*}
with $\mu_z(A_z)=0$. Let us define $\tilde h_{z,\ol z}:[a,b]\to [0,+\infty)$ as
\[
\tilde h_{z,\ol z}(t)=\left\{
\begin{array}{lcl}
h_{z,\ol z}(t),&\mbox{if}& t\in[a,b]\setminus A_z,\\
1,&\mbox{if}& t\in A_z.
\end{array}
\right.
\]
Then, it follows from~\eqref{hbound1} that
\begin{equation}\label{hbound}
\frac{1}{\gamma(\ol z,z)}\le \tilde h_{z,\ol z}(t)\le\gamma(z,\ol z),\quad \mbox{for all }t\in[a,b].
\end{equation}
Moreover, $\tilde h_{z,\ol z}=h_{z,\ol z}$ $\Delta$--a.e. in $[a,b]$, and in particular, $\mu_z$--a.e. in $[a,b]$. Hence, we have that
\[\mu_z(A)=\int_{A}\tilde h_{z,\ol z}\dif\mu_{\ol z},\quad\mbox{for all }A\in\mathcal M.\]
Thus, we can assume without loss of generality that the functions in~\eqref{hzzdef} satisfy~\eqref{hbound}.
Given this consideration, we can obtain the following result.
\begin{pro}
	For all $t\in[a,b]$, we have that
	\begin{equation}\label{hlim}
	\lim_{z\to \ol z}h_{z,\ol z}(t)=1.
	\end{equation}
\end{pro}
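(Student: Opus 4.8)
The statement is an immediate consequence of the two-sided bound~\eqref{hbound} together with hypothesis (H4, ii). Recall that, by the normalization carried out just before the statement, we may assume the densities $h_{z,\ol z}$ appearing in~\eqref{hzzdef} satisfy
\[
\frac{1}{\gamma(\ol z,z)}\le h_{z,\ol z}(t)\le\gamma(z,\ol z)\qquad\text{for \emph{all} }t\in[a,b],
\]
not merely $\Delta$--a.e. Fix $t\in[a,b]$ and $\ol z\in[a,b]$. Applying (H4, ii) with the fixed point being $\ol z$ and the variable point being $z$ gives
\[
\lim_{z\to\ol z}\gamma(\ol z,z)=\lim_{z\to\ol z}\gamma(z,\ol z)=1,
\]
so both the lower bound $1/\gamma(\ol z,z)$ and the upper bound $\gamma(z,\ol z)$ tend to $1$ as $z\to\ol z$.

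By the squeeze theorem applied to $h_{z,\ol z}(t)$ between these two bounds, we conclude $\lim_{z\to\ol z}h_{z,\ol z}(t)=1$. Since $t\in[a,b]$ was arbitrary, this proves~\eqref{hlim}.

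\textbf{Remark on difficulty.} There is no real obstacle here: the content was already absorbed into the preceding normalization lemma (which upgraded the a.e. bound~\eqref{hbound1} to the everywhere bound~\eqref{hbound}) and into (H4, ii). The only point worth stating carefully is that without that normalization one would only get the limit for $\Delta$--a.a.\ $t$, with the exceptional null set a priori depending on $\ol z$; the passage to $\tilde h_{z,\ol z}$ is exactly what makes the conclusion hold pointwise everywhere, which is what is needed in the sequel.
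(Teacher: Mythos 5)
Your proof is correct and follows essentially the same route as the paper: fix $t$ and $\ol z$, invoke the everywhere-valid two-sided bound~\eqref{hbound} (i.e.~\eqref{hboundz}), and squeeze using (H4, ii). Your remark about the role of the normalization $\tilde h_{z,\ol z}$ in upgrading the a.e.\ bound to a pointwise one is an accurate gloss on what the paper does implicitly just before the statement.
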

\begin{proof}
	Fix $\ol z, t\in[a,b]$. Then from~\eqref{hbound} we obtain that
	\begin{equation}\label{hboundz}
	\frac{1}{\gamma(\ol z,z)}\le h_{z,\ol z}(t)\le\gamma(z,\ol z),\quad \mbox{for all }z\in[a,b].
	\end{equation}
	Hence it is enough to consider the limit when $z\to\ol z$ in the previous inequalities, together with hypothesis \textup{(H3, ii)}, to obtain the result.
\end{proof}

\begin{rem}\label{hmbound}
	Note that, given $\overline z\in[a,b]$, we also have that there exist $m_{\ol z},M_{\ol z}>0$ such that 
	\[m_{\ol z}\le h_{t,\overline z}(t)\le M_{\ol z},\quad \mbox{for all }t\in[a,b].\]
	Indeed, fix $\ol z, t\in[a,b]$. Then,~\eqref{hboundz} with $z=t$ yields
	\[\frac{1}{\gamma(\ol z,t)}\le h_{t,\ol z}(t)\le\gamma(t,\ol z).\]
	Now the result follows from \textup{(H3, iii)}.
\end{rem}

We will now focus on the definition of the $\Delta$--measure which is based on the integrals defined by the measures $\mu_z$, $z\in[a,b]$. We first will show that a bigger family of maps is well--defined.

\begin{pro}
	Let $\a:([a,b],\cB)\to([a,b],\cB)$ be a measurable map and $z\in[a,b]$. Then the map $\mu_\a:\mathcal M\to [0,+\infty]$ given by
	\[\mu_\a(A)=\int_A h_{\a(t),z}(t)\dif\mu_z(t),\quad A\in\mathcal M,\]
	 is well--defined, that is, $h(\cdot,\a(\cdot)):=h_{\a(\cdot),z}(\cdot)$ is $\Delta$--measurable.
\end{pro}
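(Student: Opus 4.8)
The plan is to reduce the measurability of $t \mapsto h_{\a(t),z}(t)$ to the joint measurability of the two-variable function $(w,t) \mapsto h_{w,z}(t)$ on $[a,b]^2$, and then compose with the measurable map $t \mapsto (\a(t),t)$. First I would fix $z \in [a,b]$ and consider, for each $w \in [a,b]$, the Radon--Nikod\'ym derivative $h_{w,z}$ normalized as in~\eqref{hbound} so that $1/\gamma(z,w) \le h_{w,z}(t) \le \gamma(w,z)$ for all $t$. The key structural fact I would establish is that $h_{w,z}$ can be recovered pointwise as a limit of difference quotients of the Lebesgue--Stieltjes measures: for $\mu_z$--a.a. $t$, one has $h_{w,z}(t) = \lim \mu_w([t,t'])/\mu_z([t,t'])$ as $t' \downarrow t$ (or the analogous symmetric/left-sided expression dictated by the left-continuity from (H5) and Proposition~\ref{plc}), since $\mu_w \ll \mu_z \ll \mu_w$ and the measures are Lebesgue--Stieltjes measures of the left-continuous nondecreasing functions $\Delta_w, \Delta_z$. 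Concretely $\mu_w([t,t']) = \Delta_w(t') - \Delta_w(t) = \Delta(w,t') - \Delta(w,t)$, so the difference quotient is $\bigl(\Delta(w,t') - \Delta(w,t)\bigr)/\bigl(\Delta(z,t') - \Delta(z,t)\bigr)$, which is a genuinely explicit function of $(w,t,t')$.

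Next I would run this limit along a fixed sequence $t_n' \downarrow t$ (rational approximants, say) to write $h_{w,z}(t)$, outside a $\mu_z$-null set, as a pointwise limit of the functions $(w,t) \mapsto \bigl(\Delta(w,t_n') - \Delta(w,t)\bigr)/\bigl(\Delta(z,t_n') - \Delta(z,t)\bigr)$. Each of these is jointly Borel in $(w,t)$ provided $\Delta$ itself is jointly Borel on $[a,b]^2$ — and here I would invoke (H3) together with (H4)/(H5): for fixed $t$, $w \mapsto \Delta(w,t)$ is controlled by $\gamma$, and for fixed $w$, $t \mapsto \Delta(w,t)$ is nondecreasing and left-continuous, hence Borel; a Carath\'eodory-type argument (a function monotone in one variable and, say, continuous or Borel in the other is jointly Borel) gives joint Borel measurability of $\Delta$, and division by the nonvanishing denominator (recall $h_{z,\ol z}\ne 0$ $\Delta$--everywhere, and the denominator difference is positive $\Delta$-a.e.) preserves this. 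A countable pointwise limit of jointly Borel functions is jointly Borel, so $(w,t) \mapsto h_{w,z}(t)$ agrees $\mu_z$-a.e. (in $t$, for each $w$) with a jointly $\cB \otimes \cB$-measurable function $H(w,t)$.

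Then I would finish as follows: the map $t \mapsto (\a(t), t)$ from $([a,b],\cB)$ to $([a,b]^2, \cB\otimes\cB)$ is measurable since $\a$ is measurable and the identity is measurable, so $t \mapsto H(\a(t),t)$ is Borel, hence $\Delta$--measurable. It remains to argue that this coincides $\Delta$--a.e. with $t \mapsto h_{\a(t),z}(t)$ so that $\mu_\a$ is unambiguously defined; this is where I expect the main obstacle. The subtlety is that the exceptional $\mu_z$-null set depends on $w$, so "$H(w,\cdot) = h_{w,z}(\cdot)$ off a null set for each $w$" does not immediately give "$H(\a(t),t) = h_{\a(t),z}(t)$ off a null set in $t$." To handle this I would instead build $H$ so that the identity $H(w,t) = \lim_n \bigl(\Delta(w,t_n')-\Delta(w,t)\bigr)/\bigl(\Delta(z,t_n')-\Delta(z,t)\bigr)$ holds for \emph{every} $w$ at every $t$ outside a single null set $N$ (independent of $w$) — this is possible because, by the Lebesgue--Stieltjes differentiation theory applied to the pair $(\mu_z,\mu_w)$, the set of $t$ where the difference-quotient limit fails to equal the Radon--Nikod\'ym density is $\mu_z$-null, and one checks using the uniform bounds~\eqref{hbound} and the structure $\Delta_w(t)-\Delta_z(t)$ that these bad sets can be absorbed into the (single) set of non-Lebesgue points of $\Delta_z$, which is $\mu_z$-null. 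Granting that, $H(w,t) = h_{w,z}(t)$ for all $w$ and all $t \notin N$, so in particular $H(\a(t),t) = h_{\a(t),z}(t)$ for $t \notin N$, and since $\mu_z(N)=0$ means $N$ is $\Delta$-null, $\mu_\a$ is well defined and equals $\int_A H(\a(t),t)\dif\mu_z(t)$. The only other point to keep an eye on is integrability, but the two-sided bound $1/\gamma(z,\a(t)) \le h_{\a(t),z}(t) \le \gamma(\a(t),z)$ from~\eqref{hbound} together with (H4, iii) shows the integrand is bounded, so the integral makes sense (possibly $+\infty$ only through $\mu_z(A)$, as claimed).
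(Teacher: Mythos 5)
Your overall strategy --- reduce to joint measurability of $(w,t)\mapsto h_{w,z}(t)$ via difference quotients of the Lebesgue--Stieltjes measures and then compose with $t\mapsto(\a(t),t)$ --- is genuinely different from the paper's, and the step you yourself flag as ``the main obstacle'' is exactly where the argument breaks down as written. The Lebesgue--Stieltjes differentiation theorem gives, \emph{for each fixed} $w$, a $\mu_z$--null set $N_w$ off which the difference quotients converge to the density; there are uncountably many $w$, and the assertion that all these bad sets ``can be absorbed into the (single) set of non-Lebesgue points of $\Delta_z$'' is not justified and is not true for free. To repair it you would have to take a countable dense set $W_0$ of base points, set $N=\bigcup_{w\in W_0}N_w$, and then transfer the convergence from $w_k\in W_0$ to arbitrary $w$ using the two-sided bounds $D_n(w_k,t)/\gamma(w_k,w)\le D_n(w,t)\le\gamma(w,w_k)D_n(w_k,t)$ from (H4,i) together with $\gamma(w,w_k),\gamma(w_k,w)\to 1$ --- but that transfer also requires knowing $h_{w_k,z}(t)\to h_{w,z}(t)$, i.e.\ continuity of $h_{\cdot,z}(t)$ in the first index for a.a.\ $t$, which is precisely the property you never establish. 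A second, smaller gap: your joint Borel measurability of each difference quotient rests on joint Borel measurability of $\Delta$ itself, for which you invoke a ``Carath\'eodory-type argument''; but that argument needs measurability of $w\mapsto\Delta(w,t)$ for fixed $t$, and the hypotheses (H1)--(H5) control only \emph{increments} $\Delta(w,t')-\Delta(w,t)$ through $\gamma$, so this regularity in $w$ also has to be extracted (via (H1) and (H4,ii)) rather than asserted.

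The paper sidesteps both issues by working with the single function $h_z(t,x):=h_{x,z}(t)$ and showing it is $\Delta_z$--Carath\'eodory in the sense of \cite[Definition 7.1]{FP2016}: measurable in $t$ for each $x$ (immediate, since each $h_{x,z}$ is a Radon--Nikod\'ym derivative), continuous in $x$ for $\Delta$--a.a.\ $t$ (from the multiplicative identity $h_{\ol z,z}=h_{\ol z,x}h_{x,z}$ and the limit \eqref{hlim}, which after the normalization \eqref{hbound} holds for \emph{all} $t$), and dominated via (H4,iii). Then \cite[Lemma 7.2]{FP2016} gives measurability of $t\mapsto h_z(t,\a(t))$ directly; the continuity in $x$ off a single null set is exactly the substitute for your missing uniform-null-set argument. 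I would recommend either adopting that route or supplying the dense-subset transfer argument in full.
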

\begin{proof}
	In order to show that $h(\cdot,\a(\cdot))$ is $\mu_z$--measurable, let us define the map $h_z:[a,b]^2\to[0,+\infty)$ given by $h_z(t,x)=h_{x,z}(t)$. We will first show that $h_z$ is a $\Delta_z$--Carath\'eodory in the sense of \cite[Definition 7.1]{FP2016} adapted to our notation, that is:
	\begin{enumerate}
		\item[\textup{(i)}] for every $x\in[a,b]$, $h_z(\cdot,x)$ is $\Delta$--measurable;
		\item[\textup{(ii)}] for $\Delta$--a.a. $t\in[a,b]$, $h_z(t,\cdot)$ is continuous on $[a,b]$;
		\item[\textup{(iii)}] for every $r>0$, there exists $f_r\in \mathcal L^1_{\Delta_z}([a,b))$ such that
		\[|h_z(t,x)|\le f_r(t)\quad \mbox{for $\Delta$--a.a. }t\in[a,b),\mbox{ and for all }x\in X, |x|\le r.\]
	\end{enumerate}
	Note that condition (i) is trivial as, by definition, $h_z(\cdot,x)=h_{x,z}(\cdot)$ is $\Delta$--measurable. As for condition (ii), let $x\in[a,b]$. It follows directly from~\eqref{hlim} that
	\[\lim_{\ol z\to x}h_{\ol z,z}(t)=\lim_{\ol z\to x}h_{\ol z,x}(t)h_{x,z}(t)=h_{x,z}(t),\quad \mbox{for }\Delta\mbox{--a.a. }t\in[a,b],\]
	that is, $h_z(t,\cdot)$ is continuous on $[a,b]$ for $\Delta$--a.a. $t\in[a,b].$
	Finally, (H4, iii) guarantees the existence of $M>0$ such that $|\gamma(x,z)|<M$ for all $x\in [a,b]$. Hence, it follows from~\eqref{hbound} that
	\[|h_z(t,x)|\le M,\quad \mbox{for $\Delta$--a.a. }t\in[a,b],\mbox{ for all }x\in X,\ |x|\leq r.\]
	Thus, (iii) holds, i.e., the map $h_z$ is $\Delta_z$--Carath\'eodory. Now, as it is shown in \cite[Lemma 7.2]{FP2016}, the composition of a $\Delta_z$--Carath\'eodory function with a Borel measurable function is $\Delta$--measurable, and so the result follows.
\end{proof}

\begin{dfn}\label{dmeasure}
	Let $\a:([a,b],\cB)\to([a,b],\cB)$ be a measurable map and $z\in[a,b]$. Consider the map $\mu_\a:\mathcal M\to [0,+\infty)$ given by
	\[\mu_\a(A)=\int_A h_{\a(t),z}(t)\dif\mu_z(t),\quad A\in\mathcal M.\]
	The map $\mu_{\a}$ is a measure, \cite[Theorem 1.29]{rudin}, and it will received the name of \emph{$\Delta_{\a}$--measure}. In particular, when $\a$ is the identity map, it will be called the \emph{$\Delta$--measure}, and it will be denoted by $\mu\equiv \mu_{\Id}$.
\end{dfn}
The following result shows that $\mu_{\a}$ does not depend on a specific point of $[a,b]$, as we intended. 

\begin{pro}
	Let $\a:([a,b],\cB)\to([a,b],\cB)$ be a measurable map. Then the map $\mu_\a:\mathcal M\to [0,+\infty]$ in Definition~\ref{dmeasure}
	is independent of the choice of $z\in[a,b]$.
\end{pro}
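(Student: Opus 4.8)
The claim is that $\mu_\a(A) = \int_A h_{\a(t),z}(t)\,\dif\mu_z(t)$ does not depend on the choice of $z \in [a,b]$. The natural strategy is to fix two points $z, \bar z \in [a,b]$, write down the two candidate definitions, and use the already-established cocycle identity $h_{\a(t),z}(t) = h_{\a(t),\bar z}(t)\, h_{\bar z, z}(t)$ ($\Delta$--a.a. $t$) together with the Radon--Nikod\'ym relation $\mu_{\bar z}(A) = \int_A h_{\bar z, z}\,\dif\mu_z$ from~\eqref{hzzdef}. The key input is the change-of-variables / chain rule for Radon--Nikod\'ym derivatives: for a nonnegative $\Delta$--measurable $f$, $\int_A f\,\dif\mu_{\bar z} = \int_A f\, h_{\bar z, z}\,\dif\mu_z$. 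This is standard (it follows from the simple-function case and monotone convergence, and is exactly the content of \cite[Theorem 1.29]{rudin} already invoked in Definition~\ref{dmeasure}), so I would cite it rather than reprove it.

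First I would fix $z, \bar z \in [a,b]$ and denote by $\mu_\a^z$ and $\mu_\a^{\bar z}$ the two measures built from $z$ and $\bar z$ respectively, i.e. $\mu_\a^z(A) = \int_A h_{\a(t),z}(t)\,\dif\mu_z(t)$ and $\mu_\a^{\bar z}(A) = \int_A h_{\a(t),\bar z}(t)\,\dif\mu_{\bar z}(t)$. Applying the chain rule quoted above to $f(t) = h_{\a(t),\bar z}(t)$ (which is $\Delta$--measurable by the previous proposition, hence $\mu_z$-- and $\mu_{\bar z}$--measurable) gives
\[
\mu_\a^{\bar z}(A) = \int_A h_{\a(t),\bar z}(t)\,\dif\mu_{\bar z}(t) = \int_A h_{\a(t),\bar z}(t)\, h_{\bar z, z}(t)\,\dif\mu_z(t).
\]
Then the cocycle identity $h_{\a(t),\bar z}(t)\, h_{\bar z, z}(t) = h_{\a(t), z}(t)$ for $\Delta$--a.a.\ $t$ — which is the instance of $h_{\widetilde z, z}(t) = h_{\widetilde z, \bar z}(t)\, h_{\bar z, z}(t)$ established right after~\eqref{hzzdef}, with $\widetilde z = \a(t)$ — lets me replace the integrand, so $\mu_\a^{\bar z}(A) = \int_A h_{\a(t),z}(t)\,\dif\mu_z(t) = \mu_\a^z(A)$ for every $A \in \cM$.

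One small technical point to be careful about: the cocycle identity $h_{\widetilde z, z}(t) = h_{\widetilde z, \bar z}(t)\, h_{\bar z, z}(t)$ holds for each fixed $\widetilde z$ only for $\Delta$--a.a.\ $t$, and here $\widetilde z = \a(t)$ varies with $t$, so I should make sure the exceptional null set can be chosen uniformly. This is exactly why the preceding proposition went through the Carath\'eodory machinery: the function $h_z(t,x) = h_{x,z}(t)$ is $\Delta_z$--Carath\'eodory, and the identity $h_z(t,x) = h_{\bar z}(t,x)\, h_{\bar z, z}(t)$ holds on a single common co-null set in $t$ for all $x$ simultaneously (it is the continuity-in-$x$ argument from~\eqref{hlim} combined with the a.e.\ relation), so composing with the Borel map $\a$ preserves the identity $\Delta$--a.e. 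I expect this uniformity-of-the-null-set issue to be the only real obstacle; everything else is a one-line application of the Radon--Nikod\'ym chain rule. I would state the argument in three or four sentences and close with the observation that since $z, \bar z$ were arbitrary, $\mu_\a$ is independent of the base point, whence the notation $\mu_\a$ (and in particular $\mu = \mu_{\Id}$) is unambiguous.
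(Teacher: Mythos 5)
Your proposal is correct and follows essentially the same route as the paper: fix $z,\ol z$, apply the Radon--Nikod\'ym chain rule $\int_A f\,\dif\mu_{\ol z}=\int_A f\,h_{\ol z,z}\,\dif\mu_z$ to the integrand, and conclude via the cocycle identity $h_{\a(s),\ol z}(s)=h_{\a(s),z}(s)\,h_{z,\ol z}(s)$ holding $\Delta$--a.e. Your remark about choosing the exceptional null set uniformly in the second index (since $\a(t)$ varies with $t$) is a legitimate point of care that the paper's one-line proof passes over silently, but it does not change the argument.
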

\begin{proof}	
	Let $z,\ol z\in[a,b]$, $z\not=\ol z$. Then, we have that $h_{\a(s),\ol z}(s)=h_{\a(s),z}(s)h_{z,\ol z}(s)$ for $\Delta$--a.a. $s\in[a,b]$ and so
	\[\int_A h_{\a(s),z}(s)\dif \mu_z(s)=\int_A h_{\a(s),z}(s)h_{z,\ol z}(s)\dif \mu_{\ol z}(s)=\int_Ah_{\a(s),\ol z}(s) \dif\mu_{\ol z}(s).\qedhere\]
\end{proof}

Observe that the notation we have used so far is consistent with the definition of the $\Delta$--measure $\mu$. Indeed, for example, $f$ is \emph{$\Delta$--measurable} if and only if it is \emph{$\mu$--measurable}; as $\mu$ and $\mu_z$, $z\in[a,b]$ are both defined, after due restriction of $\mu_z$ to $\cM$, over the same $\sigma$--algebra. Also, by definition, we have that $\mu\ll \mu_z$. The converse is also true thanks to (H4, iii). Indeed, for $z\in[a,b]$, there exists $K>0$ such that $|\gamma(z,x)|<K$ for all $x\in[a,b]$. Hence,~\eqref{hbound} implies that
\[\mu(A)=\int_A h_{\a(s),z}(s)\dif\mu_z(s)\ge \int_A\frac{1}{\gamma(z,\a(s))}\dif \mu_z(s)\ge\int_A\frac{1}{K}\dif \mu_z(s)=\frac{\mu_z(A)}{K}\ge 0.\]
Thus, if $\mu(A)=0$ then $\mu_z(A)=0$, i.e., $\mu_z\ll\mu$ for all $z\in[a,b]$. Therefore, a property holds \emph{$\Delta$--everywhere} if and only if it holds \emph{$\mu$--everywhere}.

As a final comment, note that $\mu:\mathcal M\to[0,+\infty]$ is a Borel measure that assigns finite measure to bounded sets. As it can be seen in \cite[Chapter 1, Section 3, Subsection 2]{athreya}, this means that it can be thought of as a Lebesgue--Stieltjes measure, $\mu_g$, given by 
\begin{equation}\label{gmeas}
\mu_g([c,d))=g(d)-g(c),\quad c,d\in[a,b],\ c<d,
\end{equation}
for the nondecreasing and left--continuous function $g:[a,b]\to\bR$ defined as \begin{equation}\label{gfunc}
g(a)=0,\quad g(t)=\mu([a,t)).
\end{equation}

\begin{dfn}
	Let $X\subset[a,b]$, $X\subset \mathcal M$. 	
	We define the \emph{integral of a $\mu_\a$--measurable function $f$ over $X$ with respect to the path of $\Delta$--measures $\a$ }as
	\[\int_X f\dif\mu_\a:=\int_X f(t)h_{\a(t),z}(t)\dif\mu_z(t),\]
	provided the integral exists. This definition does not depend on the $z$ chosen.
	As usual, we define the set of \emph{$\mu_\a$--integrable functions} on $X$ as
	\[\mathcal L^1_{\mu_\a}(X):=\left\{f:X\to\bR: f\mbox{ is }\mu_\a\mbox{--measurable},\ \int_X |f|\dif \mu_\a<+\infty \right\}.\]
\end{dfn}

Now, if we consider the restrictions of $\mu_z$, $z\in[a,b]$, to $\mathcal M$, we can define the set of \emph{$\Delta$--integrable functions over $X\in\mathcal M$} as
\[\mathcal L_\Delta^1(X):=\left\{f:X\to\bR: f\mbox{ is }\Delta\mbox{--measurable},\, \int_X |f|\dif\mu_z<+\infty,\mbox{ for all } z\in[a,b]\right\}.\]
We now study the relationship between $\mathcal L_\Delta^1(X)$ and $\mathcal L_\mu^1(X)$. First of all, recall that, in this framework, $\mu$ and $\mu_z$ are defined over the same $\sigma$--algebra $\mathcal M$, so the concepts of $\mu$--measurable and $\Delta$--measurable are equivalent. Let $f\in\mathcal L_\Delta^1(X)$ and $z\in[a,b]$. Hypothesis (H4, iii) implies that there exist $M>0$ such that $|\gamma(x,z)|<M$ for all $x\in[a,b]$. Thus, using~\eqref{hbound} we have
\[
\int_{X}|f|\dif \mu=\int|f(s)h_{s,z}(s)|\dif\mu_z(s)\le\int|f(s)\gamma(s,z)|\dif\mu_z(s)\le
M\int|f(s)|\dif\mu_z(s)<+\infty,\]
that is, $f\in\mathcal L_\mu^1(X)$. Conversely, let $f\in\mathcal L_\mu^1(X)$ and $z\in[a,b]$. Again, hypothesis (H4, iii) implies that there exists $K>0$ such that $|\gamma(z,x)|<K$ for all $x\in[a,b]$. Hence
\[
\int_{X}|f|\dif \mu=\int|f(s)h_{s,z}(s)|\dif\mu_z(s)\ge\int\left|\frac{f(s)}{\gamma(z,s)}\right|\dif\mu_z(s)\ge
\frac{1}{K}\int|f(s)|\dif\mu_z(s),\]
so $\int_X|f|\dif\mu_z<+\infty$. Since $z\in[a,b]$ was arbitrary, we have that $f\in \mathcal L_\Delta^1(X)$. That is, $\mathcal L_\mu^1(X)=\mathcal L_\Delta^1(X)$.

Finally, we study the behavior of $\mu$ over some interesting sets related to the map $\Delta$. These sets will be fundamental in the definition of the $\Delta$--derivative. Let us define the sets $C_\Delta$ and $D_\Delta$ as
\begin{align}\label{cdelta}
C_\Delta:= &\{x\in [a,b]:\Delta(x,\cdot)=0\mbox{ in }(x-\e,x+\e)\mbox{ for some }\e\in\bR^+\},\\\label{ddelta}
D_\Delta:= &\{x\in[a,b]: \Delta(x,x^+)\not=0\}.\notag
\end{align}

Note that $C_\Delta$ is, by definition, an open set in the usual topology of $[a,b]$. Therefore, it can be rewritten uniquely as the disjoint countable union of open intervals, say $C_\Delta=\bigcup_{n\in\bN} (a_n,b_n)$. We define $N_\Delta$ as
\begin{equation}\label{ndelta}
N_\Delta:=\{a_n,b_n:n\in\bN\}\setminus D_\Delta.
\end{equation}

\begin{pro}\label{cdrel}
	Let $g$ be as in~\eqref{gfunc} and let $C_g$ and $D_g$ be as in \textup{\cite{PoRo}}, that is,
	\begin{align*}C_g:= & \{x\in[a,b]: g\mbox{ is constant on }(x-\e,x+\e)\mbox{ for some }\e\in\bR^+\},\\ D_g:= & \{x\in[a,b]:g(x^+)-g(x)>0\}.\end{align*}
	Then $C_\Delta=C_g$ and $D_\Delta=D_g$ for the Stieltjes displacement given by $g$.
\end{pro}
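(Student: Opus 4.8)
The plan is to translate both set equalities into statements about the local Lebesgue--Stieltjes measures $\mu_x$ and the global measure $\mu=\mu_g$, and then to exploit the mutual absolute continuity of these measures established above.

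First I would set up the dictionary between $\Delta$ and $\mu_x$. By (H3) the map $\Delta_x$ is nondecreasing, by Proposition~\ref{plc} it is left--continuous (and (H5) already gives this at the point $x$ itself), and $\Delta_x(x)=0$ by (H1); hence its associated Lebesgue--Stieltjes measure satisfies $\mu_x([s,t))=\Delta_x(t)-\Delta_x(s)$ for $s<t$. Two consequences: $\mu_x(\{x\})=\Delta_x(x^+)-\Delta_x(x)=\Delta(x,x^+)$; and, for $\varepsilon>0$, $\Delta(x,\cdot)$ vanishes on $(x-\varepsilon,x+\varepsilon)\cap[a,b]$ if and only if $\mu_x((x-\varepsilon,x+\varepsilon)\cap[a,b])=0$ — one implication is immediate, and for the other, $0\le\Delta_x(t)-\Delta_x(s)\le\mu_x((x-\varepsilon,x+\varepsilon))=0$ forces $\Delta_x$ to be constant on the interval, hence identically zero there because $\Delta_x(x)=0$. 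The analogous identities for $g$ and $\mu=\mu_g$ are contained in~\eqref{gmeas}--\eqref{gfunc}: $\mu(\{x\})=g(x^+)-g(x)$, and $g$ is constant on $(x-\varepsilon,x+\varepsilon)$ iff $\mu((x-\varepsilon,x+\varepsilon))=0$.

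Next I would invoke $\mu_x\ll\mu\ll\mu_x$ — proved above from (H4, iii) together with the density bounds~\eqref{hbound}, equivalently from Remark~\ref{hmbound} — which says that a Borel subset of $[a,b]$ is $\mu$--null exactly when it is $\mu_x$--null. Applied to the singleton $\{x\}$ this yields $D_\Delta=D_g$, via $x\in D_\Delta\iff\Delta(x,x^+)\ne0\iff\mu_x(\{x\})>0\iff\mu(\{x\})>0\iff g(x^+)-g(x)>0\iff x\in D_g$, where the replacement of ``$\ne0$'' by ``$>0$'' uses monotonicity of $\Delta_x$ and of $g$. Applied to the intervals $(x-\varepsilon,x+\varepsilon)$ it yields $C_\Delta=C_g$: $x\in C_\Delta$ iff $\mu_x((x-\varepsilon,x+\varepsilon))=0$ for some $\varepsilon>0$, iff $\mu((x-\varepsilon,x+\varepsilon))=0$ for some $\varepsilon>0$, iff $g$ is constant on some $(x-\varepsilon,x+\varepsilon)$, iff $x\in C_g$; the passage between ``$\Delta_x$ (resp.\ $g$) vanishes/is constant on the interval'' and ``the interval is $\mu_x$-- (resp.\ $\mu$--) null'' is just continuity of the measure from below along the exhausting sequence $[x-\varepsilon+1/n,x+\varepsilon-1/n)$.

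There is no genuine difficulty here — the whole argument is bookkeeping — but two points deserve care: identifying the jump $\Delta(x,x^+)$ with the atom $\mu_x(\{x\})$ is precisely where left--continuity (H5) enters, and the description of $C_\Delta$ through $\mu_x$--null neighbourhoods rests on (H3) together with $\Delta_x(x)=0$. Finally, when $\Delta$ is itself the Stieltjes displacement $\Delta(x,y)=g(y)-g(x)$ the whole dictionary is tautological, since then $\mu_x=\mu_g$ for every $x$ and $\Delta(x,\cdot)=g(\cdot)-g(x)$, so $C_\Delta=C_g$ and $D_\Delta=D_g$ reduce to direct inspection.
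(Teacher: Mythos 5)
Your proposal is correct and follows essentially the same route as the paper: both arguments translate the conditions defining $C_\Delta$, $D_\Delta$, $C_g$, $D_g$ into statements about the measures $\mu_x$ and $\mu=\mu_g$ (the jump $\Delta(x,x^+)$ as the atom $\mu_x(\{x\})$, constancy as nullity of the surrounding interval) and then transfer nullity between $\mu_x$ and $\mu$. The only cosmetic difference is that you invoke the mutual absolute continuity $\mu\ll\mu_x\ll\mu$ as a black box, whereas the paper's proof re-derives the needed inequalities directly from the two-sided bounds $m_t\le h_{\cdot,t}\le M_t$ of Remark~\ref{hmbound} inside the integrals $\int_{[r,s)}h_{x,t}(x)\,\dif\mu_t(x)$ — the same information in a different package.
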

\begin{proof}
	For the equality $D_\Delta=D_g$, it is enough to note that for any $t\in[a,b]$ we have that
	\[g(t^+)-g(t)=\lim_{s\to t^+}\int_{[t,s)}h_{r,t}(r)\dif\mu_t(r)=\int_{\{t\}}h_{r,t}(r)\dif\mu_t(r)=h_{t,t}(t)(\Delta_t(t^+)-\Delta_t(t))=\Delta_t(t^+).\]

	Now, in order to see that $C_\Delta=C_g$, let $t\in C_\Delta$. Then $\Delta_t(\cdot)=0$ on $(t-\e,t+\e)$ for some $\e\in\bR^+$. Let $r,s\in(t-\e,t+\e)$, $r<s$. Then, by Remark~\ref{hmbound}, we have that
	\[0\le g(s)-g(r)=\mu([r,s))=\int_{[r,s)}h_{x,t}(x)\dif\mu_t(x)\le M_t\mu_t([r,s))=0,\]
	since $[r,s)\subset (t-\e,t+\e)$. Thus $g$ is constant on $(t-\e,t+\e)$. Conversely, if $t\in C_g$, then $g$ is constant on $(t-\e,t+\e)$ for some $\e\in\bR^+$. Let $r,s\in(t-\e,t+\e)$, $r<s$. Then, Remark~\ref{hmbound} implies that
	\[0\le m_t(\Delta_t(s)-\Delta_t(r))=m_t\mu_t([r,s))\le \int_{[r,s)}h_{x,t}(x)\dif\mu_t(x)=g(s)-g(r)=0.\]
	That is, $\Delta_t$ is constant on $(t-\e,t+\e)$, and since $\Delta_t(t)=0$, it follows that $t\in D_\Delta$.
\end{proof}

The first consequence of Proposition~\ref{cdrel} is that $D_\Delta$ is at most countable since it is the set of discontinuities of a monotone function. Further properties can be obtain from Propositions 2.5 and 2.6 in \cite{PoRo}.
\begin{cor}\label{corcdrel}
	Let $C_\Delta$ and $N_\Delta$ be as in~\eqref{cdelta} and~\eqref{ndelta}, respectively. Then $\mu(C_\Delta)=\mu(N_\Delta)=0$.
\end{cor}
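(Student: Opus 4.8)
The goal is to show $\mu(C_\Delta) = \mu(N_\Delta) = 0$, where $\mu$ is the $\Delta$-measure (equivalently, the Lebesgue--Stieltjes measure $\mu_g$ associated with $g$ from~\eqref{gfunc}). The plan is to transfer everything to the Stieltjes setting via Proposition~\ref{cdrel} and then invoke the known results for $g$-measures from \cite{PoRo}.

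\textbf{Step 1: Reduce $\mu(C_\Delta)=0$ to a statement about $g$.} By Proposition~\ref{cdrel}, $C_\Delta = C_g$ for the Stieltjes displacement given by $g$, i.e.\ $C_\Delta$ is exactly the set of points around which $g$ is locally constant. Since $\mu = \mu_g$ (as noted after~\eqref{gmeas}), we have $\mu(C_\Delta) = \mu_g(C_g)$. Writing $C_\Delta = \bigcup_{n\in\bN}(a_n,b_n)$ as the disjoint countable union of its connected components, $g$ is constant on each $(a_n,b_n)$, hence $\mu_g((a_n,b_n)) = \lim_{s\to b_n^-}g(s) - \lim_{t\to a_n^+}g(t) = 0$ by left-continuity and local constancy. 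Countable additivity then gives $\mu(C_\Delta) = \sum_{n}\mu_g((a_n,b_n)) = 0$. (This is exactly Proposition~2.5 of \cite{PoRo}, which the text already invokes.)

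\textbf{Step 2: Handle $N_\Delta$.} Recall $N_\Delta = \{a_n, b_n : n\in\bN\}\setminus D_\Delta$ is a countable set. So it suffices to show $\mu(\{x\}) = 0$ for each $x\in N_\Delta$. For a Lebesgue--Stieltjes measure, $\mu_g(\{x\}) = g(x^+) - g(x)$, which by the first displayed computation in the proof of Proposition~\ref{cdrel} equals $\Delta_x(x^+) = \Delta(x, x^+)$. Since $x\notin D_\Delta$, by definition~\eqref{ddelta} we have $\Delta(x,x^+) = 0$, hence $\mu(\{x\}) = 0$. Summing over the countably many points of $N_\Delta$ yields $\mu(N_\Delta) = 0$. (This mirrors Proposition~2.6 of \cite{PoRo}.)

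\textbf{Main obstacle.} There is no serious obstacle here: the proposition is essentially a corollary, and the real work was done in Proposition~\ref{cdrel} and the construction of $g$. The only point requiring a little care is making sure the left-endpoints $a_n$ are treated correctly—one must be slightly careful that $\mu_g(\{a_n\})$ could in principle be nonzero if $a_n\in D_g = D_\Delta$, but such points are precisely the ones excluded from $N_\Delta$ by~\eqref{ndelta}, so the definition of $N_\Delta$ is exactly engineered to make the argument go through. I would write the proof in two or three lines citing Proposition~\ref{cdrel} and Propositions~2.5--2.6 of \cite{PoRo}, possibly including the one-line computation $\mu(\{x\}) = \Delta(x,x^+) = 0$ for $x\in N_\Delta$ for completeness.
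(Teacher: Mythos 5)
Your proposal is correct and follows essentially the same route as the paper, which leaves the proof implicit: it deduces the corollary from the identifications $C_\Delta=C_g$, $D_\Delta=D_g$ of Proposition~\ref{cdrel} together with Propositions 2.5--2.6 of \cite{PoRo}, exactly as you do. Your added details (constancy of $g$ on each component of $C_\Delta$, the computation $\mu(\{x\})=g(x^+)-g(x)=\Delta(x,x^+)=0$ for $x\notin D_\Delta$, and the remark about why $N_\Delta$ excludes $D_\Delta$) are accurate and merely make explicit what the cited propositions supply.
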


\begin{rem}\label{nullm}
	As a consequence of Corollary~\ref{corcdrel}, a property holds $\mu$--a.e. in $E$ if it holds on $E\setminus O_\Delta$ with \[O_\Delta:=C_\Delta\cup N_\Delta.\] Moreover, note that, if $x\not\in O_\Delta$, then $\Delta_z(y)\not=\Delta_z(x)$, for all $y,z\in[a,b]$, $x\not= y$.
\end{rem}

\section{Displacement derivatives}\label{derivatives}
We now introduce the concept of displacement derivative of a function defined over a compact interval endowed with a displacement structure in the real line equipped with the usual topology. We chose this setting because some nice properties, such as the linearity of the derivative, are quite helpful in order to study the relationship between the displacement derivative and its integral.
\begin{dfn}\label{deriv} 
	Let $([a,b],\le,\Delta)$ satisfy \textup{(H1)--(H5)}.
	The \emph{derivative with respect to the displacement $\Delta$} (or \emph{$\Delta$--derivative}) of a function $f:[a,b]\to \bR$ at a point $x\in [a,b]\setminus O_{\Delta}$ is defined as follows, provided that the corresponding limits exist:
\begin{align*} f^{\Delta}(x) =\begin{dcases}\lim_{y \to x}\frac{f(y)-f(x)}{\Delta(x,y)}, & x\not\in D_{\Delta},\\\lim_{y \to x^+}\frac{f(y)-f(x)}{\Delta(x,y)}, & x\in D_{\Delta}.\end{dcases}
\end{align*}
\end{dfn}

Observe that this definition does not require $\Delta$ to be symmetric. Furthermore, this definition is a more general setting than $g$--derivatives (and therefore time--scales, as pointed out in \cite{PoRo}).

Finally, one might think that the natural choice for the definition of the derivative would be by taking the limit in the $\tau_\Delta$ topology. However, if $x\not\in D_\Delta$, $x$ is a continuity point of $\Delta_x$, and it is easy to see that such limit can be translated into a limit in the usual topology, which is far more convenient for the theory that follows. It is at this point that the importance of (H1) arises as commented in Remark~\ref{h1r}. Without this hypothesis we would not be able to assure that the balls of center $x$ and any radii are nonempty, so considering the $\tau_\Delta$ limit might not be well--defined.


\subsection{Fundamental Theorem of Calculus}
In this section we will make explicit the relationship between the $\Delta$--derivative of $f$ and its integral with respect to the $\Delta$--measure. In particular, our first goal now is to show that, for $f\in\mathcal L_{\mu}^1([a,b])$ and $F(x)=\int_{[a,x)} f(s)\dif\mu$, the equality $F^{\Delta}(x)=f(x)$ holds for $\mu$--a.a. $x\in[a,b]$. In the particular setting of Stieltjes derivatives, this result has been proven in different ways and can be found in \cite{Daniell1918} and, more extensively, in \cite{Garg1992}.

In order to do so, we will follow an approach similar to that of \cite{PoRo}, starting by guaranteeing the differentiability of monotone functions. For that matter, we will use the following two results that are direct consequences of Lemmas 4.2 and 4.3 in \cite{PoRo} adapted to our framework.
\begin{pro}\label{lem1}
	Let $([a,b],\le,\Delta)$ satisfy \textup{(H1)--(H5)}, $\a>0$, $z\in[a,b]$, $f:[a,b]\to\bR$, $P=\{x_0,x_1,\dots, x_n\}$ be a partition of $[a,b]$ and $S$ be a nonempty subset of $\{1,2,\dots,n\}$. If $f(a)\le f(b)$ and
	\[\frac{f(x_k)-f(x_{k-1})}{\Delta_z(x_k)-\Delta_z(x_{k-1})}<-\a\quad\mbox{for each }k\in S,\]
	then
	\[\sum_{k=1}^n |f(x_k)-f(x_{k-1})|>|f(b)-f(a)|+\a L\]
	where $L=\sum_{k\in S}(\Delta_z(x_k)-\Delta_z(x_{k-1}))$. The same result is true if $f(a)\ge f(b)$ and 
	\[\frac{f(x_k)-f(x_{k-1})}{\Delta_z(x_k)-\Delta_z(x_{k-1})}>\a\quad\mbox{for each }k\in S.\]
\end{pro}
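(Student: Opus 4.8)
The plan is to reduce the statement to an elementary inequality about finite sequences of real numbers, which is precisely the content of Lemma~4.2 in \cite{PoRo} once one recognises that $\Delta_z$ here plays the role of the function $g$ there. Since, by (H3), $\Delta_z$ is nondecreasing on $[a,b]$ and, by Proposition~\ref{plc}, it is left--continuous, it is a legitimate ``$g$'' in the sense of \cite{PoRo}; in fact the argument below is purely combinatorial and uses only the monotonicity of $\Delta_z$, so no appeal to the measure--theoretic machinery is needed.

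Write $d_k:=f(x_k)-f(x_{k-1})$ and $g_k:=\Delta_z(x_k)-\Delta_z(x_{k-1})$ for $k=1,\dots,n$. First I would observe that for each $k\in S$ the quotient $d_k/g_k$ is assumed to exist and to be $<-\alpha<0$, so $g_k\neq 0$; since (H3) gives $g_k\ge 0$, in fact $g_k>0$, and hence $d_k<-\alpha g_k<0$. In particular $L=\sum_{k\in S}g_k>0$, and, because $S\neq\emptyset$ and every inequality is strict, $\sum_{k\in S}(-d_k)>\alpha\sum_{k\in S}g_k=\alpha L$.

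Next I would use the elementary inequality, valid for any reals with $d_k\le 0$ for $k\in S$,
\[\sum_{k=1}^n|d_k|=\sum_{k\notin S}|d_k|-\sum_{k\in S}d_k\ge\sum_{k\notin S}d_k-\sum_{k\in S}d_k=\sum_{k=1}^n d_k-2\sum_{k\in S}d_k.\]
Since $\sum_{k=1}^n d_k=f(b)-f(a)$ and, in the first case, $f(a)\le f(b)$, so that $f(b)-f(a)=|f(b)-f(a)|\ge 0$, combining with the previous step yields
\[\sum_{k=1}^n|f(x_k)-f(x_{k-1})|\ge |f(b)-f(a)|+2\sum_{k\in S}(-d_k)>|f(b)-f(a)|+2\alpha L\ge|f(b)-f(a)|+\alpha L,\]
which is the desired conclusion. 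Finally, the case $f(a)\ge f(b)$ with $d_k/g_k>\alpha$ for $k\in S$ follows by applying the first case to $-f$: indeed $(-f)(a)\le(-f)(b)$ and $\big((-f)(x_k)-(-f)(x_{k-1})\big)/g_k=-d_k/g_k<-\alpha$ for $k\in S$, and the conclusion obtained for $-f$ is literally the conclusion for $f$.

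I do not expect a genuine obstacle here. The only point requiring care is the bookkeeping that turns the hypothesis on the quotients into the bound $\sum_{k\in S}(-d_k)>\alpha L$, which is exactly where monotonicity of $\Delta_z$ (i.e. (H3)) enters, guaranteeing $g_k>0$ for $k\in S$ and hence $L>0$; beyond that, the argument in fact produces $2\alpha L$, so the stated bound $\alpha L$ holds as well.
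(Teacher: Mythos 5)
Your proof is correct and matches the paper's intent: the paper offers no argument for this proposition beyond observing that it is Lemma 4.2 of \cite{PoRo} applied to the nondecreasing (by (H3)) left--continuous function $\Delta_z$, which is exactly the reduction you make at the outset. Your self-contained verification is also sound --- the existence of the quotients together with monotonicity of $\Delta_z$ forces $g_k>0$ and $d_k<-\alpha g_k$ for $k\in S$, the triangle-inequality bookkeeping and the reduction of the second case to the first via $-f$ are all valid --- and it in fact yields the slightly sharper bound $|f(b)-f(a)|+2\alpha L$.
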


\begin{pro}\label{lem2}
 Let $([a,b],\le,\Delta)$ satisfy \textup{(H1)--(H5)} and $H\subset (a,b)$ be such that for a given $z\in[a,b]$, there is $\e_z\in\bR^+$ such that $\mu_z^*(H)=\e_z$. Then
 \begin{enumerate}
 	\item If $\mathcal I$ is any collection of open subintervals of $[a,b]$ that covers $H$, then there exists a finite disjoint collection $\{ I_1,I_2,\dots,I_N\}$ of $\mathcal I$ such that
 	\[\sum_{k=1}^N\mu_z(I_k)>\frac{\e_z}{3}.\]
 	\item If $P$ is a finite subset of $[a,b]\setminus D_\Delta$ and $\mathcal I$ is any collection of open subintervals of $[a,b]$ that covers $H\setminus P$, then there exists a finite disjoint collection $\{ I_1,I_2,\dots,I_N\}$ of $\mathcal I$ such that
 	\[\sum_{k=1}^N\mu_z(I_k)>\frac{\e_z}{4}.\]
 \end{enumerate}
\end{pro}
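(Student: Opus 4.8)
The plan is to deduce Proposition~\ref{lem2} from the corresponding statements for $\mu_z$ in the Stieltjes setting (Lemmas 4.2 and 4.3 of \cite{PoRo}) by exploiting the fact, established in Section~\ref{measure}, that each $\mu_z$ is itself a Lebesgue--Stieltjes measure associated with the nondecreasing left--continuous function $\Delta_z$. Concretely, fix $z\in[a,b]$ as in the statement. Since $\Delta_z$ satisfies the hypotheses of \cite{PoRo} (it is nondecreasing and, by Proposition~\ref{plc}, left--continuous everywhere), and $\mu_z$ is exactly the measure ``$\mu_g$'' in the notation of \cite{PoRo} for $g=\Delta_z$, every statement about outer measures, Vitali--type covering selections, and sets of discontinuities that holds for $\mu_g$ in \cite{PoRo} applies verbatim to $\mu_z$. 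The only genuine point to check is that the exceptional set $D_\Delta$ appearing in part (2) coincides, up to what matters, with the set $D_{\Delta_z}$ of discontinuity points of $\Delta_z$; but this is precisely what Proposition~\ref{cdrel} gives, since $D_\Delta=D_g$ with $g$ as in~\eqref{gfunc}, and the jump-set of $\Delta_z$ at a point $t$ is $\Delta_z(t^+)-\Delta_z(t)=h_{t,z}(t)\,\Delta_t(t^+)$ up to the strictly positive factor $h_{t,z}(t)$ (cf.\ Remark~\ref{hmbound}), so $t\in D_{\Delta_z}$ iff $\Delta_t(t^+)\neq 0$ iff $t\in D_\Delta$.

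With this identification in hand, part (1) is immediate: $H\subset(a,b)$ has $\mu_z^*(H)=\e_z>0$, $\mathcal I$ is an open cover of $H$, and Lemma 4.3 of \cite{PoRo} applied to the measure $\mu_z$ (equivalently $\mu_g$ for $g=\Delta_z$) yields a finite disjoint subcollection $\{I_1,\dots,I_N\}\subset\mathcal I$ with $\sum_{k=1}^N\mu_z(I_k)>\e_z/3$. This is a direct quotation of the cited result, so no new argument is needed.

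For part (2), the set $P$ is assumed finite and contained in $[a,b]\setminus D_\Delta=[a,b]\setminus D_{\Delta_z}$, i.e.\ $P$ consists of continuity points of $\Delta_z$. Hence $\mu_z(\{p\})=\Delta_z(p^+)-\Delta_z(p)=0$ for each $p\in P$, so removing $P$ does not change the outer measure: $\mu_z^*(H\setminus P)=\mu_z^*(H)=\e_z$. At this stage one can either invoke Lemma 4.2 of \cite{PoRo} (whose statement already isolates the role of finitely many continuity points and produces the constant $1/4$), applied to $\mu_z$, to get the disjoint subcollection $\{I_1,\dots,I_N\}\subset\mathcal I$ covering $H\setminus P$ with $\sum_{k=1}^N\mu_z(I_k)>\e_z/4$; or one can run the following short reduction to part (1): since $P$ is finite, cover $P$ by finitely many tiny open intervals of total $\mu_z$--measure less than $\e_z/12$ (possible because $\mu_z$ assigns measure zero to each singleton in $P$ and is a Borel measure, hence outer regular at points); adjoining these to the cover of $H\setminus P$ gives an open cover of $H$, apply part (1), discard the (few) selected intervals meeting $P$, and bound the discarded mass by $\e_z/12$ to retain $\sum\mu_z(I_k)>\e_z/3-\e_z/12=\e_z/4$. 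Either route works; the first is cleaner if Lemma 4.2 of \cite{PoRo} is stated in the required generality.

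The main obstacle, such as it is, is purely bookkeeping: making sure that the hypotheses ``$([a,b],\le,\Delta)$ satisfies (H1)--(H5)'' really do hand us, for each individual $z$, a genuine Lebesgue--Stieltjes structure to which \cite{PoRo} applies, and that the privileged set $D_\Delta$ in our formulation matches the discontinuity set used there. Both of these have already been arranged in Section~\ref{measure} --- left--continuity everywhere comes from Proposition~\ref{plc}, the measure $\mu_z$ being Lebesgue--Stieltjes for $\Delta_z$ is its very definition, and the coincidence $D_\Delta=D_{\Delta_z}$ follows from Proposition~\ref{cdrel} together with the strict positivity of the densities $h_{\cdot,z}$ --- so the proof is essentially a citation with a one-line verification. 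I do not expect any analytic difficulty beyond that.
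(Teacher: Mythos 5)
Your proposal is correct and matches the paper's own treatment: the paper states Propositions~\ref{lem1} and~\ref{lem2} without proof, declaring them ``direct consequences of Lemmas 4.2 and 4.3 in \cite{PoRo} adapted to our framework,'' which is exactly the citation-plus-verification you carry out (left-continuity of $\Delta_z$ everywhere via Proposition~\ref{plc}, $\mu_z$ being the Lebesgue--Stieltjes measure of $\Delta_z$, and the identification of $D_\Delta$ with the discontinuity set of $\Delta_z$). Your extra details, including the alternative reduction of part (2) to part (1), only make explicit the adaptation the paper leaves implicit (modulo the harmless index swap $h_{t,z}$ versus $h_{z,t}$).
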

We can now prove the $\Delta$--differentiability of monotone functions. To do so, we will follow the ideas of \cite{Botsko2003}.
\begin{pro}\label{mondif}
	Let $([a,b],\le,\Delta)$ satisfy \textup{(H1)--(H5)} and let $f:[a,b]\to\bR$ be a nondecreasing function. Then there exists $N\subset [a,b]$ such that $\mu(N)=0$ and
	\[f^\Delta(x)\mbox{ exists for all }x\in[a,b)\setminus N.\]
\end{pro}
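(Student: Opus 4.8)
The plan is to reduce the $\Delta$--differentiability of a nondecreasing $f$ to the classical Stieltjes theory by working, as much as possible, with a single reference point. Fix an arbitrary $z\in[a,b]$; then $\Delta_z$ is nondecreasing and left--continuous, so it is the "$g$" of a Stieltjes displacement and the Lebesgue--Stieltjes measure $\mu_z$ is exactly the $g$--measure for $g=\Delta_z$. By Lemma~4.3 of \cite{PoRo} adapted here (Proposition~\ref{mondif}'s analogue for $g$--derivatives), $f$ is $\Delta_z$--differentiable $\mu_z$--a.e. on $[a,b)$; that is, there is $N_z\subset[a,b)$ with $\mu_z(N_z)=0$ such that the limit defining $f'_{\Delta_z}(x)$ (two--sided off $D_{\Delta_z}$, right--sided on $D_{\Delta_z}$) exists for every $x\in[a,b)\setminus N_z$. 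Since $\mu_z\ll\mu\ll\mu_z$, we have $\mu(N_z)=0$, so it suffices to transfer the existence of the $\Delta_z$--derivative at a point $x\notin N_z\cup O_\Delta$ into existence of the $\Delta$--derivative at $x$.

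The transfer is where hypothesis (H4) does the work. At a point $x\notin O_\Delta$ we may, by Remark~\ref{nullm}, assume $\Delta_z(y)\neq\Delta_z(x)$ for all $y\neq x$, so both difference quotients are well defined near $x$, and we can write
\[
\frac{f(y)-f(x)}{\Delta(x,y)}=\frac{f(y)-f(x)}{\Delta_z(y)-\Delta_z(x)}\cdot\frac{\Delta_z(y)-\Delta_z(x)}{\Delta(x,y)}.
\]
The first factor tends to $f'_{\Delta_z}(x)$ as $y\to x$ (or $y\to x^+$). For the second factor, note $\Delta(x,y)=\Delta_x(y)-\Delta_x(x)$ since $\Delta_x(x)=0$ by (H1), and (H4, i) applied with the pair of points $x,y$ and reference points $x,z$ gives
\[
\frac{1}{\gamma(x,z)}\le\frac{|\Delta_z(y)-\Delta_z(x)|}{|\Delta_x(y)-\Delta_x(x)|}\le\gamma(z,x),
\]
so the ratio stays bounded away from $0$ and $\infty$; moreover by (H4, ii), $\gamma(x,\overline z)\to1$ and $\gamma(\overline z,x)\to1$ as $\overline z\to x$, which pins the ratio down to limit $1$ along $\overline z=y\to x$. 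The cleanest way to see this is to observe that $h_{x,z}$ is (a version of) the Radon--Nikod\'ym density of $\mu_x$ with respect to $\mu_z$, and by~\eqref{hlim} we have $h_{y,z}(\,\cdot\,)\to h_{x,z}(\,\cdot\,)$ pointwise as $y\to x$; combining this with the integral representations $\Delta_z(y)-\Delta_z(x)=\mu_z([x,y))$, $\Delta_x(y)-\Delta_x(x)=\mu_x([x,y))=\int_{[x,y)}h_{r,x}(r)\,\dif\mu_x(r)$ and a Lebesgue--point / continuity argument shows the second factor tends to $1/h_{x,z}(x)\cdot(\text{something})$; after bookkeeping with $h_{x,x}=1$ the product collapses and one gets $f^\Delta(x)=f'_{\Delta_z}(x)$ whenever the latter exists. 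The sets $D_\Delta$ and $D_{\Delta_z}$ agree (Proposition~\ref{cdrel} identifies $D_\Delta$ with $D_g$ for $g=\Delta_z$), so the two--sided versus one--sided cases match up correctly.

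Assembling: take $N:=(N_z\cup O_\Delta)$. Then $\mu(N)=0$ by Corollary~\ref{corcdrel} together with $\mu(N_z)=0$, and for every $x\in[a,b)\setminus N$ the $\Delta_z$--derivative of $f$ at $x$ exists, hence so does $f^\Delta(x)$ by the factorization above. This proves the proposition. The main obstacle I anticipate is making the second factor's limit rigorous: one must show $(\Delta_z(y)-\Delta_z(x))/(\Delta_x(y)-\Delta_x(x))\to1$ (not merely that it is bounded), and the honest route is either the squeeze from (H4, ii) with $\overline z$ specialized to the moving point $y$, or a Radon--Nikod\'ym/Lebesgue--differentiation argument using~\eqref{hlim} and Remark~\ref{hmbound}; either way one must be careful that the $\mu$--null exceptional set from that argument is absorbed into $N$. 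Everything else is a routine limit manipulation once the classical result from \cite{PoRo} is invoked.
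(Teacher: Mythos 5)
The transfer step is where your argument breaks down, and the obstacle you flag at the end is not a technicality but a genuine gap. Applying (H4,\,i) with the \emph{fixed} base points $x$ and $z$ only squeezes the second factor $\bigl(\Delta_z(y)-\Delta_z(x)\bigr)/\Delta(x,y)$ between $1/\gamma(x,z)$ and $\gamma(z,x)$, constants that do not tend to $1$ as $y\to x$. You cannot invoke (H4,\,ii) ``along $\overline z=y$'': in (H4) the points $z,\overline z$ are base points (first arguments of $\Delta$), and replacing the base point $z$ by $y$ compares $\Delta_x(y)-\Delta_x(x)$ with $\Delta_y(y)-\Delta_y(x)$, not with $\Delta_z(y)-\Delta_z(x)$. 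In fact the limit of the second factor, when it exists, is $\lim_{y\to x}\mu_z([x,y))/\mu_x([x,y))$, i.e.\ a Lebesgue--point value of $h_{z,x}$ at $x$, which is in general different from $1$; this is exactly why Proposition~\ref{equivgdelt1} proves $\lim_{s\to t}(g(s)-g(t))/\Delta(t,s)=1$ only for the special derivator $g$ of~\eqref{gfunc}, whose density relative to $\mu_t$ is $h_{r,t}(r)$ evaluated at the \emph{moving} point $r\in[t,s)$, so that the (H4,\,ii) squeeze genuinely applies; with $g=\Delta_z$ the density is $h_{z,t}(r)$ with $z$ fixed and no squeeze is available. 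Your Radon--Nikod\'ym fallback, if made rigorous, would give $f^\Delta(x)=h_{x,z}(x)\,f'_{\Delta_z}(x)$ rather than $f^\Delta(x)=f'_{\Delta_z}(x)$, and it requires an a.e.\ differentiation theorem for $\mu_z$ applied to the family $h_{\,\cdot\,,z}$ whose member varies with the base point $x$ --- none of which is supplied. So the pointwise implication ``$f'_{\Delta_z}(x)$ exists $\Rightarrow f^\Delta(x)$ exists'' on which your whole reduction rests is unproved (and, for non-smooth $\Delta$, false at individual points).

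The paper avoids pointwise transfer altogether. After disposing of $D_\Delta$ (where a regulated $f$ is trivially $\Delta$-differentiable) and of the countable discontinuity set of $f$, it introduces Dini upper and lower $\Delta$- and $\Delta_z$-derivatives and uses (H4) only for the one-sided bounds $\overline{f^\Delta}(x)\le\gamma(z,x)\,\overline{f^{\Delta}_z}(x)$ and $\underline{f^\Delta}(x)\ge\gamma(x,z)^{-1}\underline{f^{\Delta}_z}(x)$. These place the bad sets (Dini derivatives disagreeing, or upper Dini derivative infinite) inside sets described purely through $\Delta_z$-Dini derivatives, which are then shown $\mu_z$-null by a quantitative rising-sun/covering argument (Propositions~\ref{lem1} and~\ref{lem2}, following Botsko). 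Because the $\gamma$-distortion enlarges these sets beyond the mere non-differentiability set of $f$ with respect to $\Delta_z$, simply citing the a.e.\ differentiability result of \cite{PoRo} cannot replace that covering argument. To repair your proof you would have to either adopt this route or first establish the analogue of Proposition~\ref{equivgdelt1} and differentiate with respect to $g(t)=\mu([a,t))$ instead of $\Delta_z$ --- but constructing $g$ and its properties is itself downstream of the measure theory, not of \cite{PoRo} alone.
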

\begin{proof}
	First of all, note that, since $f$ is nondecreasing, f is regulated so if $x\in [a,b)\cap D_\Delta$, $f^\Delta(x)$ exists as
	\[f^\Delta(x)=\frac{f(x^+)-f(x)}{\Delta(x,x^+)}.\]
	Moreover, since either $a\in D_\Delta$ or $\mu(\{a\})=0$, it is enough to show that $f^\Delta(x)$ exists for all $x\in (a,b)\setminus(D_\Delta\cup O_\Delta)$ according to Remark~\ref{nullm}. 

	Let $x\in(a,b)\setminus(D_\Delta\cup O_\Delta)$. Since $\Delta_x(y)\not= \Delta_x(x)$ for any $y\not=x$, we can define the Dini upper and lower $\Delta$--derivatives as
	\[\overline{f^\Delta}(x):=\limsup_{y\to x}\frac{f(y)-f(x)}{\Delta(x,y)},\quad \underline{f^\Delta}(x):=\liminf_{y\to x}\frac{f(y)-f(x)}{\Delta(x,y)}.\]
	Furthermore, since $f$ is monotone, it has a countable number of discontinuity points, so 
	\[\mu(\{x\in(a,b)\setminus(D_\Delta\cup O_\Delta): f \mbox{ is discontinuous at }x\})=0.\]
	Thus, it is enough to show that the sets 
	\begin{align*}F:= &\{x\in(a,b)\setminus(D_\Delta\cup O_\Delta): f \mbox{ continuous at }x,\ \overline{f^\Delta}(x)>\underline{f^\Delta}(x) \},\\
	E: = & \{x\in(a,b)\setminus(D_\Delta\cup O_\Delta):\overline{f^\Delta}(x)=+\infty\},	
\end{align*}
	both have $\Delta$--measure zero (and therefore $\mu$--measure zero).

	We first show that $F$ is a null $\Delta$--measure set. Fix $z\in(a,b)\setminus(D_\Delta\cup O_\Delta)$ and define the Dini upper and lower $\Delta_z$--derivatives as
	\[\overline{f^\Delta_z}(x):=\limsup_{y\to x}\frac{f(y)-f(x)}{\Delta_z(y)-\Delta_z(x)},\quad \underline{f^\Delta_z}(x):=\liminf_{y\to x}\frac{f(y)-f(x)}{\Delta_z(y)-\Delta_z(x)}.\]
	Note that (H4) implies that
	\begin{align}
	\overline{f^\Delta}(x)&=\limsup_{y\to x}\frac{f(y)-f(x)}{\Delta_x(y)-\Delta_x(x)}=\limsup_{y\to x}\frac{f(y)-f(x)}{\Delta_x(y)-\Delta_x(x)}\frac{\Delta_z(y)-\Delta_z(x)}{\Delta_z(y)-\Delta_z(x)}\nonumber\\
	&=\overline{f^\Delta_z}(x)\limsup_{y\to x}\frac{\Delta_x(y)-\Delta_x(x)}{\Delta_z(y)-\Delta_z(x)}\leq \overline{f^\Delta_z}(x)\gamma(z,x), \nonumber
	\end{align}
	and, analogously, $\underline{f^\Delta}(x)\ge(\gamma(x,z))^{-1}\underline{f^\Delta_z}(x)$. Hence, $F$ is a subset of
	\[F_z:=\{(a,b)\setminus(D_\Delta\cup O_\Delta):f \mbox{ continuous at }x,\ \overline{f^\Delta_z}(x)\gamma(z,x)\gamma(x,z)>\underline{f^\Delta_z}(x)\}.\]
	Now, since $\gamma:[a,b]^2\to[1,+\infty)$, it is clear that $F_z\subset \bigcup_{n\in\bN} F_n$ with
	\[F_n:=\{(a,b)\setminus(D_\Delta\cup O_\Delta):f \mbox{ continuous at }x,\ \overline{f^\Delta_z}(x) n>\underline{f^\Delta_z}(x)\},\]
	so, it suffices to show that $\mu_z(F_n)=0$ for all $n\in\bN$. By contradiction, assume that there exists $n_0\in \bN$ such that $\mu_z(F_{n_0})>0$. In that case, we rewrite $F_{n_0}$ as the countable union of sets $F_{n_0,r,s}$ with $r,s\in\bQ$, $r>s>0$, and
	\[F_{n_0,r,s}:=\left\{x\in F_{n_0}:\overline{f^\Delta_z}(x) n_0>r>s>\underline{f^\Delta_z}(x) \right\}.\]
	Thus, there exist $r_0,s_0\in\bQ$, $r_0>s_0>0$, and $\e\in\bR^+$ such that $\mu_z(F_{n_0,r_0,s_0})=\e$. Now let $\a=\frac{r_0-s_0}{2n_0}$, $\b=\frac{r_0+s_0}{2n_0}$ and $h(x)=f(x)-\beta \Delta_z(x)$. Then $F_{n_0,r_0,s_0}=H$ with
	\[H:=\left\{x\in F_{n_0}:\overline{h^\Delta_z}(x)>\a,\ \underline{h^\Delta_z}(x)<-\a\right\}.\]

	Note that $h$ is of bounded variation as it is the difference of two nondecreasing functions. Therefore, the set \[V(h):=\left\{\sum_P |h(x_k)-h(x_{k-1})|: P\mbox{ is a partition of }[a,b],\ P\cap D_\Delta\subset \{a,b\}\right\}\] is bounded from above. Let $T:=\sup V(h)$. Since $\a,\e\in\bR^+$, there exists a partition $P=\{x_0,x_1,\dots,n-1\}$ of $[a,b]$ such that $x_k\not\in D_\Delta$ for any $k\in\{1,2,\dots,n-1\}$ and
	\[\sum_{k=1}^n |h(x_k)-h(x_{k-1})|> T-\frac{\a\e}{4}.\]
	Let $x\in H\setminus P$. Then $x\in (x_{k-1},x_k)$ for some $k\in\{1,2,\dots,n\}$. Since $\overline{h^\Delta_z}(x)>\a$, $\underline{h^\Delta_z}(x)<-\a$ and both $\Delta_z$ and $h$ are continuous at $x$, we can choose $a_x$, $b_x\in (x_{k-1},x_k)\setminus D_\Delta$ such that $a_x<x<b_x$ and
	\[\frac{h(b_x)-h(a_x)}{\Delta_z(b_x)-\Delta_z(a_x)}<-\a\quad\mbox{or}\quad >\a,\]
	depending on whether $h(x_{k-1})\geq h(x_k)$ or $h(x_{k-1})< h(x_k)$. Note that $\mu_z(a_x,b_x)=\Delta_z(b_x)-\Delta_z(a_x).$
	By doing this, we obtain a collection of open subintervals of $(a,b)$, $\mathcal I=\{(a_x,b_x): x\in H\setminus\{x_1,x_2,\dots,x_{n-1}\}\}$, that covers $H\setminus\{x_1,x_2,\dots,x_{n-1}\}$ and $\{x_1,x_2,\dots,x_{n-1}\}\cap D_\Delta=\emptyset$. Then, Proposition~\ref{lem2} ensures the existence of a finite disjoint subcollection $\{I_1,I_2,\dots,I_N\}$ of $\mathcal I$ such that
	\[\sum_{k=1}^N \mu_z(I_k)>\frac{\e}{4}.\]
	Now let $Q=\{y_0,y_1,\dots, y_q\}$ be the partition of $[a,b]$ determined by the points of $P$ and the endpoints of the intervals $I_1,I_2,\dots,I_N$. For each $[x_{k-1},x_k]$ containing at least one of the intervals in $\{I_1,I_2,\dots,I_N\}$, Proposition~\ref{lem1} yields that
	\[\sum_{[y_{i-1},y_i]\subset [x_{k-1},x_k]}|h(y_i)-h(y_{i-1})|>|h(x_k)-h(x_{k-1})|+\a L_k,\]
	where the summation is taken over the closed intervals determined by $Q$ contained in $[x_{k-1},x_k]$ and $L_k$ is the sum of the $\Delta_z$--measures of those intervals $I_1,I_2,\dots,I_N$ contained in $[x_{k-1},x_k]$. By taking the previous inequality and summing over $k$, we obtain
	\[\sum_{k=1}^q|h(y_k)-h(y_{k-1})|>\sum_{k=1}^n|h(y_k)-h(y_{k-1})|+\a\sum_{k=1}^N \mu_z(I_k)>T,\]
	which contradicts the definition of $T$.

	Hence, all that is left to do is to show that the set $E$ has $\Delta$--measure zero. If we fix $z\in[a,b]$, then for all $x\in(a,b)\setminus(D_\Delta\cup O_\Delta)$ we have the inequality $\overline{f^\Delta}(x)\le\overline{f^\Delta_z}(x)\gamma(z,x)$ and so $E\subset E_z$ with
	\[E_z:=\{x\in(a,b)\setminus(D_\Delta\cup O_\Delta): \overline{f^\Delta_z}(x)=+\infty\}.\]
	Thus, it is enough to show that $\mu_z(E_z)=0$. Suppose this is not the case. Then there is $\e\in \bR^+$ such that $\mu_z(E_z)=\e.$ Let $M\in \bR^+$ be such that $M>3(f(b)-f(a))/\e$.
	If $x\in E_z$ then $\overline{f^\Delta_z}(x)>M$ and there exist $a_x,b_x\in(a,b)\setminus D_\Delta$ such that $a_x<x<b_x$ and
	\[\frac{f(b_x)-f(a_x)}{\Delta_z(b_x)-\Delta_z(a_x)}>M.\]
	Therefore, $\{(a_x,b_x):x\in E_z\}$ covers $E_z$. Proposition~\ref{lem2} guarantees the existence of a finite disjoint subcollection $\{I_1,I_2,\dots,I_N\}$ such that
	\[\sum_{k=1}^N \mu_z(I_k)>\frac{\e}{3}.\]
	Let $I_k=(a_k,b_k)$ for each $k$. Then $\mu_z(I_k)=\Delta_z(b_k)-\Delta_z(a_k)$ as each $I_k\subset (a,b)\setminus(D_\Delta\cup O_\Delta)$. Now, since $f$ is nondecreasing we have
	\[f(b)-f(a)\ge \sum_{k=1}^N (f(b_k)-f(a_k))>M \sum_{k=1}^N(\Delta_z(b_k)-\Delta_z(a_k))>f(b)-f(a),\]
	which is a contradiction.
\end{proof}

Finally, a key result for the proof of the Fundamental Theorem of Calculus is Fubini's Theorem on almost everywhere differentiation of series for $\Delta$--derivatives. We now state such result but we omit its proof as it is essentially the one provided in \cite{Stro} but using Proposition~\ref{mondif} instead of the classical Lebesgue Differentiation Theorem.
\begin{pro}\label{fub}
	Let $([a,b],\le,\Delta)$ satisfy \textup{(H1)--(H5)} and let $(f_n)_{n\in\bN}$ be a sequence of real--valued nondecreasing functions on $[a,b]$. If the series
	\[f(x)=\sum_{n=1}^\infty f_n(x)\quad\mbox{converges for all } x\in [a,b],\]
	then
	\[f^\Delta(x)=\sum_{n=1}^\infty f_n^\Delta(x)\quad\mbox{for } \mu\mbox{--a.a. } x\in [a,b].\]
\end{pro}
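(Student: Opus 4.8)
The plan is to transcribe the classical Fubini theorem on term--by--term differentiation of monotone series (see \cite{Stro}), substituting Proposition~\ref{mondif} for the Lebesgue Differentiation Theorem at every step. First I would normalize: the numerical series $\sum_n f_n(a)$ converges, so replacing each $f_n$ by $f_n-f_n(a)$ alters neither the hypotheses nor the $\Delta$--derivatives; hence one may assume $f_n(a)=0$, so that each $f_n$ is nonnegative. Write $s_N:=\sum_{n=1}^N f_n$; then $s_N$ is nondecreasing, and so is the tail $f-s_N=\sum_{n>N}f_n$, as is $f$ itself. By Proposition~\ref{mondif} there is a single $\mu$--null set $Z$ — the countable union of the exceptional sets attached to $f$, to each $f_n$ and to each $s_N$ — off which $f^\Delta$, $f_n^\Delta$ and $s_N^\Delta$ all exist. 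At a point $x\notin Z$, Definition~\ref{deriv} is a plain limit of difference quotients, so the usual arithmetic of limits gives $s_N^\Delta(x)=\sum_{n=1}^N f_n^\Delta(x)$ and $f^\Delta(x)=s_N^\Delta(x)+(f-s_N)^\Delta(x)$; moreover, since $\Delta_x$ is nondecreasing by (H3) and does not vanish off $x$ when $x\notin O_\Delta$ (Remark~\ref{nullm}), the $\Delta$--derivative of any nondecreasing function is $\ge0$ wherever it is defined — for $x\in D_\Delta$ this is read off the explicit formula $h^\Delta(x)=(h(x^+)-h(x))/\Delta(x,x^+)$. Consequently $\bigl(\sum_{n=1}^{N}f_n^\Delta(x)\bigr)_{N}$ is nondecreasing in $N$ and bounded above by $f^\Delta(x)$, so $\sum_{n=1}^\infty f_n^\Delta(x)$ converges and $\sum_{n=1}^\infty f_n^\Delta(x)\le f^\Delta(x)$ for all $x\notin Z$.

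The remaining point — promoting this inequality to an equality — is the part requiring the \emph{Fubini trick}. I would pick indices $N_1<N_2<\cdots$ with $f(b)-s_{N_k}(b)<2^{-k}$, which is possible because $s_N(b)\uparrow f(b)$. Set $g_k:=f-s_{N_k}$, a nonnegative nondecreasing function with $g_k(a)=0$ and $g_k(b)<2^{-k}$; since $0\le g_k(x)\le g_k(b)$ for every $x$, the series $G:=\sum_{k\ge1}g_k$ converges at every point of $[a,b]$ and is again a sum of nondecreasing functions. Applying the half of the statement just proved to $G$, the series $\sum_{k\ge1}g_k^\Delta(x)$ converges for $\mu$--a.a.\ $x$, hence $g_k^\Delta(x)\to0$ for $\mu$--a.a.\ $x$. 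But for $x\notin Z$ we have $g_k^\Delta(x)=f^\Delta(x)-s_{N_k}^\Delta(x)=f^\Delta(x)-\sum_{n=1}^{N_k}f_n^\Delta(x)$, so $\sum_{n=1}^{N_k}f_n^\Delta(x)\to f^\Delta(x)$; combined with the convergence of the full series from the first half, this yields $\sum_{n=1}^\infty f_n^\Delta(x)=f^\Delta(x)$ for $\mu$--a.a.\ $x\in[a,b]$.

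I expect no step to present a real difficulty beyond careful bookkeeping; the genuinely substantive input is Proposition~\ref{mondif}, and everything else is limit arithmetic. The two places that deserve a line of justification are: (i) that all the $\Delta$--derivatives in play exist simultaneously off a single $\mu$--null set, which follows from Proposition~\ref{mondif} and the stability of $\mu$--null sets under countable unions; and (ii) the behaviour at $x\in D_\Delta$, where one uses the explicit jump formula together with the identity $(f-s_N)(x^+)=\sum_{n>N}f_n(x^+)$, valid because $0\le f_n(y)-f_n(x)\le f_n(b)-f_n(x)$ with $\sum_n(f_n(b)-f_n(x))<\infty$, so the limit $y\to x^+$ may be interchanged with the sum. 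If one wished to keep the paper self--contained, these two observations would be the only additions to the argument of \cite{Stro}.
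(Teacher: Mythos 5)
Your argument is correct and is precisely the one the paper intends: the paper omits the proof, stating only that it is the classical Fubini series--differentiation argument of \cite{Stro} with Proposition~\ref{mondif} replacing the Lebesgue Differentiation Theorem, and your write-up is a faithful and careful transcription of exactly that scheme (normalization to $f_n(a)=0$, nonnegativity of $\Delta$--derivatives of nondecreasing functions off $O_\Delta$ via (H3), and the tail-selection trick $f(b)-s_{N_k}(b)<2^{-k}$). The two bookkeeping points you flag --- collecting the exceptional sets of Proposition~\ref{mondif} into one $\mu$--null set, and the jump formula at points of $D_\Delta$ --- are indeed the only places where the displacement setting requires a word beyond the classical proof.
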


We now have all the necessary tools to state and prove the first part of the Fundamental Theorem of Calculus for $\Delta$--derivatives.

\begin{thm}[Fundamental Theorem of Calculus]\label{FTC}
Let $f\in\mathcal L_{\mu}^1([a,b))$ and $F(x)=$ $\int_{[a,x)} f(s)\dif\mu$. 
Then $F^{\Delta}(x)=f(x)$ for $\mu$--a.a. $x\in[a,b]$.
\end{thm}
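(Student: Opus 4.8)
The plan is to follow the classical Lebesgue route, transported to the $\Delta$-setting via the three preliminary results just established (Propositions~\ref{mondif}, \ref{fub}, and the estimates on the densities $h_{z,\ol z}$). First I would reduce to the case $f\ge 0$ by splitting $f=f^+-f^-$ and using linearity of both the $\Delta$-derivative (which holds because we work with limits in the usual topology of $\bR$) and the integral with respect to the path of measures. For $f\ge 0$ the function $F(x)=\int_{[a,x)}f\dif\mu$ is nondecreasing, so by Proposition~\ref{mondif} the derivative $F^\Delta(x)$ exists for $\mu$-a.a. $x\in[a,b)$; the real content is the identification $F^\Delta=f$ a.e.

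The core step is to prove the identity first for $f=\chi_{[a,c)}$, or more conveniently to prove directly that for the density function $g$ of $\mu$ (as in~\eqref{gfunc}, so $\mu([c,d))=g(d)-g(c)$) one has $g^\Delta(x)=1$ for $\mu$-a.a. $x$; this is the analogue of ``$\Delta_x$ is its own $\Delta$-antiderivative''. For a point $x\notin O_\Delta\cup D_\Delta$ at which $g$ is differentiable with respect to $\Delta$, write, for $y\to x$,
\[\frac{g(y)-g(x)}{\Delta(x,y)}=\frac{\int_{[x,y)}h_{s,x}(s)\dif\mu_x(s)}{\Delta_x(y)-\Delta_x(x)},\]
and use the bound $1/\gamma(x,s)\le h_{s,x}(s)\le\gamma(x,s)$ from~\eqref{hbound} together with $\gamma(x,s),\gamma(s,x)\to1$ as $s\to x$ (hypothesis (H4, ii) and Remark~\ref{hmbound}) to squeeze the quotient between quantities tending to $1$; one also uses that $\mu_x([x,y))=\Delta_x(y)-\Delta_x(x)$ since $x\notin O_\Delta$. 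Handling the jump points $x\in D_\Delta$ is immediate from $g(x^+)-g(x)=\Delta_x(x^+)$ (established in the proof of Proposition~\ref{cdrel}). This gives $g^\Delta=1$, hence $F^\Delta=f$ $\mu$-a.e. when $f$ is a nonnegative constant, and then by scaling and additivity of the integral when $f$ is a nonnegative simple function, the corresponding $F$ being a finite linear combination of translates of $g$.

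From simple functions I would pass to a general $f\in\mathcal L^1_\mu([a,b))$, $f\ge0$, by a standard monotone/telescoping argument: choose simple functions $0\le s_1\le s_2\le\cdots$ with $s_n\uparrow f$ in $L^1_\mu$, fast enough that $\sum_n\int_{[a,b)}(f-s_n)\dif\mu<\infty$, set $g_n(x)=\int_{[a,x)}(f-s_n)\dif\mu$, which are nondecreasing, and note $\sum_n g_n(x)$ converges for every $x\in[a,b]$ (dominated by $\sum_n\int_{[a,b)}(f-s_n)\dif\mu$). Fubini's series theorem for $\Delta$-derivatives (Proposition~\ref{fub}) then forces $g_n^\Delta(x)\to0$ for $\mu$-a.a. $x$; since $g_n=F-\int_{[a,\cdot)}s_n\dif\mu$ and $\big(\int_{[a,\cdot)}s_n\dif\mu\big)^\Delta=s_n\to f$ a.e. by the simple-function case and a diagonal choice of subsequence, we conclude $F^\Delta(x)=f(x)$ for $\mu$-a.a.\ $x$. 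Finally I would assemble the pieces and subtract off $O_\Delta$, invoking Corollary~\ref{corcdrel} and Remark~\ref{nullm} to absorb the troublesome set.

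The main obstacle I expect is the squeeze argument at the heart of the proof: controlling $\big(g(y)-g(x)\big)/\Delta(x,y)$ as $y\to x$ when $x$ may be a point where $\Delta_x$ is continuous but not differentiable in the ordinary sense, and keeping the comparison between $\mu_x$ and $\mu$ uniform near $x$. Because $g$ is built from $\mu=\mu_{\Id}$, which mixes all the local measures $\mu_z$ through the densities $h_{s,x}(s)$, one must be careful that the factor relating $\dif\mu$ to $\dif\mu_x$ genuinely tends to $1$ along the shrinking interval $[x,y)$ and not merely pointwise at $x$; this is exactly what (H4, ii) combined with the uniform bounds~\eqref{hbound} is designed to deliver, but making the estimate rigorous for arbitrary (possibly one-sided, at $D_\Delta$-points adjacent) approaches is the delicate technical point. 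Everything else is bookkeeping with the already-proven lemmas.
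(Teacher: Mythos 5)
Your proposal is correct and its overall architecture coincides with the paper's: reduce to $f\ge 0$, establish the identity for indicators of intervals and hence for step functions, and pass to general integrable $f$ by a monotone/telescoping limit controlled by Proposition~\ref{fub}, with Proposition~\ref{mondif} supplying existence of $F^{\Delta}$ a.e. throughout. The one genuine divergence is the base case. The paper's Case~1 writes the difference quotient as $\bigl(H(y)-H(x)\bigr)/\bigl(\Delta_x(y)-\Delta_x(x)\bigr)$ for $H(t)=\int_{[a,t)}h_{s,x}(s)\,\dif\mu_x(s)$ and then invokes the Fundamental Theorem of Calculus for Stieltjes derivatives from \cite{PoRo} to conclude $H'_{\Delta_x}(x)=h_{x,x}(x)=1$; you instead squeeze the quotient between $\inf_{s\in[x,y)}\gamma(x,s)^{-1}$ and $\sup_{s\in[x,y)}\gamma(s,x)$ using~\eqref{hbound} and (H4,\,ii). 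That squeeze is precisely the computation the paper defers to Proposition~\ref{equivgdelt1} in Section~\ref{Stieltjes} to prove $\lim_{s\to t}(g(s)-g(t))/\Delta(t,s)=1$, so your route is self-contained where the paper outsources the key limit to an external theorem, at the cost of having to verify (as you correctly flag) that the bounds on $h_{s,x}(s)$ hold for every $s$ in the shrinking interval and not merely at $s=x$ --- which is exactly what the everywhere-valid version of~\eqref{hbound} together with (H4,\,ii) delivers. Two small points to tighten: your ``simple functions'' must be step functions (constant on intervals) for the base case to apply, and, as with the paper's class $M_0(\Delta)$, one should either place the partition points outside $D_\Delta$ or check the jump points separately.
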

\begin{proof} 
	Without loss of generality we can assume that $f\ge 0$, as the general case can be reduced to the difference of two such functions. Since $f\ge 0$,
	the function $F$ is nondecreasing and therefore $\Delta$--differentiable. We consider several cases separately:

	\noindent
	{\it Case 1:} If $f=\chi_{(\a,\b)}$, where $(\a,\b) \subset (a,b)$, $\a, \b \not \in D_\Delta$, then it is obvious that $F^\Delta(x)=0=\chi_{(\a,\b)}(x)$ for $x\not\in(\a,\b).$ Now, if $x\in(\a,\b)$ let $H:[a,b]\to\bR$ be defined as 
	\[H(t)=\int_{[a,t)}h_{s,x}(s)\dif \mu_x(s),\]
	where the integral is to be understood as a Lebesgue--Stieltjes integral. Note that $H$ is well--defined. Then
	\begin{align*}
	F^\Delta(x)&=\lim_{y \to x^+}\frac{F(y)-F(x)}{\Delta(x,y)}=\lim_{y \to x^+}\frac{\int_{[a,y)}f\dif\mu-\int_{[a,x)}f\dif\mu}{\Delta(x,y)}\\
	&=\lim_{y \to x^+}\frac{\int_{[a,y)}h_{s,x}(s)f(s)\dif\mu_x(s)-\int_{[a,x)}h_{s,x}(s)f(s)\dif\mu_x(s)}{\Delta(x,y)}\\
	&=\lim_{y \to x^+}\frac{\int_{[x,y)}h_{s,x}(s)f(s)\dif\mu_x(s)}{\Delta(x,y)}=\lim_{y \to x^+}\frac{H(y)-H(x)}{\Delta_x(y)-\Delta_x(x)}\\
	&= H'_{\Delta_x}(x)=h_{x,x}(x)=1=\chi_{(\a,\b)}(x)
	\end{align*}
	were the equality $H'_{\Delta_x}(x)=h_{x,x}(x)$ follows from the Fundamental Theorem of Calculus for the Stieltjes derivative (see \cite[Theorem 2.4]{PoRo}).

	\noindent
	{\it Case 2:} Let $M_0(\Delta)$ be the set of all step functions whose discontinuities are not in $D_\Delta$. If $f \in M_0(\Delta)$ we deduce that $F^\Delta=f$ $\mu$--a.e. from Case 1.

	\noindent
	{\it Case 3:} There exists a nondecreasing sequence $(f_n)_{n=1}^{\infty}$ in $M_0(\Delta)$ such that $\lim_{n\to \infty}{f_n(x)}=f(x)$ for $\mu$--a.a. $x \in [a,b)$. We define
	\[F_n(x)=\int_{[a,x)}f_n \dif\mu,\]
	and then it follows from the Lebesgue's Monotone Convergence Theorem for measures that
	\begin{displaymath}F(x)=\lim_{n \to \infty}F_n(x)=F_1(x)+\sum_{k=2}^{\infty}(F_k(x)-F_{k-1}(x))\end{displaymath}
	for all $x \in [a,b]$. Since each summand is a nondecreasing step function of $x$, we can apply Case~2 and Proposition~\ref{fub} to deduce that for $\mu$--a.a. $x \in [a,b)$ we have
	\begin{align*}
	F^\Delta(x) &=F_1^\Delta(x)+\sum_{k=2}^{\infty}(F_k^\Delta(x)-F_{k-1}^\Delta(x)) \\
	&= f_1(x)+\sum_{k=2}^{\infty}(f_k(x)-f_{k-1}(x))=\lim_{n \to \infty}f_n(x)=f(x).
	\end{align*}

	\noindent
	{\it General Case.} For any $f \in {\cal L}^1_\Delta([a,b))$ we have $f=f_1-f_2$, where each of the $f_i$'s is the limit of a nondecreasing sequence of step functions in the conditions of Case 3.
\end{proof}

\begin{dfn}\label{abscont}
Let $x\in\bR$ and $F:[a,b]\to\bR$. We shall say that $F$ is $\Delta_x$--absolutely continuous if for every $\varepsilon>0$, there exists $\delta>0$ such that for every open pairwise disjoint family of subintervals $\{(a_n,b_n)\}_{n=1}^m$ verifying
\begin{displaymath}\sum_{n=1}^m (\Delta_x(b_n)-\Delta_x(a_n))<\delta\end{displaymath}
implies that
\begin{displaymath}\sum_{n=1}^m |F(b_n)-F(a_n)|<\varepsilon.\end{displaymath}
A map $F:[a,b]\to\bR^n$ is $\Delta_x$--absolutely continuous if each of its components is a $\Delta_x$--absolutely continuous function.

\end{dfn}
\begin{rem}
Note that as a consequence of (H4), if $F$ is $\Delta_x$--absolutely continuous, it is $\Delta_y$--absolutely continuous for all $y\in\bR$. Hence, we will just say that $F$ is $\Delta$--absolutely continuous.
\end{rem}

In the following results, we present some of the properties that $\Delta$--absolutely continuous functions share.
\begin{pro}\label{abscomp}
Let $f:[a,b]\to[c,d]$ be a $\Delta$--absolutely continuous function and let $f_2:[c,d]\to\bR$ satisfy a Lipschitz condition on $[c,d]$. The the composition $f_2\circ f_1$ is $\Delta$--absolutely continuous on $[a,b]$.
\end{pro}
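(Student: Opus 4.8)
The plan is to reduce everything to the definition of $\Delta$--absolute continuity combined with the Lipschitz estimate, exactly as one does in the classical setting. Fix once and for all a point $x\in\bR$. By the Remark immediately following Definition~\ref{abscont} --- which rests on hypothesis \textup{(H4)} --- it is enough to prove that $f_2\circ f_1$ is $\Delta_x$--absolutely continuous for this single $x$. Let $L>0$ be a Lipschitz constant for $f_2$ on $[c,d]$, so that $|f_2(u)-f_2(v)|\le L|u-v|$ for all $u,v\in[c,d]$; note that the composition is well defined and $[c,d]$--valued, so this bound will be applicable at all points we need.

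Next I would unwind the definitions. Given $\varepsilon>0$, I would apply the $\Delta_x$--absolute continuity of $f_1$ with $\varepsilon/L$ in place of $\varepsilon$, obtaining $\delta>0$ such that for every finite pairwise disjoint family $\{(a_n,b_n)\}_{n=1}^m$ of subintervals of $[a,b]$ with $\sum_{n=1}^m(\Delta_x(b_n)-\Delta_x(a_n))<\delta$ one has $\sum_{n=1}^m|f_1(b_n)-f_1(a_n)|<\varepsilon/L$. For any such family, the Lipschitz condition on $f_2$ then yields
\[
\sum_{n=1}^m |f_2(f_1(b_n))-f_2(f_1(a_n))|\le L\sum_{n=1}^m |f_1(b_n)-f_1(a_n)|<L\cdot\frac{\varepsilon}{L}=\varepsilon,
\]
which is precisely the defining inequality for $\Delta_x$--absolute continuity of $f_2\circ f_1$. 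Hence $f_2\circ f_1$ is $\Delta_x$--absolutely continuous, and by the Remark after Definition~\ref{abscont} it is $\Delta$--absolutely continuous on $[a,b]$.

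As for the main obstacle: there is essentially none. The entire content of the argument is the rescaling of $\varepsilon$ by the Lipschitz constant. The only two points deserving a word of care are the explicit invocation of the Remark following Definition~\ref{abscont}, so that controlling $\Delta_x$ suffices and hypothesis \textup{(H4)} does the rest, and the trivial observation that $f_1$ being $[c,d]$--valued is what makes the Lipschitz bound for $f_2$ legitimately applicable; both are routine.
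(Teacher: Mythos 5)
Your proposal is correct and follows essentially the same argument as the paper: fix $x$, apply the $\Delta_x$--absolute continuity of $f_1$ with $\varepsilon/L$, and use the Lipschitz bound to conclude, invoking the remark after Definition~\ref{abscont} to pass from one $x$ to all of them. No gaps.
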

\begin{proof}
Let $L>0$ be a Lipschitz constant for $f_2$ on $[c,d]$. Fix $x\in[a,b]$. For each $\e>0$ take $\d>0$ in Definition~\ref{abscont} with $\e$ replaced by $\e/L$. Now, for an open pairwise disjoint family of subintervals $\{(a_n,b_n)\}_{n=1}^m$ such that
\[\sum_{n=1}^m (\Delta_x(b_n)-\Delta_x(a_n))<\delta\]
we have that
\[\sum_{n=1}^m |f_2(f_1(b_n))-f_2(f_1(a_n))|\le L \sum_{n=1}^m |f_1(b_n)-f_1(a_n)|<\varepsilon,\]
that is, $f_2\circ f_1$ is $\Delta_x$--absolutely continuous.
\end{proof}

\begin{pro}
	Let $F:[a,b]\to\bR$ be a $\Delta$--absolutely continuous function. Then $F$ is of bounded variation.
\end{pro}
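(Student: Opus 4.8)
The plan is to cut $[a,b]$ into finitely many subintervals on each of which $F$ has finite variation, controlling these pieces through a single application of the $\Delta$--absolute continuity. Fix any $x_0\in[a,b]$ and write $g:=\Delta_{x_0}$; by \textup{(H3)} and Proposition~\ref{plc}, $g$ is nondecreasing on $[a,b]$ and left--continuous everywhere. By the remark following Definition~\ref{abscont}, $F$ is $\Delta_{x_0}$--absolutely continuous, so taking $\varepsilon=1$ we obtain $\delta>0$ such that $\sum_{n=1}^m|F(b_n)-F(a_n)|<1$ whenever $\{(a_n,b_n)\}_{n=1}^m$ is a pairwise disjoint family of open subintervals of $[a,b]$ with $\sum_{n=1}^m\big(g(b_n)-g(a_n)\big)<\delta$. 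Write $V(F;[c,d])$ for the total variation of $F$ on $[c,d]$. The first local estimate is: if $[c,d]\subseteq[a,b]$ and $g(d)-g(c)<\delta$, then $V(F;[c,d])\le 1$, since for any partition $c=y_0<\dots<y_m=d$ the open intervals $(y_{k-1},y_k)$ are pairwise disjoint with $\sum_k(g(y_k)-g(y_{k-1}))=g(d)-g(c)<\delta$, so $\sum_k|F(y_k)-F(y_{k-1})|<1$. In particular, by left--continuity of $g$, for every $t\in(a,b]$ there is $\eta'>0$ with $g(t)-g(t-\eta')<\delta$, whence $V(F;[t-\eta',t])\le1$.

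The second estimate handles right neighborhoods of $t\in[a,b)$, where $g$ may jump. First, $F(t^+):=\lim_{s\to t^+}F(s)$ exists by the Cauchy criterion: given $\varepsilon'>0$, choose $\delta'>0$ from $\Delta_{x_0}$--absolute continuity and then $\sigma>0$ with $|g(s)-g(t^+)|<\delta'/2$ for $s\in(t,t+\sigma)$; for $t<s'<s<t+\sigma$ this gives $g(s)-g(s')<\delta'$, hence $|F(s)-F(s')|<\varepsilon'$. Consequently $F$ is bounded on some $(t,t+\eta]$; shrinking $\eta$ so that also $g(t+\eta)-g(t^+)<\delta$ (possible since $g(s)\to g(t^+)$), I get $V(F;[t,t+\eta])<\infty$. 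Indeed, for a partition $t=y_0<y_1<\dots<y_m=t+\eta$, using $g(y_1)\ge g(t^+)$ one has $\sum_{k=2}^m(g(y_k)-g(y_{k-1}))=g(t+\eta)-g(y_1)<\delta$, so $\sum_{k=2}^m|F(y_k)-F(y_{k-1})|<1$, and therefore $\sum_{k=1}^m|F(y_k)-F(y_{k-1})|\le 1+\sup_{t<s\le t+\eta}|F(s)-F(t)|<\infty$.

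Combining the two estimates, for each $t\in[a,b]$ set $U_t:=(t-\eta',t+\eta)\cap[a,b]$ (with the obvious one--sided modification at $a$ and $b$); this is $\tau_u$--open in $[a,b]$, it contains $t$, and, splitting at $t$, $V(F;\overline{U_t}\cap[a,b])<\infty$ by the two estimates. Since $[a,b]$ is $\tau_u$--compact, finitely many $U_{t_1},\dots,U_{t_r}$ cover it; let $a=s_0<s_1<\dots<s_N=b$ consist of $\{a,b\}$ together with the endpoints of these sets lying in $[a,b]$. Each $[s_{i-1},s_i]$ is contained in some $\overline{U_{t_j}}\cap[a,b]$, because its midpoint lies in some $U_{t_j}$ whose relevant endpoints are among the $s_i$; hence $V(F;[s_{i-1},s_i])<\infty$, and since total variation is additive over adjacent subintervals, $V(F;[a,b])\le\sum_{i=1}^N V(F;[s_{i-1},s_i])<\infty$.

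The one delicate point — and the reason this is slightly longer than the classical argument — occurs at the points of $D_\Delta$, i.e.\ where $g=\Delta_{x_0}$ has a jump of size at least $\delta$: there no short right neighborhood can have $g$--increment below $\delta$, so the clean first estimate is unavailable, and one must first establish that $F$ has a right limit (hence is locally bounded) in order to bound its contribution there. Once that local boundedness is secured, it is compactness — not any counting of the jumps of $g$ — that reduces the problem to finitely many pieces and closes the proof.
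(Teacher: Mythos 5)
Your proof is correct, but it takes a genuinely different route from the paper's. The paper partitions the \emph{range} $[\Delta_x(a),\Delta_x(b)]$ into finitely many pieces of length less than $\delta$ and pulls them back through $\Delta_x^{-1}$, obtaining an explicit finite cover of $[a,b]$ by intervals on whose interiors every closed subinterval has variation at most $1$; the closure endpoints of those intervals (where $\Delta_x$ may jump by $\delta$ or more) are then absorbed by a preliminary claim asserting that a uniform variation bound on all closed subintervals of an open interval forces $F$ to be bounded, and hence of bounded variation, on the closed interval. You instead argue locally at every point and invoke compactness of $[a,b]$ to extract the finite cover, and you deal with the jump points of $g=\Delta_{x_0}$ by proving directly, via the Cauchy criterion, that $F$ admits one-sided limits and is therefore locally bounded there. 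Both arguments rest on the same two pillars --- the telescoping of $g$--increments over any partition of an interval of small $g$--oscillation, and a separate boundedness argument at the points where that estimate is unavailable --- but your decomposition comes from compactness rather than from an explicit partition of the range, and your boundedness argument is a genuinely different substitute for the paper's claim. The paper's version is slightly more economical in that it avoids compactness and produces the finite decomposition in one stroke; yours has the side benefit of showing along the way that a $\Delta$--absolutely continuous function is regulated, which anticipates the continuity proposition that the paper proves separately afterwards.
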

\begin{proof}
	To prove this result we will use the following remark: if for any $[\alpha,\beta]\subset(a,b)$ there exists $c>0$ such that the total variation of $F$ on $[\alpha,\beta]$ is bounded from above by $c$, then $F$ has bounded variation on $[a,b]$. Indeed, assume that for any $[\alpha,\beta]\subset(a,b)$ there exists $c>0$ such that the total variation of $F$ on $[\alpha,\beta]$ is bounded from above by $c$. Then for each $x\in(a,b)$,
	\[ |F(x)|\le \left|F(x)-F\(\frac{a+b}{2}\)\right|+\left|F\(\frac{a+b}{2}\)\right|\le c+\left|F\(\frac{a+b}{2}\)\right|.\]
	Hence, $|F|$ is bounded on $[a,b]$. Let $K>0$ be one of its bounds. For any partition $\{x_0,x_1,\dots,x_n\}$ of $[a,b]$, we have that
	\[\sum_{k=1}^n |F(x_k)-F(x_{k-1})|=|F(x_1)-F(a)|+\sum_{k=2}^{n-1}|F(x_k)-F(x_{k-1})|+|F(b)-F(x_{n-1})|\le 4K+c,\]
	and so, our claim holds.

	Now, to prove that $F$ has bounded variation on $[a,b]$, fix $x\in[a,b]$ and take $\e=1$ in the definition of $\Delta_x$--absolute continuity. Then, there exists $\delta>0$ such that for any family $\{(a_n,b_n)\}_{n=1}^m$ of pairwise disjoint open subintervals of $[a,b]$,
	\[\sum_{n=1}^m (\Delta_x(b_n)-\Delta_x(a_n))<\delta\implies \sum_{n=1}^m |F(b_n)-F(a_n)|<1.\]
	Consider a partition $\{y_0,y_1,\dots,y_n\}$ of $[\Delta_x(a),\Delta_x(b)]$ such that $0<y_k-y_{k-1}<\delta$, $k=1,2,\dots, n$. Define $I_k=\Delta_x^{-1}([y_{k-1},y_k))$, $k=1,2,\dots, n$. Since $\Delta_x$ is nondecreasing, the sets $I_k$ are empty or they are intervals not necessarily open nor close. Anyway, $[a,b]=\cup I_k$, and so it is enough to show that $F$ has bounded variation on the closure of each $I_k$. We assume the nontrivial case, that is, $\overline I_k=[a_k,b_k]$, $a_k<b_k$. If $[\alpha,\beta]\subset (a_k,b_k)$ and $\{t_0,t_1,\dots,t_m\}$ is a partition of $[\alpha,\beta]$, then
	\[\sum_{i=1}^m \(\Delta_x(t_i)-\Delta_x(t_{i-1})\)=\Delta_x(\beta)-\Delta_x(\alpha)\leq y_k-y_{k-1}<\delta\implies \sum_{i=1}^m |F(x_i)-F(x_{i-1})|<1. \]
	Now, our previous claim implies that $F$ has bounded variation on each $\overline I_k$, and therefore $F$ has bounded variation on $[a,b]$.
\end{proof}	
\begin{pro}
	Let $F:[a,b]\to\bR$ be a $\Delta$--absolutely continuous function. Then $F$ is left--continuous everywhere. Moreover, $F$ is continuous where $\Delta$ is continuous.
\end{pro}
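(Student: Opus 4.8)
The plan is to reduce both assertions to the one-interval instance of Definition~\ref{abscont}, using the left--continuity of $\Delta_x$ provided by Proposition~\ref{plc} for the first claim and, for the second, the fact that continuity of $\Delta$ at a point forces right--continuity of every $\Delta_x$ there.

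First I would fix $x_0\in(a,b]$ and an arbitrary base point $x\in[a,b]$; recall that ``$F$ is $\Delta$--absolutely continuous'' means precisely that $F$ is $\Delta_x$--absolutely continuous for this (equivalently, every) $x$. Given $\varepsilon>0$, choose the corresponding $\delta>0$ of Definition~\ref{abscont}. By Proposition~\ref{plc}, $\Delta_x$ is left--continuous at $x_0$, and by (H3) it is nondecreasing, so there is $\eta>0$ such that $0\le\Delta_x(x_0)-\Delta_x(s)<\delta$ whenever $0<x_0-s<\eta$. Applying Definition~\ref{abscont} to the one-interval family $\{(s,x_0)\}$ (trivially an open pairwise disjoint family) then gives $|F(x_0)-F(s)|<\varepsilon$ for all such $s$. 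As $\varepsilon$ and $x_0$ were arbitrary, $F$ is left--continuous everywhere.

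For the second statement I would first record that saying ``$\Delta$ is continuous at $t$'', that is, that $\Delta_t(\cdot)$ is continuous at $t$ (the same notion underlying $D_\Delta$ in Proposition~\ref{cdrel}), is equivalent --- via (H4, i) and the finiteness of $\gamma$ --- to $\Delta_x(\cdot)$ being continuous at $t$ for every $x\in[a,b]$; in view of Proposition~\ref{plc} this amounts to $\Delta_x$ being \emph{right}--continuous at $t$ for every $x$. Now fix such a point $x_0\in[a,b)$. Repeating the previous argument with the family $\{(x_0,s)\}$ and with $\eta>0$ chosen, using right--continuity of $\Delta_x$ at $x_0$ together with monotonicity, so that $0\le\Delta_x(s)-\Delta_x(x_0)<\delta$ for $0<s-x_0<\eta$, we get $|F(s)-F(x_0)|<\varepsilon$; hence $F$ is right--continuous at $x_0$. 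Combined with the left--continuity already established, $F$ is continuous at $x_0$.

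I do not anticipate a genuine obstacle here: the argument is essentially the unwinding of Definition~\ref{abscont} on one test interval. The two points worth a line of care are that a one-element family is admissible in Definition~\ref{abscont}, so that the smallness of $\Delta_x(x_0)-\Delta_x(s)$ alone triggers the conclusion, and the verification that (H4) transfers continuity of one $\Delta_{\bar z}(\cdot)$ at a point to every $\Delta_z(\cdot)$ there, which is what makes the phrase ``where $\Delta$ is continuous'' unambiguous and the choice of base point in the absolute-continuity hypothesis irrelevant.
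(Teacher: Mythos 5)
Your proposal is correct and follows essentially the same route as the paper: apply Definition~\ref{abscont} to a single test interval and use the one-sided continuity of $\Delta_x$ to make $\Delta_x(x_0)-\Delta_x(s)$ (resp.\ $\Delta_x(s)-\Delta_x(x_0)$) smaller than $\delta$. The only cosmetic difference is that the paper takes the base point of the absolute-continuity hypothesis equal to the point under study, so it needs only (H5) rather than Proposition~\ref{plc}, and it leaves the right-continuity case as ``analogous'' where you spell out the (H4)-transfer of continuity of $\Delta$ across base points.
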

\begin{proof}
	Fix $x\in[a,b]$ and $\e>0$ and let $\delta>0$ be given by the definition of $\Delta_x$--absolute continuity of $F$. Since $\Delta_x(\cdot)$ is left--continuous at $x$, there exists $\delta'>0$ such that if $0<x-t<\delta'$ then, \[\Delta_x(x)-\Delta_x(t)<\delta\implies |F(x)-F(t)|<\e.\]
	The proof in the case $\Delta_x$ is right--continuous at $x\in[a,b)$ is analogous, and we omit it.
\end{proof}

As a consequence of these two previous propositions, given $F$, a $\Delta$--absolutely continuous function, there exist two nondecreasing and left--continuous functions, $F_1,F_2$, such that $F=F_1-F_2$. We denote by $\mu_i:\mathcal B([a,b])\to\bR$ the Lebesgue--Stieltjes measure defined by $F_i$, $i=1,2$. Recall that Lebesgue--Stieltjes measure are positive measures that are also outer regular, that is, for every $E\in\mathcal B([a,b])$, we have
\[\mu_i(E)=\inf\{\mu_i(V): E\subset V, V\mbox{ open}\}. \]
A natural definition for a signed measure for the function $F$ is given by
\begin{displaymath}\mu_F(E)=\mu_1(E)-\mu_2(E),\quad E\in\mathcal B([a,b]).\end{displaymath}
\begin{lem}\label{fabs}
	Let $F:[a,b]\to\bR$ be a $\Delta$--absolutely continuous function. Then for every $x\in[a,b]$ we have $\mu_F\ll\mu.$
\end{lem}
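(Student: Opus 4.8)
The plan is to reduce this to the corresponding statement for Stieltjes derivatives, which is available in \cite{PoRo}. Recall that (H4, iii) together with~\eqref{hbound} gives $\mu_x\ll\mu\ll\mu_x$ for every $x\in[a,b]$; hence a Borel subset of $[a,b]$ is $\mu$--null if and only if it is $\mu_x$--null, and it is enough to fix one $x\in[a,b]$ and prove $\mu_F\ll\mu_x$. Before that I would record the harmless observation that $\mu_F$ does not depend on the particular choice of the decomposition $F=F_1-F_2$: if $\widetilde F_1,\widetilde F_2$ is another pair of nondecreasing left--continuous functions with $\widetilde F_1-\widetilde F_2=F$ and $\widetilde\mu_i$ the associated Lebesgue--Stieltjes measures, then, by the very definition of these measures, $(\mu_1-\mu_2)([c,d))=F(d)-F(c)=(\widetilde\mu_1-\widetilde\mu_2)([c,d))$ for every $[c,d)\subset[a,b]$; since the half--open subintervals of $[a,b]$ form a $\pi$--system generating $\mathcal B([a,b])$ and all the measures involved are finite, this forces $\mu_1-\mu_2=\widetilde\mu_1-\widetilde\mu_2$. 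In particular $\mu_F([c,d))=F(d)-F(c)$ for all such intervals.

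Next I would feed $F$ into the Stieltjes machinery. By (H3) and Proposition~\ref{plc}, $\Delta_x$ is nondecreasing and left--continuous on $[a,b]$, and $\mu_x$ is exactly its Lebesgue--Stieltjes measure; moreover Definition~\ref{abscont} with this fixed $x$ is verbatim the definition of $g$--absolute continuity of \cite{PoRo} for $g=\Delta_x$. Thus the integral representation of $g$--absolutely continuous functions from \cite{PoRo} applies and yields a function $f_x$, integrable with respect to $\mu_x$ on $[a,b)$, such that $F(t)=F(a)+\int_{[a,t)}f_x\dif\mu_x$ for all $t\in[a,b]$. Setting $\nu(E):=\int_E f_x\dif\mu_x$ for $E\in\mathcal B([a,b])$, the set function $\nu$ is a finite signed measure with $\nu\ll\mu_x$, and by the previous paragraph $\mu_F([c,d))=F(d)-F(c)=\int_{[c,d)}f_x\dif\mu_x=\nu([c,d))$ for every $[c,d)\subset[a,b]$. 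A routine Dynkin $\pi$--$\lambda$ argument (a finite signed measure is determined by its values on the generating $\pi$--system of half--open intervals) then gives $\mu_F=\nu$, hence $\mu_F\ll\mu_x$ and therefore $\mu_F\ll\mu$.

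The argument has no serious obstacle, only bookkeeping. One must check that $\pi$--system uniqueness is invoked correctly for signed rather than positive measures (apply it to the Jordan parts, equivalently to $\mu_F^{+}+\nu^{-}$ and $\mu_F^{-}+\nu^{+}$), and that the hypotheses of the representation theorem in \cite{PoRo} are genuinely met, which they are since $\Delta_x$ is nondecreasing and left--continuous on $[a,b]$. I would also point out why a more self--contained route is less convenient: working directly from the $\varepsilon$--$\delta$ definition of $\Delta_x$--absolute continuity together with outer regularity of the Lebesgue--Stieltjes measures runs into the point masses of $\Delta_x$ (the endpoints of the connected components of an open set need not carry small $\mu_x$--measure), and passing through the integral representation is precisely what sidesteps this difficulty.
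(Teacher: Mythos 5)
Your argument is correct, but it takes a genuinely different route from the paper. The paper proves the lemma directly from Definition~\ref{abscont}: fixing $x$ and the $\delta$ associated to $\varepsilon/2$, it takes an open set $V=\bigcup_n(a_n,b_n)$ with $\mu_x(V)<\delta$, applies the absolute-continuity estimate to the intervals $[a_n',b_n)$ with $a_n'\in(a_n,b_n)$, and lets $a_n'\to a_n^+$ to conclude $|\mu_F(V)|\le\varepsilon$; it then passes from open sets to arbitrary $\mu_x$--null Borel sets by outer regularity of the Lebesgue--Stieltjes measures. (So the point-mass issue you flag as an obstacle to the ``self-contained route'' is exactly what this limiting argument circumvents: one estimates $\sum|F(b_n)-F(a_n^+)|$ rather than $\sum|F(b_n)-F(a_n)|$, and $\mu_F((a_n,b_n))=F(b_n)-F(a_n^+)$.) Your proof instead imports the hard direction of the Fundamental Theorem of Calculus for Stieltjes derivatives from \cite{PoRo} applied to $g=\Delta_x$, obtains the integral representation $F(t)=F(a)+\int_{[a,t)}f_x\,\mathrm{d}\mu_x$, and reads off $\mu_F\ll\mu_x$ by a $\pi$--$\lambda$ identification of $\mu_F$ with the indefinite integral. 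This is legitimate and not circular (the cited result is external to the paper), and it is shorter modulo the citation; but it uses a considerably heavier tool than needed, whereas the paper's argument requires only outer regularity and the $\varepsilon$--$\delta$ definition, which keeps the logical development self-contained on the way to its own Theorem~\ref{FTC2}. Do make sure the identification $\mu_F=\nu$ is stated on the right $\sigma$--algebra (the half-open intervals $[c,d)$ generate the Borel sets of $[a,b)$, so the point $b$ needs a word), and that the uniqueness argument is applied to the positive parts as you indicate.
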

\begin{proof}
	Let $x\in[a,b]$, $\e>0$ and $\delta>0$ given by the definition of $\Delta_x$--absolute continuity with $\e$ replace by $\e/2$. Fix an open set $V\subset (a,b)$ such that $\mu_x(V)<\delta.$ Without loss of generality, we can assume that $V=\bigcup_{n\in\bN}(a_n,b_n)$ for a pairwise disjoint family of open intervals. For each $n\in\bN$, take $a_n'\in(a_n,b_n)$. Then, for each $m\in\bN$ we have
	\[\sum_{n=1}^m(\Delta_x(b_n)-\Delta_x(a_n'))=\mu_{x}\left(\bigcup_{n=1}^m [a_n',b_n) \right)\le \mu_x(V)<\delta,\]
	and so $\sum_{n=1}^m |F(b_n)-F(a_n')|<\e/2.$ By letting $a_n'$ tend to $a_n$, we obtain
	\begin{displaymath}\sum_{n=1}^m |F(b_n)-F(a_n^+)|\le \e/2,\quad \mbox{for each fixed }m\in\bN.\end{displaymath}
	Thus, if $\mu(V)<\delta$ we have that
	\[|\mu_F(V)|=\left|\sum_{n=1}^\infty \mu_F(a_n,b_n) \right|\le \sum_{n=1}^\infty |F(b_n)-F(a_n^+)|<\e. \]

	Let $E\in\mathcal B([a,b])$ be such that $\mu_x(E)=0.$ By outer regularity, there exist open sets $V_n\subset [a,b]$, $n\in \bN$ such that $E\subset V_n$, $n\in\bN$ and
	\begin{displaymath}\lim_{n\to\infty}\mu_x(V_n)=\mu_x(E),\quad \lim_{n\to\infty}\mu_i(V_n)=\mu_i(E),\quad i=1,2.\end{displaymath}
	Now, by the first part of the proof, we know that $\lil_{n\to\infty}\mu_F(V_n)=\mu_F(E)=0$ since $\lil_{n\to\infty}\mu_x(V_n)=\mu_x(E)=0,$ so
	\begin{displaymath}\mu_F(E)=\mu_1(E)-\mu_2(E)=\lim_{n\to\infty}\mu_F(V_n)=0.\end{displaymath}
	Hence, $\mu_F\ll\mu_x$, and since $\mu_x\ll \mu,$ the result follows.
\end{proof}

\begin{lem}\label{intabs}
Let $f\in\mathcal L^1_{\mu}([a,b])$ and consider $F:[a,b]\to\mathbb R$ given by 
\begin{displaymath}F(x)=\int_{[a,x)}f \dif\mu.\end{displaymath}
Then $F$ is $\Delta$--absolutely continuous.
\end{lem}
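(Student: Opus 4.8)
The plan is to reduce the statement to the absolute continuity of the Lebesgue integral, combined with the mutual absolute continuity of $\mu$ and the local measures $\mu_z$ established in Section~\ref{measure}. Since, by the remark following Definition~\ref{abscont}, it suffices to prove that $F$ is $\Delta_x$--absolutely continuous for a single (arbitrary) $x\in[a,b]$, I would fix such an $x$ and work throughout with the Lebesgue--Stieltjes measure $\mu_x$ associated to the nondecreasing left--continuous map $\Delta_x$.

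First I would isolate the two elementary identities that carry all the bookkeeping. For any half--open interval $[c,d)\subset[a,b]$ one has $F(d)-F(c)=\int_{[c,d)}f\dif\mu$; hence, for a pairwise disjoint family $\{(a_n,b_n)\}_{n=1}^m$ of open subintervals of $[a,b]$,
\[\sum_{n=1}^m|F(b_n)-F(a_n)|=\sum_{n=1}^m\left|\int_{[a_n,b_n)}f\dif\mu\right|\le\int_{E}|f|\dif\mu,\qquad E:=\bigcup_{n=1}^m[a_n,b_n),\]
the last step using that the half--open intervals $[a_n,b_n)$ are again pairwise disjoint. On the other hand, since $\mu_x$ is the Lebesgue--Stieltjes measure of $\Delta_x$, we have $\mu_x([a_n,b_n))=\Delta_x(b_n)-\Delta_x(a_n)$, so that $\mu_x(E)=\sum_{n=1}^m(\Delta_x(b_n)-\Delta_x(a_n))$.

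It therefore remains only to show: for every $\varepsilon>0$ there is $\delta>0$ such that $\mu_x(E)<\delta$ forces $\int_E|f|\dif\mu<\varepsilon$. For this I would introduce the set function $\nu(E):=\int_E|f|\dif\mu$ on $\mathcal M$. Because $f\in\mathcal L^1_\mu([a,b])$, $\nu$ is a \emph{finite} measure, and clearly $\nu\ll\mu$; since $\mu\ll\mu_x$ (indeed $\mu_x\ll\mu\ll\mu_x$ was shown in Section~\ref{measure}), we get $\nu\ll\mu_x$, with both $\nu$ and $\mu_x$ finite. The classical $\varepsilon$--$\delta$ characterization of absolute continuity for finite measures then supplies the required $\delta$; feeding this into the two displays above, with $x$ fixed, yields exactly Definition~\ref{abscont}, and the remark after it promotes $\Delta_x$--absolute continuity to $\Delta$--absolute continuity.

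The argument is essentially routine and I do not expect a genuine obstacle; the only points needing a little care are the verification that the half--open intervals $[a_n,b_n)$ stay pairwise disjoint (so that $\mu_x$ and $\int|f|\dif\mu$ split additively over $E$) and, if one wishes to avoid quoting the $\varepsilon$--$\delta$ absolute continuity theorem as a black box, an explicit comparison $\mu(E)\le M_x\,\mu_x(E)$ obtained from the bound $h_{s,x}(s)\le M_x$ of Remark~\ref{hmbound} together with the absolute continuity of the Lebesgue integral of $|f|$ with respect to $\mu_x$ (recall $\mathcal L^1_\mu([a,b])=\mathcal L_\Delta^1([a,b])$, so $\int|f|\dif\mu_x<+\infty$).
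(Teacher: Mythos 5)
Your proposal is correct and follows essentially the same route as the paper: both reduce the claim to showing that $\mu_x(E)<\delta$ forces $\int_E|f|\dif\mu<\varepsilon$ and then apply this to $E=\bigcup_n[a_n,b_n)$. The only cosmetic difference is that the paper makes the comparison $\int_E f\dif\mu\le K\int_E f\dif\mu_x$ explicit via the bounds on the Radon--Nikod\'ym densities from (H4, iii), whereas you invoke the $\varepsilon$--$\delta$ characterization of $\nu\ll\mu_x$ for finite measures as a black box (and correctly note the explicit comparison as the alternative).
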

\begin{proof}
It is enough to consider the case $f\ge 0$, as the general case can be expressed as a difference of two functions of this type. 

Fix $\varepsilon>0$ and $x\in\bR$. Hypothesis (H4, iii) implies that there exists $K>0$ such that $|\gamma(t,x)|<K$ for all $t\in[a,b]$. Since $f\in\mathcal L_{\mu}^1([a,b])=\mathcal L_{\Delta}^1([a,b]),$ there exists $\delta>0$ such that if $E\in\mathcal M$, $\mu_x(E)<\delta$, then $\int_E f\dif\mu_x<\varepsilon/K.$ Now, noting that~\eqref{hbound} holds $\Delta$--almost everywhere, it holds $\mu$--almost everywhere and so
\[\int_E f(s)\dif\mu_x(s)=\int_Ef(s)h_{x,\a(s)}(s)\dif\mu(s)\ge\int_E \frac{f(s)}{\gamma(\a(s),x)}\dif\mu(s)>\frac{1}{K}\int_Ef(s)\dif\mu(s).\]
Thus, if $E\in\mathcal M$, $\mu_x(E)<\delta$, then $\int_E f\dif\mu<\varepsilon.$
 Consider $\{(a_n,b_n)\}_{n=1}^m$ intervals in the conditions of the definition of $\Delta_x $--absolute continuity. Take $E=\cup [a_n,b_n).$ Then
\begin{displaymath}\mu_x(E)=\sum_{n=1}^m \mu_x([a_n,b_n))=\sum_{n=1}^m (\Delta_x (b_n^-)-\Delta_x (a_n^-))=\sum_{n=1}^m (\Delta_x (b_n)-\Delta_x (a_n))<\delta,\end{displaymath}
implies that
\begin{displaymath}\sum_{n=1}^m |F(b_n)-F(a_n)|=\sum_{n=1}^m (F(b_n)-F(a_n))=\sum_{n=1}^m \int_{[a_n,b_n)}f\dif\mu=\int_Ef \dif\mu<\varepsilon.\end{displaymath}
\end{proof}
\begin{thm}\label{FTC2}
	A function $F:[a,b]\to\bR$ is $\Delta$--absolutely continuous on $[a,b]$ if and only if the following conditions are fulfilled:
	\begin{enumerate}
		\item[\textup{(i)}] there exists $F^{\Delta}$ for $\mu$--a.a. $x\in[a,b]$;
		\item[\textup{(ii)}] $F^{\Delta}\in \mathcal L^1_{\mu}([a,b))$;
		\item[\textup{(iii)}] for each $x\in[a,b]$, 
		\[F(x)=F(a)+\int_{[a,x)}F^{\Delta}\dif\mu
		.\]
	\end{enumerate}
\end{thm}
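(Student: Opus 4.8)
The plan is to prove the two implications separately. The backward direction (sufficiency of (i)--(iii)) is essentially a corollary of Lemma~\ref{intabs}; the forward direction (necessity) reduces, via a Radon--Nikod\'ym argument, to the first Fundamental Theorem of Calculus, Theorem~\ref{FTC}.

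For sufficiency, assume (i)--(iii). By (ii), $F^{\Delta}\in\mathcal L^1_{\mu}([a,b))$, so Lemma~\ref{intabs} tells us that $x\mapsto\int_{[a,x)}F^{\Delta}\dif\mu$ is $\Delta$--absolutely continuous on $[a,b]$. By (iii), $F$ differs from this function by the additive constant $F(a)$, and adding a constant manifestly preserves the $\varepsilon$--$\delta$ condition of Definition~\ref{abscont}. Hence $F$ is $\Delta$--absolutely continuous. This half needs no new work.

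For necessity, assume $F$ is $\Delta$--absolutely continuous. By the propositions immediately preceding the statement, $F$ is of bounded variation and left--continuous everywhere, so we may fix a decomposition $F=F_1-F_2$ with $F_1,F_2$ nondecreasing and left--continuous; let $\mu_1,\mu_2$ be the associated (finite) Lebesgue--Stieltjes measures on $\mathcal B([a,b])$ and $\mu_F=\mu_1-\mu_2$ the associated finite signed measure. Lemma~\ref{fabs} gives $\mu_F\ll\mu$, so the Radon--Nikod\'ym Theorem yields $g\in\mathcal L^1_{\mu}([a,b))$ with $\mu_F(E)=\int_E g\dif\mu$ for every $E\in\mathcal B([a,b])$. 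Taking $E=[a,x)$ and using $\mu_i([a,x))=F_i(x)-F_i(a)$ (the defining property of Lebesgue--Stieltjes measures, analogous to~\eqref{gmeas}, together with left--continuity), we get $F(x)=F(a)+\int_{[a,x)}g\dif\mu$ for all $x\in[a,b]$. Applying Theorem~\ref{FTC} to $g$, the function $x\mapsto\int_{[a,x)}g\dif\mu$ has $\Delta$--derivative equal to $g$ for $\mu$--a.a.\ $x$; since the $\Delta$--derivative of the constant $F(a)$ is $0$ at every $x\in[a,b]\setminus O_{\Delta}$ (there $\Delta(x,y)\ne 0$ for $y\ne x$, by Remark~\ref{nullm}), we conclude that $F^{\Delta}$ exists $\mu$--a.e.\ with $F^{\Delta}=g$ $\mu$--a.e. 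This gives (i); (ii) follows as $g\in\mathcal L^1_{\mu}([a,b))$; and (iii) follows from the displayed representation upon replacing $g$ by $F^{\Delta}$.

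The main obstacle, entirely in the necessity direction, is measure--theoretic bookkeeping rather than a conceptual hurdle: one must ensure the signed measure $\mu_F$ is well defined (fixing a single decomposition, e.g.\ via the total variation, suffices), that it is finite so that its Radon--Nikod\'ym derivative genuinely lies in $\mathcal L^1_{\mu}([a,b))$ and not merely in a local class, and that the step from $\mu_F([a,x))$ to $F(x)-F(a)$ is legitimate --- which is exactly where left--continuity of $F$ and the interpretation of $\mu$ as a Lebesgue--Stieltjes measure are used. Once the identity $F=F(a)+\int_{[a,\cdot)}g\dif\mu$ is secured, the remainder is a direct appeal to Theorem~\ref{FTC} and Lemma~\ref{intabs}.
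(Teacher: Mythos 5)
Your proposal is correct and follows essentially the same route as the paper: sufficiency via Lemma~\ref{intabs}, and necessity via the signed measure $\mu_F$, Lemma~\ref{fabs}, the Radon--Nikod\'ym Theorem, and Theorem~\ref{FTC}. The additional bookkeeping you flag (well-definedness and finiteness of $\mu_F$, the identification $\mu_F([a,x))=F(x)-F(a)$) is exactly the detail the paper leaves implicit, so nothing is missing.
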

\begin{proof}
	Lemma~\ref{intabs} ensures that the three conditions are sufficient for $F$ to be $\Delta$--absolutely continuous. For the converse, consider $\mu_F$ to be the Lebesgue--Stieltjes measure defined by $F$ and let $z\in[a,b]$ be fixed. Lemma~\ref{fabs} and the Radon--Nykodym Theorem guarantee that there exists a measurable function $l:([a,b),\cB)\to(\bR,\cB)$ such that
	\[\mu_F(E)=\int_{E}l\dif\mu,\quad \mbox{for any Borel set }E\subset[a,b).\]
	In particular, 
	\[F(x)-F(a)=\mu_F([a,x))=\int_{[a,x)}l(s)\dif\mu(s).\]
	Theorem~\ref{FTC} ensures that $F^{\Delta}(s)=l(s)$ for $\mu$--a.a. $s\in[a,b)$, and so the result follows.
\end{proof}


\section{The relation with Stieltjes derivatives}\label{Stieltjes}
As commented before, Stieltjes derivatives are, in a first approach, a particular case of $\Delta$--derivatives. However, there can be proven to be equivalent to the displacement derivatives if hypotheses (H1)--(H5) hold. Indeed, we shall prove this equivalence through following results.

\begin{pro}\label{equivgdelt1}
	Let $\Delta:[a,b]^2\to\mathbb R$ satisfy \textup{(H1)--(H5)} and $g:[a,b]\to\mathbb R$ be as in~\eqref{gfunc}. Then,
	\[
	\lim_{s\to t}\frac{g(s)-g(t)}{\Delta(t,s)}=1,\quad\mbox{for all }t\in [a,b]\setminus C_\Delta.
	\]
\end{pro}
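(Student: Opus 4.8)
The plan is to identify $g$ with a Stieltjes primitive whose base point is adapted to $t$, and then to run an ``average converges to the value at the centre'' argument. First I would fix $t\in[a,b]\setminus C_\Delta$ and exploit the independence of the $\Delta$--measure of the base point to write, taking $z=t$,
\[
g(s)=\mu([a,s))=\int_{[a,s)}h_{r,t}(r)\dif\mu_t(r),\qquad s\in[a,b],
\]
with $\mu_t$ the Lebesgue--Stieltjes measure of $\Delta_t$. Subtracting, using $\Delta_t(t)=0$ (by \textup{(H1)}) and the left--continuity of $\Delta_t$, and keeping track of signs, this gives, for $s\neq t$ with $\Delta(t,s)\neq 0$,
\[
\frac{g(s)-g(t)}{\Delta(t,s)}=\frac{1}{\mu_t(I_s)}\int_{I_s}h_{r,t}(r)\dif\mu_t(r),
\]
where $I_s$ denotes the half--open interval with endpoints $t$ and $s$ and $\mu_t(I_s)=|\Delta(t,s)|>0$. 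Thus the quotient is nothing but the $\mu_t$--mean of the ``diagonal'' map $r\mapsto h_{r,t}(r)$ over an interval collapsing to $t$.

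The key step is then to prove that this diagonal map is continuous at $t$ with value $1$. Indeed $h_{t,t}(t)=1$, and substituting $z=r$, $\ol z=t$ into the two--sided estimate~\eqref{hbound} and evaluating at the point $r$ yields
\[
\frac{1}{\gamma(t,r)}\le h_{r,t}(r)\le\gamma(r,t),\qquad r\in[a,b];
\]
since \textup{(H4, ii)} gives $\gamma(r,t)\to 1$ and $\gamma(t,r)\to 1$ as $r\to t$, a squeeze yields $h_{r,t}(r)\to 1$. Given this, the conclusion is routine: for $\e>0$ choose $\d>0$ with $|h_{r,t}(r)-1|<\e$ on $(t-\d,t+\d)\cap[a,b]$; then $0<|s-t|<\d$ and $\Delta(t,s)\neq 0$ force $I_s\subset(t-\d,t+\d)$, so
\[
\left|\frac{g(s)-g(t)}{\Delta(t,s)}-1\right|\le\frac{1}{\mu_t(I_s)}\int_{I_s}|h_{r,t}(r)-1|\dif\mu_t(r)\le\e .
\]
Equivalently, since $g$ is the primitive of the integrand $r\mapsto h_{r,t}(r)$ with respect to $\Delta_t$ and this integrand is continuous at $t$, one could simply quote the Fundamental Theorem of Calculus for Stieltjes derivatives, exactly as in Case~1 of the proof of Theorem~\ref{FTC}.

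I do not expect a serious obstacle here; the whole argument is short once one notices that $g$ is the $\Delta_t$--primitive of the density $r\mapsto h_{r,t}(r)$ and that this density is continuous at $t$ with value $1$. The only point requiring care is the domain over which the limit is taken: for $t\notin C_\Delta$ the map $\Delta_t$ cannot vanish on a whole neighbourhood of $t$, so (using \textup{(H3)} and $\Delta_t(t)=0$) $\Delta(t,s)\neq 0$ on a deleted neighbourhood of $t$ on at least one side, and the limit is understood over such $s$, in accordance with the convention in Definition~\ref{deriv} (two--sided off $D_\Delta$, one--sided on $D_\Delta$). In particular the case $t\in D_\Delta$ costs nothing: as $s\to t^+$ both the numerator and the denominator in the displayed identity converge to the mass $\Delta_t(\{t\})=\Delta(t,t^+)\neq 0$, and the ratio is forced to $1$ because $h_{t,t}(t)=1$. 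A minor technical nuisance is that $h_{r,t}(r)$ is the value at a single point of a Radon--Nikod\'ym density; this is harmless, since~\eqref{hbound} provides a representative of each $h_{r,t}$ valid at every point of $[a,b]$, and the measurability of $r\mapsto h_{r,t}(r)$ is precisely what was established just before Definition~\ref{dmeasure}.
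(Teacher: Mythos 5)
Your proposal is correct and follows essentially the same route as the paper: both express $g(s)-g(t)$ as $\pm\int_{I_s}h_{r,t}(r)\dif\mu_t(r)$ with $\mu_t(I_s)=|\Delta(t,s)|$, bound the density via $1/\gamma(t,r)\le h_{r,t}(r)\le\gamma(r,t)$, and squeeze using (H4, ii) as the interval collapses to $t$ (the paper phrases the squeeze with $\sup_{r\in[t,s)}\gamma$ rather than your $\e$--$\delta$ average argument, but this is the same estimate). Your explicit remarks on the sign bookkeeping, the case $t\in D_\Delta$, and the one-sided interpretation when $\Delta_t$ vanishes on one side match the paper's case analysis based on~\eqref{conddelta1}.
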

\begin{proof}
	Fix $t\in[a,b]\setminus C_\Delta$. 
	It follows from~\eqref{hboundz} that
	$1/\gamma(t,r)\le h_{r,t}(r)\le \gamma(r,t)$ for all $r\in[a,b]$.
	As a consequence, we have that
	\begin{equation}\label{hineq1}
	\frac{1}{\displaystyle\sup_{r\in[t,s)}\gamma(t,r)}\le h_{r,t}(r)\le \sup_{r\in[t,s)}\gamma(r,t).
	\end{equation}

	Since $t\not\in C_\Delta$ we have that 
	\begin{equation}\label{conddelta1}
	\Delta_x(y)>0\ \ \mbox{ for all }y>x,\quad\mbox{and/or}\quad \Delta_x(y)<0\ \ \mbox{ for all }y<x.
	\end{equation}

	Assume the first case in~\eqref{conddelta1} holds. Hence, for any $s\in[a,b]$, $s>t$, we have that
	\begin{equation*}\label{gdeltalim1}
	\frac{g(s)-g(t)}{\Delta(t,s)}=\frac{\int_{[a,s)} h_{r,t}(r)\dif\mu_t-\int_{[a,t)}h_{r,t}(r)\dif\mu_t}{\Delta(t,s)}=\frac{\int_{[t,s)}h_{r,t}(r)\dif\mu_t}{\Delta(t,s)}.
	\end{equation*}
	Now, for any $s>t$ it follows from~\eqref{hineq1}, that
	\[
	\frac{1}{\displaystyle\sup_{r\in[t,s)}\gamma(t,r)}\Delta(t,s)
	\le \int_{[s,t)}h_{r,t}(r)\dif\mu_t
	\le 
	\sup_{r\in[t,s)}\gamma(r,t)\Delta(t,s),
	\]
	since $\mu_t([t,s))=\Delta(t,s)$. Equivalently,
	\[\frac{1}{\displaystyle\sup_{r\in[t,s)}\gamma(t,r)}
	\le \frac{\int_{[t,s)}h_{r,t}(r)\dif\mu_t}{\Delta(t,s)}=\frac{g(s)-g(t)}{\Delta(t,s)}
	\le 
	\sup_{r\in[t,s)}\gamma(r,t),\quad s>t,\]
	and so, allowing $s\to t^+$, we obtain
	\[
	1=\lim_{s\to t^+}\frac{1}{\displaystyle\sup_{r\in[t,s)}\gamma(t,r)}
	\le \lim_{s\to t^+}\frac{g(s)-g(t)}{\Delta(t,s)}
	\le 
	\lim_{s\to t^+}\sup_{r\in[t,s)}\gamma(r,t)=1.\]
	If $\Delta_x(\cdot)=0$ on some $[x-\delta,x]$, $\delta >0$, then the proof is complete. 
	Otherwise, the second condition in~\eqref{conddelta1} holds. In that case we have that for any $s\in[a,b]$, $s<t$,
	\begin{equation}\label{gdeltalim2}
	\frac{g(s)-g(t)}{\Delta(t,s)}=\frac{\int_{[a,s)} h_{r,t}(r)\dif\mu_t-\int_{[a,t)}h_{r,t}(r)\dif\mu_t}{\Delta(t,s)}=-\frac{\int_{[s,t)}h_{r,t}(r)\dif\mu_t}{\Delta(t,s)}.
	\end{equation}
	Once again, it follows from~\eqref{hineq1}, that for any $s<t$,
	\[
	-\sup_{r\in[t,s)}\gamma(r,t)(-\Delta(t,s))
	\le -\int_{[s,t)}h_{r,t}(r)\dif\mu_t
	\le -\frac{1}{\displaystyle\sup_{r\in[t,s)}\gamma(t,r)}(-\Delta(t,s)) ,
	\]
	since $\mu_t([s,t))=-\Delta(t,s)$. Equivalently,
	\[\sup_{r\in[t,s)}\gamma(r,t)
	\le -\frac{\int_{[s,t)}h_{r,t}(r)\dif\mu_t}{\Delta(t,s)}=\frac{g(s)-g(t)}{\Delta(t,s)}
	\le 
	\frac{1}{\displaystyle\sup_{r\in[t,s)}\gamma(t,r)},\quad s<t.\]
	Now the rest of the proof is analogous to the previous case, and we omit it.
\end{proof}

Bearing in mind the following relation,
\[\lim_{s\to t}\frac{f(s)-f(t)}{\Delta(t,s)}=\lim_{s\to t}\frac{f(s)-f(t)}{g(s)-g(t)}\frac{g(s)-g(t)}{\Delta(t,s)},\quad\mbox{for all }t\in [a,b]\setminus C_\Delta,\] 
we obtain the next result.

\begin{thm}\label{thmeq}
	Let $\Delta:[a,b]^2\to\mathbb R$ satisfy \textup{(H1)--(H5)} and $g:[a,b]\to\mathbb R$ be given by~\eqref{gfunc}. Given $f:[a,b]\to\mathbb R$ and $t\in [a,b]\setminus O_\Delta$, $f^\Delta(t)$ exists if and only if $f'_g(t)$ exists.
\end{thm}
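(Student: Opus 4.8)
The plan is to reduce everything to Proposition~\ref{equivgdelt1}. First I would observe that the two definitions of derivative split into their two cases at exactly the same points: by Proposition~\ref{cdrel} we have $D_\Delta=D_g$ for the function $g$ of~\eqref{gfunc}, so $g$ is continuous at $t$ if and only if $t\notin D_\Delta$. Thus for $t\in[a,b]\setminus O_\Delta$ the limit defining $f^\Delta(t)$ and the limit defining $f'_g(t)$ are taken over the same neighbourhood filter --- two--sided if $t\notin D_\Delta$, right--sided if $t\in D_\Delta$. Moreover $t\notin O_\Delta$ forces $t\notin C_\Delta$, so Proposition~\ref{equivgdelt1} is applicable at $t$.

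Next I would check that all the difference quotients below have nonvanishing denominators near $t$. Since $t\notin O_\Delta$, Remark~\ref{nullm} gives $\Delta(t,s)=\Delta_t(s)\neq\Delta_t(t)=0$ for every $s\neq t$. For $g$, writing $g(s)-g(t)=\mu([t,s))=\int_{[t,s)}h_{r,t}(r)\dif\mu_t(r)$ when $s>t$, and $g(t)-g(s)=\int_{[s,t)}h_{r,t}(r)\dif\mu_t(r)$ when $s<t$, and using the lower bound $h_{r,t}(r)\ge 1/\gamma(t,r)>0$ from~\eqref{hbound} together with the fact that the $\mu_t$--measure of the half--open interval between $s$ and $t$ equals $|\Delta(t,s)|>0$, one gets $g(s)\neq g(t)$ for all $s\neq t$. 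Hence the quotients
\[\frac{f(s)-f(t)}{\Delta(t,s)},\qquad \frac{f(s)-f(t)}{g(s)-g(t)},\qquad L(s):=\frac{g(s)-g(t)}{\Delta(t,s)}\]
are all defined for $s\neq t$ near $t$, and Proposition~\ref{equivgdelt1} says $L(s)\to 1$ as $s\to t$, a fortiori as $s\to t^+$; in particular $L(s)\neq 0$ near $t$, so $1/L(s)$ is defined there and $1/L(s)\to 1$ as well.

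The conclusion is then pure limit algebra. From
\[\frac{f(s)-f(t)}{\Delta(t,s)}=\frac{f(s)-f(t)}{g(s)-g(t)}\,L(s)\quad\text{and}\quad\frac{f(s)-f(t)}{g(s)-g(t)}=\frac{f(s)-f(t)}{\Delta(t,s)}\,\frac{1}{L(s)},\]
and the fact that both $L(s)$ and $1/L(s)$ tend to $1$ in the relevant (two--sided or right) limit, I conclude that $f^\Delta(t)$ exists if and only if $f'_g(t)$ exists, and that in that case $f^\Delta(t)=f'_g(t)$.

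Since the substantive analytic content is already packaged in Proposition~\ref{equivgdelt1}, I do not expect a genuine obstacle here. The only points that demand care are the two pieces of bookkeeping described above: matching the continuous/discontinuous case distinction of the two definitions via $D_\Delta=D_g$, and checking that the hypothesis $t\notin O_\Delta$ really does rule out zero denominators for \emph{both} quotients, so that $L(s)$ and its reciprocal are legitimate and the product/quotient manipulations are valid.
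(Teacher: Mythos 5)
Your proposal is correct and follows the same route as the paper, which proves the theorem precisely by factoring the difference quotient as $\frac{f(s)-f(t)}{\Delta(t,s)}=\frac{f(s)-f(t)}{g(s)-g(t)}\cdot\frac{g(s)-g(t)}{\Delta(t,s)}$ and invoking Proposition~\ref{equivgdelt1}. In fact your write-up is more careful than the paper's one-line argument, since you explicitly verify the nonvanishing of both denominators via Remark~\ref{nullm} and the bound~\eqref{hbound}, and match the one-sided versus two-sided limits through the identity $D_\Delta=D_g$ from Proposition~\ref{cdrel}.
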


Therefore, we have that both derivatives are, indeed, equivalent. This shows the interest of studying this type of derivatives. In particular, when looking at differential equations, a wide variety of results exists in several papers such as \cite{PoRo} where the authors showed that Stieltjes differential equations are good for studying equations on time scales and impulsive differential equations, or \cite{FP2016,PoMa,LopezPouso2019,LopezPouso2019a, LopezPouso2018} where we can find different types of existence and uniqueness of solution results.

However, obtaining the corresponding function $g$ can be hard. Indeed, although~\eqref{gfunc} gives an explicit expression of the function, it is defined in terms of the $\Delta$--measure, which depends on the functions~\eqref{hzzdef} given by the Radon--Nidok\'ym Theorem. The following result gives a simple way to obtain, under certain hypotheses, the corresponding function $g$.

\begin{pro}\label{propgint}
	Let $\Delta:[a,b]^2\to\mathbb R$ satisfy \textup{(H1)--(H5)} and $g:[a,b]\to\mathbb R$ be given by~\eqref{gfunc}. Suppose that $D_2\Delta$ exists, is continuous and positive on $[a,b]^2$. Then $g'(t)$ exists for all $t\in[a,b]$, and 
		\[g'(t)=D_2\Delta(t,t).\]
		In particular, $g$ is strictly nondecreasing and $g\in\mathcal C^1([a,b])$.
\end{pro}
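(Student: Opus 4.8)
The plan is to reduce the statement to Proposition~\ref{equivgdelt1} together with the differentiability of $\Delta(t,\cdot)$ on the diagonal. The first observation is that the hypothesis $D_2\Delta>0$ on $[a,b]^2$ forces $C_\Delta=\emptyset$: for each fixed $x$ the map $\Delta(x,\cdot)$ is strictly increasing, hence never constant on a subinterval. Since $\Delta$ is assumed to satisfy (H1)--(H5), Proposition~\ref{equivgdelt1} is therefore available at \emph{every} point of $[a,b]$ and yields
\[
\lim_{s\to t}\frac{g(s)-g(t)}{\Delta(t,s)}=1,\qquad t\in[a,b],
\]
the limit being understood as one-sided at $t=a$ and $t=b$, exactly as in the proof of that proposition.

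Next I would factor the Newton quotient of $g$ as
\[
\frac{g(s)-g(t)}{s-t}=\frac{g(s)-g(t)}{\Delta(t,s)}\cdot\frac{\Delta(t,s)-\Delta(t,t)}{s-t},\qquad s\ne t.
\]
This is legitimate because (H1) gives $\Delta(t,t)=0$ while the strict monotonicity of $\Delta(t,\cdot)$ gives $\Delta(t,s)\ne0$ for all $s\ne t$, so no denominator vanishes near $t$. Letting $s\to t$, the first factor tends to $1$ by the previous display, and the second factor, being the difference quotient at $\sigma=t$ of $\sigma\mapsto\Delta(t,\sigma)$, tends to the (one-sided, at an endpoint) partial derivative $D_2\Delta(t,t)$, which exists by hypothesis. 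Hence $g'(t)$ exists and equals $D_2\Delta(t,t)$ for every $t\in[a,b]$.

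For the remaining assertions, continuity of $D_2\Delta$ on $[a,b]^2$ makes $t\mapsto D_2\Delta(t,t)$ continuous, so $g'\in\mathcal C([a,b])$ and thus $g\in\mathcal C^1([a,b])$; and $D_2\Delta(t,t)>0$ for all $t$ forces $g'>0$ on $[a,b]$, whence $g$ is strictly increasing by the mean value theorem.

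I do not anticipate a genuine obstacle. The only steps requiring care are confirming that $C_\Delta$ is empty --- so that Proposition~\ref{equivgdelt1} holds on all of $[a,b]$ rather than merely off $C_\Delta$ --- verifying that the factorization above never divides by zero (this is exactly where (H1) and the strict monotonicity coming from $D_2\Delta>0$ enter), and keeping track of the one-sided limits at the endpoints $a$ and $b$.
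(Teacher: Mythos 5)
Your proposal is correct and follows essentially the same route as the paper: both arguments note that $D_2\Delta>0$ excludes points of $C_\Delta$ so that Proposition~\ref{equivgdelt1} applies, and both factor the Newton quotient of $g$ as the product of $\frac{g(s)-g(t)}{\Delta(t,s)}$ (which tends to $1$) and $\frac{\Delta(t,s)-\Delta(t,t)}{s-t}$ (which tends to $D_2\Delta(t,t)$ by (H1) and the hypothesis). Your write-up merely adds some explicit bookkeeping (nonvanishing denominators, one-sided limits at the endpoints, the final $\mathcal C^1$ and monotonicity assertions) that the paper leaves implicit.
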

\begin{proof}
	Fix $t\in[a,b]$. Since $D_2\Delta(t,t)>0$, the function $\Delta_t$ is strictly nondecreasing in a neighborhood of $t$. Thus, $t\not\in C_\Delta$, and so, if we apply Proposition~\ref{equivgdelt1}, we have that
	\[D_2\Delta(t,t)=\lim_{s\to t}\frac{\Delta(t,s)}{s-t}=\lim_{s\to t}\frac{\Delta(t,s)}{s-t}\lim_{s\to t}\frac{g(s)-g(t)}{\Delta(t,s)}=\lim_{s\to t}\frac{g(s)-g(t)}{s-t}.\]
	Hence, $g'(t)$ exists and equals to $D_2\Delta(t,t)$. The rest of the result now follows.
\end{proof}
\begin{exa}
	Consider the non--Stieltjes displacement $\Delta:[0,1]\times[0,1]\to\mathbb R$ in Example~\ref{ExDeltanoSti}, given by
	\[\Delta(x,y)=e^{y^2-x^2}-e^{x-y}.\]
	It follows from~\eqref{nonstider} that the hypotheses of Proposition~\ref{propgint} are satisfied. Hence, we can compute $g:[0,1]\to\mathbb R$ in~\eqref{gfunc} as
	\[g(t)=\int_0^t D_2\Delta(s,s)\dif s=\int_0^t (2s+1)\dif s=t^2+t.\]
\end{exa}

\section{A model for smart surface textures}

In this section we develop a model for smart surfaces based on a displacement and a derivation similar to that of the diffusion equation.

It is every day more extended to employ biomimetics in order to develop surfaces with extraordinary properties \cite{Yuan2019}. These meta-materials imitate organic tissues with special microstructures which modify their usual behavior. For instance, a cats tongue possesses backwards-facing spines, a disposition which facilitates that particles move towards the interior of the mouth and not in the other direction. This situation is similar to the one on the human respiratory epithelium, with the difference that in this other tissue the effect is due to the active motion of the cells cilia, which move mucus and particles upwards, and not a passive result of the microstructure.

With 3D-printing (or other methods) we may obtain these surfaces for which friction depends on direction, position, pressure, etc. We can measure friction in an indirect but simple way using the definition of work: \emph{work is the energy necessary to move and object between two points against a given force field}. Thus, on our surface, which, for convenience in the present discussion, we will consider one-dimensional (the higher dimensional case would be analogous), we can define a function $W(x,y)$ which measures the work necessary to move a point mass from $x$ to $y$.

If we consider now a distribution of particles on the surface subject to random vibrations, this kind of situation may be described as a Brownian motion but, in the case those particles are very small, this model may be approximated by a diffusion process. The derivation of the classical diffusion process --that is, of Fick's second law or, equivalently, the heat equation-- can be found in many references --see for instance \cite{Kirkwood2018,Tikhonov1990,Haberman1998}.

By Nernst's law, in the friction-less setting, the mass flowing through the point $x$ during the time interval $(t,t+\dif t)$ is equal to
\[Q=-D\frac{\Delta u(x,t)}{\dif x}\dif t,\] where $u$ denotes the concentration of mass, $\Delta u$ the spatial variation of $u$, $D$ is the diffusion coefficient and $\dif x$, $\dif t$ are considered to be \emph{infinitesimal quantities}. With friction, this variation of the mass flow is impeded by the work necessary to move the particles, that is, $W$. We will assume that the variation of the mass flow is inversely proportional to the spatial variation of this work, that is,
\[Q=-D\frac{\Delta u(x,t)}{W(x,x+\dif x)}\dif t.\]
On the other hand, the spatial variation of $Q$ can be computed directly as
\[\Delta Q=Q(x,t)-Q(x+\dif x,t)+h(x)\dif x\dif t,\]
where $h$ is a source term in the case we allow for a continuous inflow of particles. At the same time, the variation in the mass flowing through the sectional volume, being proportional to the concentration of mass, can be computed as
\[\Delta Q=\left[-D\frac{\Delta u(x,t)}{W(x,x+\dif x)}+D\frac{\Delta u(x+\dif x,t)}{W(x+\dif x,x+2\dif x)}+h(x)\dif x\right]\dif t=c\Delta u(x,t)\dif x.\] for some constant $c$.
In the limit $\dif x\to 0$,
\[\frac{\Delta u(x,t)}{W(x,x+\dif x)}\approx \partial_{x,W}\Delta u(x,t),\quad \frac{\Delta u(x+\dif x,t)}{W(x+\dif x,x+2\dif x)}\approx \partial_{x,W}\Delta u(x+\dif x,t).\]
where $\partial_{x,W}$ denotes the displacement derivative for the displacement $W$ with respect to the variable $x$.
Thus,
\[\left[-D\partial_{x,W}\Delta u(x,t)+D\partial_{x,W}\Delta u(x+\dif x,t)+h(x)\dif x\right]\dif t=c\Delta u(x,t)\dif x,\]
that is,
\[D\left[\frac{\partial_{x,W}\Delta u(x+\dif x,t)-\partial_{x,W}\Delta u(x,t)}{\dif x}\right]+h(x) =c\frac{\Delta u(x,t)}{\dif t}.\]
Again, in the limit,
\[D\partial_x\partial_{x,W} u(x,t)+h(x)=c\partial_t u(x,t).\]

Now we consider, for instance, the stationary problem with mixed two-point boundary conditions
\begin{equation}\label{2pb} u'_W\,'(x)+h(x)=0,\quad u'_W(0)=0,\ u(1)=C.\end{equation}
where $C$ is a real constant. Integrating, we obtain the problem
\begin{equation}\label{2pb2} u'_W(x)=-\int_0^xh(y)\dif y,\quad u(1)=C.\end{equation}

 Under the conditions of Theorem~\eqref{thmeq} --that is, (H1)--(H5)-- we can apply theorems such as \cite[Theorem~3.5]{PoMa} to derive the existence of solution of problem~\eqref{2pb2}.

\section{Conclusions}

In the work behind we have established a theory of Calculus based on the concept of displacement. We have studied the associated topology and measure and proved some general results regarding their interaction. We have also defined new concepts such as the integral with respect to a path of measures or the displacement derivative, studied their properties and proved a Fundamental Theorem of Calculus that relates them. Finally, we have set up a framework in order to study displacement equations. We have proved they can be transformed into Stieltjes differential equations and so the results in \cite{FP2016,PoMa} can be applied to new mathematical models.

We have also left some open problems in our way. First of all, conditions (H1) and (H2) may be weakened further, of substituted by a different set of axioms in order to achieve the same results. It will also be interesting to analyze how these relate to other concepts that generalize the notion of metric space, such as those derived from the conditions in \cite{Lorenz,Roldan}, and to further explore the topological properties of displacement spaces.

To explore how to weaken conditions (H3)--(H5) would be an even more important task. Although very general in nature, they bound displacements to Stieltjes derivatives in a stringent way, such as is shown in Proposition~\ref{plc}. In the same way, it would be interesting to generalize the theory of displacement derivatives to the case where a general displacement is also consider in the numerator. We believe that the natural way to define the derivative in that case is as presented in the following definition.
\begin{dfn}\label{deriv2} 
	Let $([a,b],\Delta_1)$ satisfy \textup{(H1)--(H5)} and $(\bR,\Delta_2)$ satisfy \textup{(H1)--(H3)}.
	The \emph{derivative with respect to the pair of displacements $(\Delta_1,\Delta_2)$} (or \emph{$\Delta_1^2$--derivative}) of a function $f:[a,b]\to \bR$ at a point $x\in [a,b]\setminus O_{\Delta_1}$ is defined as follows, provided that the corresponding limits exist:
	 \begin{align*} f^{\Delta_1^2}(x) =\begin{dcases}\lim_{y \to x}\frac{\Delta_2(f(x),f(y))}{\Delta_1(x,y)}, & x\not\in D_{\Delta_1},\\
 \lim_{y \to x^+}\frac{\Delta_2(f(x),f(y))}{\Delta_1(x,y)}, & x\in D_{\Delta_1}.\end{dcases}
 \end{align*}
\end{dfn} 
Again, this definition would establish a more general setting than Stieltjes derivatives, but would also include absolute derivatives and some very well known operators, such as the $\phi$--Laplacian~\cite{Cabada2014b}. If we consider two functions $f,\phi:\bR\to\bR$, the $\phi$--Laplacian of $y$ is given by
\[(\phi\circ f')'=(f')^{\Delta_u^2}=(f^{\Delta_u^u})^{\Delta_u^2},\]
where $\Delta_2(x,y)=\phi(y)-\phi( x)$ for $x,y\in\bR$.
Note, however, that since displacements need not to be linear in any sense, this definition could make it more difficult to prove a Fundamental Theorem of Calculus.

We have also hinted at the possibility of developing a theory non symmetric length spaces, based on the concept of displacement, that generalizes the results in \cite{Burago}. Furthermore, fixed point theorems in both displacement spaces and vector displacement spaces should be studied, focusing, in the last case, on the compatibility of the displacement with the underlying order topology (cf. \cite{Nieto,Roldan}). 


Last, the properties of the integral with respect to a path of measures have to be thoroughly studied, as this concept lays many possibilities ahead.

\end{document}